\theoremstyle{plain}
\newtheorem{theorem}{Theorem}[section]
\newtheorem{proposition}[theorem]{Proposition}
\theoremstyle{definition}
\newtheorem{definition}[theorem]{Definition}
\theoremstyle{remark}
\newtheorem{remark}[theorem]{Remark}
\newtheorem{assumption}{Assumption}
\begin{document}
\definecolor{dred}{rgb}{0.8,0.2,0.0}
\newcommand{\blue}[1]{\begin{color}{blue}#1\end{color}}
\newcommand{\magenta}[1]{\begin{color}{magenta}#1\end{color}}
\newcommand{\red}[1]{\begin{color}{red}#1\end{color}}
\newcommand{\green}[1]{\begin{color}{green}#1\end{color}}
\newcommand{\dred}[1]{\begin{color}{dred}#1\end{color}}
\title{{Multiple Regression} for Matrix and Vector Predictors: Models, Theory, Algorithms, and Beyond}

\author{
  Meixia Lin\footnote{Engineering Systems and Design, Singapore University of Technology and Design, Singapore. Email: \href{mailto:meixia_lin@sutd.edu.sg}{meixia\_lin@sutd.edu.sg}} \and
  Ziyang Zeng\footnote{Department of Mathematics, National University of Singapore, Singapore. Email: \href{mailto:ziyangzeng@u.nus.edu}{ziyangzeng@u.nus.edu}} \and
  Yangjing Zhang\footnote{Institute of Applied Mathematics, Academy of Mathematics and Systems Science, Chinese Academy of Sciences, Beijing, China. Email: \href{mailto:yangjing.zhang@amss.ac.cn}{yangjing.zhang@amss.ac.cn}}
}

\maketitle
\begin{abstract}
Matrix regression plays an important role in modern data analysis due to its ability to handle complex relationships involving both matrix and vector variables.
We propose a class of regularized regression models capable of predicting both matrix and vector variables, accommodating various regularization techniques tailored to the inherent structures of the data. We establish the consistency of our
estimator when penalizing the nuclear norm of the matrix variable and the $\ell_1$ norm of the vector variable. To tackle the general regularized regression model, we propose
a unified framework based on
an efficient preconditioned proximal point algorithm. Numerical experiments demonstrate the superior estimation and prediction accuracy of our proposed estimator, as well as
the efficiency of our algorithm compared to the state-of-the-art solvers.

\end{abstract}




\section{Introduction}\label{sec:intro}
In the modern big data era, {it is commonplace to encounter data sets comprising both matrix and vector variate observations in various fields such as neuroimaging, transportation, and longitudinal studies, where data often exhibits multi-dimensional structures.} For example, the recent COVID-19 pandemic, which has led to significant global morbidity and mortality, underscores the importance of developing predictive tools for managing infectious diseases. The COVID-19 Open Data \citep{COVID} offers a rich repository of daily time-series data on COVID-19 cases, deaths, recoveries, testing, vaccinations, {as well as} hospitalizations, spanning over 230 countries, 760 regions, and 12,000 lower-level administrative divisions. This data set provides a unique opportunity to develop predictive models that can forecast the spread of the virus. By considering the total number of confirmed cases in each region over a specific period as the response variable, researchers can naturally employ {the} time-series data to construct a matrix predictor. Simultaneously, other variables with minimal variation during the target period, such as certain health-related characteristics, can serve as vector-valued predictors.  In addition to the COVID-19 data, other examples involving both matrix and vector variables include the bike sharing data \citep{7959977}, and the electroencephalography (EEG) data applied in diverse areas like predicting alcoholism \citep{misc_eeg_database_121} and the emotion recognition \citep{Onton2009High}. To handle such data structures, \citet{zhou2014regularized} proposed a matrix regression model
\begin{equation}
y_i = \langle X_i,B\rangle + \langle z_i,\gamma\rangle + \varepsilon_i,\quad i = 1,\cdots, n,\label{model}
\end{equation}
{where $\{(y_i,X_i,z_i),\,i=1,\dots,n\}$ are the observed data. Here} $y_i\in \mathbb{R}$ is the response, $X_i\in \mathbb{R}^{m\times q}$ is the matrix variate, $z_i\in \mathbb{R}^p$ is the vector variate, $B\in \mathbb{R}^{m\times q}$, $\gamma\in \mathbb{R}^p$ are the unknown regression coefficients, {and $\varepsilon_i$'s are independent and identically distributed random noise, each following} a normal distribution with mean $0$ and standard deviation $\sigma$.

Given the formulation of model \eqref{model} and the high dimensionality encountered in various applications, it is often presumed that the coefficients of the models
exhibit some special structures such as sparsity and low-rankness.
For instance, data sets like the COVID-19 Open Data, typically contain multiple features with longitudinal variability and the features that are relatively constant.
{See also \cite{fang2021matrix,fang2022regularized,hung2013matrix}, which assume a rank-$1$ canonical polyadic decomposition structure for the matrix variate.}
To address this challenge, we propose a class of regularized regression models capable of predicting structured matrix and vector variables. 
Given samples $\{(y_i,X_i,z_i),\,i=1,\dots,n\}$, we estimate the unknown regression coefficients $B$ and $\gamma$ through solving the following regularized {optimization} problem
\begin{align}
\min_{B\in \mathbb{R}^{m\times q}, \gamma \in \mathbb{R}^p} \  \sum_{i=1}^n
\ell(\langle X_i,B\rangle + \langle z_i,\gamma\rangle,y_i) + \phi(B) + \psi(\gamma), \label{eq: porg}
\end{align}
{where} $\ell:\mathbb{R}\times \mathbb{R}\rightarrow \mathbb{R}$ is a loss function, commonly chosen as  the
squared loss function {$\ell(s_i,y_i) =  (y_i - s_i)^2$. Here} $\phi:\mathbb{R}^{m\times q}\rightarrow (-\infty,\infty]$ {and} $\psi:\mathbb{R}^{p}\rightarrow (-\infty,\infty]$ are given closed proper convex functions, which serve as {the} penalty functions (also known as regularizers) encouraging special structures in $B$ and $\gamma$. Regularization is important due to the potential high dimensionality and intricate structure of the matrix and vector {predictors}. {Commonly used} penalty functions of vectors include the lasso \citep{tibshirani1996regression}, the fused lasso \citep{fusedlasso}, the (sparse) group lasso \citep{yuan2006model,friedman2010note}, and many others. For the matrix variate, the true predictor {often exhibits a low-rank structure or can be well approximated by one.} As such, a typical choice of the penalty function is the nuclear norm \citep{ma2011fixed,Toh2009AnAP}. There are also other penalty functions of matrices such as multivariate (sparse) group lasso \citep{obozinski2011support,li2015multivariate} and matrix-type fused lasso \citep{li2019double}. 

In the literature, various regularized regression models based on \eqref{model} have {also} been {studied}. {Initially, \citet{zhou2014regularized} considered model \eqref{eq: p} with  $\psi(\cdot) = 0$ and $\phi(\cdot)$ being spectral regularization, such as $\phi(B) = \|B\|_*$. In other words, their focus was primarily on estimating the matrix covariate without applying penalties to the vector part. Similarly, \citet{elsener2017robust} considered model \eqref{eq: p} with matrix variate only by assuming $\gamma = 0$, and they used the absolute value loss
and the Huber loss, together with a nuclear norm penalty $\phi(B) = \|B\|_*$.}
Later, \citet{li2019double} introduced matrix regression with a double fused lasso penalized least absolute deviation (LAD). That is, $\ell(s_i,y_i)=|y_i-s_i|$, with matrix-type fused lasso penalty  $\phi(B)=\lambda_1{\|B\|_*}+\lambda_2\sum_{j=2}^m\|B_{j\cdot}-B_{(j-1)\cdot}\|_1$ and vector-type fused lasso penalty  $\psi(\gamma)=\lambda_3\|\gamma\|_1+\lambda_4\sum_{j=2}^p{|\gamma_{j}-\gamma_{j-1}|}$.
Subsequently, \citet{Li2020linearized,li2021double} proposed double fused lasso regularized linear
and logistic regression.
All these models serve as specific cases of  \eqref{eq: porg}. In addition, our model \eqref{eq: p} accommodates both vector and matrix coefficients simultaneously and simplifies to the regularized matrix-variate (or vector-variate) regression model \citep{cui2024variable,elsener2017robust,10.1214/20-AOS1980,tibshirani1996regression,fusedlasso} under appropriate settings, illustrating its versatility and adaptability to both simpler and more complex scenarios involving various loss and penalty functions. As an increasing number of problems align with this framework, it becomes imperative to develop efficient algorithms to solve it.

Let {${\rm vec}(\cdot)$} be the operator stacking the columns of a matrix on top of one another into a vector. Denote $y = (y_1,\dots,y_n)^{\intercal}\in \mathbb{R}^n$, $Z = (z_1, \dots, z_n)^{\intercal} \in \mathbb{R}^{n\times p}$, and $X = ( {\rm vec}(X_1), \dots, {\rm vec}(X_n))^{\intercal}$\\$ \in \mathbb{R}^{n\times mq}$. Problem \eqref{eq: porg} can be written in a compact and general form:
\begin{equation}
\min_{B\in \mathbb{R}^{m\times q}, \gamma \in \mathbb{R}^p} \ \Big\{
G(B,\gamma):=  \hat{\ell} (X {\rm vec}(B) + Z \gamma, y) + \phi(B) + \psi(\gamma)
\Big\},\label{eq: p}
\end{equation}
where $\hat{\ell} :\mathbb{R}^{n} \times \mathbb{R}^{n} \rightarrow \mathbb{R}$ is a loss function. In particular, problem \eqref{eq: p} reduces to \eqref{eq: porg} if we take $\hat{\ell}(s,y) =   \sum_{i=1}^n \ell(s_i,y_i)$, for $s,y\in \mathbb{R}^{n}$. More generally, problem \eqref{eq: p} can also incorporate the square root loss function $\hat{\ell}(s,y) = \sqrt{ {\sum_{i=1}^n (y_i-s_i)^2/n}}$, which is not covered in {the form of} problem \eqref{eq: porg}. Throughout the paper, we make the blanket assumption that {$\hat{\ell}(\cdot,y)$ is a convex function for any $y\in \mathbb{R}^n$.}

In current research, {the} state-of-the-art first-order methods, including those based on Nesterov algorithm and the alternating direction method of multipliers (ADMM), are prevalent in previously discussed matrix regression {models}, with various adaptations for different regularization strategies. Specifically, \citet{zhou2014regularized} implemented the Nesterov algorithm for matrix regression problems, focusing on the nuclear norm regularizer. Later,  \citet{li2019double} introduced a symmetric Gauss-Seidel based ADMM (sGS-ADMM) to tackle matrix regression problems with a double fused lasso penalized LAD. Subsequently, \citet{li2021double} extended this approach to address double fused lasso regularized linear and logistic regression problems. Additionally, a linearized ADMM \citep{Li2020linearized} was proposed for matrix regression problems incorporating both fused lasso and nuclear norm penalties. \citet{Fan2019Generalized,10.1214/20-AOS1980} also implemented {a} contractive Peaceman-Rachford splitting method to solve their matrix optimization problem. Despite their effectiveness, these first-order methods suffer from
{slow convergence}, especially in the context of the large-scale data.
{Additionally, they often only leverage}
first-order information
{in the}
underlying non-smooth optimization model. In contrast, in recent study on vector-variate regression with lasso-type penalties \citep{li2017highly,li2018fused,Zhang_2018,luo2018solving,lin2019clustered,lin2023highly} and matrix-variate problem with nuclear or spectral norm  penalties \citep{Jiang2013Solving,jiang2014partial,Chen2014semismooth}, researchers have designed the semismooth Newton (SSN) based algorithms, which fully take advantage of {the} second-order {information} of the problem, to solve the optimization problem efficiently. These appealing evidences
{suggest the potential for designing a significantly}
more efficient algorithm {that} wisely exploits the inherent second-order {structured} sparsity of vector and special structure of matrix in the matrix regression problems.
{However,}
no research has yet leveraged this valuable insight to {address} problems involving both matrix and vector variables simultaneously. Thus, we aim to design a highly efficient second-order type, dual Newton method based proximal point algorithm (PPDNA) for solving the dual problem of the matrix regression problem \eqref{eq: p} with both matrix and vector variables.

In this paper, we propose a class of regularized regression models {in the form of} \eqref{eq: p} to estimate the matrix and vector predictors simultaneously. Our contributions are {threefold}. First, {our model accommodates a range of general}
convex penalty functions for $\phi$ and $\psi$, including {the nuclear norm for $\phi$, as well as the lasso, the fused lasso, and the group lasso for $\psi$.}
Second, we establish the $n$-consistency and $\sqrt{n}$-consistency of the estimator from \eqref{eq: p} with $\phi$ defined by the nuclear norm and $\psi$ defined by the $\ell_1$ norm. Third, we propose an efficient preconditioned proximal point algorithm for solving the general problem \eqref{eq: p}.

The rest of the paper is organized as follows. We establish the estimator consistency in Section~\ref{sec:consistency} and develop a preconditioned proximal point algorithm for solving the regularized regression problem \eqref{eq: p} in Section~\ref{sec:ppa}.
{Extensive numerical experiments are presented in Section~\ref{sec:numerical}. Finally, we} conclude the paper in Section~\ref{sec:conclusion}.

\vspace{2mm}
\noindent {\bf Notations and preliminaries.} The linear map ${\rm vec}: \mathbb{R}^{m\times q}\rightarrow \mathbb{R}^{mq}$ is defined as follows: for a matrix $Y \in \mathbb{R}^{m\times q}$, ${\rm vec}(Y) \in \mathbb{R}^{mq}$ is the column vector obtained by stacking the columns of $Y$ on top of one another. The linear map ${\rm mat}: \mathbb{R}^{mq}\rightarrow \mathbb{R}^{m\times q}$ is the {adjoint operator} of ${\rm vec}$, which is defined as follows: for a vector $y \in \mathbb{R}^{mq}$, ${\rm mat}(y)\in \mathbb{R}^{m\times q}$ is the  matrix obtained by reshaping the elements of $y$ back into a matrix with $m$ rows and $q$ columns, preserving the order of the elements. For $x \in \mathbb{R}$, {$x_{+}:=\max\{x,0\}$, and} ${\rm sgn}(x)$ denotes the sign function of $x$, that is ${\rm sgn}(x) = 1$ if $x>0$; ${\rm sgn}(x) = 0$ if $x=0$; ${\rm sgn}(x) = -1$ if $x<0$.
Let $h:\mathbb{R}^n \to (-\infty,\infty]$ be a closed proper convex function. The Moreau envelope of $h$ at $x$ is defined by
\begin{equation}
\begin{array}{l}
    {\rm E}_h(x) := \min\limits_{y\in\mathbb{R}^n}  h(y) + \frac{1}{2} \|y - x\|^2, \label{def: Moreau}
\end{array}
\end{equation}
and the corresponding proximal mapping ${\rm Prox}_h(x)$ is defined as the unique optimal solution of \eqref{def: Moreau}. It follows from  \citet{moreau1965proximite} and \citet{yosida1980functional} that for any $x\in\mathbb{R}^n$,
$
\nabla {\rm E}_h(x) = x - {\rm Prox}_h(x),
$
and ${\rm Prox}_h(\cdot)$ is Lipschitz continuous with modulus $1$. For $x\in \mathbb{R}^p$, let $\|x\|_1$ and $\|x\|$ be its $\ell_1$ norm and $\ell_2$ norm, respectively. For a matrix $X\in \mathbb{R}^{m\times q}$, we use $\|X\|_2,\|X\|_{F}$, $\|X\|_*$, and $\|X\|_{\infty}$ to denote the spectral norm, Frobenius norm, nuclear norm, and $\ell_{\infty}$ norm, respectively. For $f:\mathbb{R}^m\rightarrow \mathbb{R}^n$, $g:\mathbb{R}^p\rightarrow \mathbb{R}^m$, we denote their composition as $(f\circ g)(x) = f(g(x))$, $x\in \mathbb{R}^p$. Define $[n]:= \{1,\dots,n\}$. We denote $\odot$ as the Hadamard product, which is an element-wise multiplication of two matrices of the same size. Let $\mathbb{S}^n$ be the space of $n \times n$ symmetric matrices and $\mathbb{S}_{+}^n$ be the cone of symmetric positive semidefinite matrices. We use
$X\succeq Y$ to denote $X-Y\in \mathbb{S}_{+}^n$.
{For a locally Lipschitz continuous function} $\mathcal{F}:\,\mathbb{R}^n\to \mathbb{R}^m$, the Bouligand subdifferential (B-subdifferential) of $\mathcal{F}$  is denoted as
$\partial_B \mathcal{F}$, and the Clarke generalized Jacobian of $\mathcal{F}$  is denoted as
$\partial \mathcal{F}$.

\section{Consistency {Analysis}}\label{sec:consistency}
In this section, {we analyze the consistency and limiting distributions of our proposed estimator, through studying the asymptotic behavior of the objective function. As a representative example,} we consider the following matrix regression problem:
\begin{align}
\min_{U\in \mathbb{R}^{m\times q}, \beta \in \mathbb{R}^p}  \Big\{ Z_n(U,\beta):=\frac{1}{n}\sum_{i=1}^n \left(y_i \!-\!\langle X_i,U\rangle \!- \!\langle z_i,\beta\rangle\right)^2 +  \frac{\rho_n}{n}\|U\|_* + \frac{\lambda_n}{n}\|\beta\|_1
\Big\}.\label{eq: zn}
\end{align}
It is a special case of \eqref{eq: p} where the loss function $\hat{\ell}$ takes {the mean squared error}, the penalty {function} $\phi$ is proportional to the nuclear norm of the matrix variate, and the penalty {function} $\psi$ is proportional to the $\ell_1$ norm of the vector variate. For given $n$, $\rho_n$, and $\lambda_n$, we denote $(\widehat{B}_n,\hat{\gamma}_n)$ as a solution of \eqref{eq: zn}. We aim to analyze the consistency and limiting distribution of $(\widehat{B}_n,\hat{\gamma}_n)$.
We first give the following regularity assumptions. {It is worth noting that this assumption is similar to those commonly used in the literature to establish consistency  for various estimators, including the least absolute deviation estimator \cite{bassett1978asymptotic,pollard1991asymptotics}, Lasso-type penalized regression models \cite{knight2000asymptotics}, and regularized matrix regression estimators \cite{li2021double,wei2022asymptotics}.}

\begin{assumption}\label{assu: regularity}
We assume that the following regularity conditions hold:
\begin{align*}
&C_n:=\frac{1}{n}\sum_{i=1}^n {\rm vec}(X_i) {\rm vec}(X_i)^{\intercal}\rightarrow C,\quad
D_n:=\frac{1}{n}\sum_{i=1}^n z_i z_i^{\intercal}\rightarrow D, \quad
H_n:=\frac{1}{n}\sum_{i=1}^n {\rm vec}(X_i)z_i^{\intercal}\rightarrow H,
\end{align*}
where $C\in \mathbb{S}^{mq}_+$, $D\in \mathbb{S}^{p}_+$,
and $H\in \mathbb{R}^{mq\times p}$.
\end{assumption}

{Under the above assumption}, we further denote
\begin{align}\label{eq: S}
S_n:=\begin{pmatrix}
C_n & H_n\\
H_n^{\intercal} & D_n
\end{pmatrix}
\rightarrow
S := \begin{pmatrix}
C & H\\
H^{\intercal} & D
\end{pmatrix}.
\end{align}
{It can be seen} that $S_n $ and $S $ are positive semidefinite, since for any $u\in \mathbb{R}^{mq}$ and $ v\in \mathbb{R}^p$, it holds that
\begin{align*}
\begin{pmatrix}
u^{\intercal} & v^{\intercal}
\end{pmatrix}
S_n
\begin{pmatrix}
u\\
v
\end{pmatrix}
&=\frac{1}{n}\sum_{i=1}^n \left( u^{\intercal} {\rm vec}(X_i) {\rm vec}(X_i)^{\intercal} u + 2u^{\intercal} {\rm vec}(X_i) z_i^{\intercal} v + v^{\intercal} z_i z_i^{\intercal} v \right) \\
&=\frac{1}{n}\sum_{i=1}^n \left(u^{\intercal} {\rm vec}(X_i) + v^{\intercal} z_i  \right)^2\geq 0.
\end{align*}
{Note that when} the matrix $S_n$ is positive definite,  the estimator $(\widehat{B}_n,\hat{\gamma}_n)$ is unique.

The following theorem  shows the $n$-consistency of the estimator $(\widehat{B}_n,\hat{\gamma}_n)$. For penalty parameters $\rho_n = o(n)$ and $\lambda_n = o(n)$, the estimator $(\widehat{B}_n,\hat{\gamma}_n)$ converges to the true regression coefficients $(B,\gamma)$ in probability as $n\rightarrow \infty$. We outline the proof of Theorem~\ref{thm: n-consistency} here, with a detailed proof in Appendix~\ref{appendix: sec1}.

\begin{theorem}\label{thm: n-consistency}
Suppose Assumption \ref{assu: regularity} holds and  $S$ in \eqref{eq: S} is positive definite. If $\rho_n/n\rightarrow \rho_0\geq 0$ and $\lambda_n/n\rightarrow \lambda_0\geq 0$, then
\begin{align*}
(\widehat{B}_n,\hat{\gamma}_n) \rightarrow (\widehat{B},\hat{\gamma}):=\underset{U\in \mathbb{R}^{m\times q}, \beta \in \mathbb{R}^p}{\arg\min}\ Z(U,\beta)
\end{align*}
in probability as $n\rightarrow \infty$, where
\begin{align*}
Z(U,\beta) {:=} & \begin{pmatrix}
{\rm vec}(U- B)\\
\beta-\gamma
\end{pmatrix}^{\intercal}
S
\begin{pmatrix}
{\rm vec}(U- B)\\
\beta-\gamma
\end{pmatrix}
+  \rho_0\|U\|_* + \lambda_0\|\beta\|_1 .
\end{align*}
In particular, if $\rho_n = o(n)$ and $\lambda_n = o(n)$, then $\underset{U\in \mathbb{R}^{m\times q}, \beta \in \mathbb{R}^p}{\arg\min}\ Z(U,\beta) = (B,\gamma)$
and therefore $(\widehat{B}_n,\hat{\gamma}_n)$
is consistent.
\end{theorem}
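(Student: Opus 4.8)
The plan is to follow the classical convexity argument for minimizers of stochastic convex processes, in the spirit of \citet{pollard1991asymptotics} and \citet{knight2000asymptotics}. First I would reparametrize the objective around the true coefficients. Using the data-generating equation $y_i = \langle X_i,B\rangle + \langle z_i,\gamma\rangle + \varepsilon_i$ and setting $u = {\rm vec}(U-B)$ and $v = \beta-\gamma$, each residual becomes $\varepsilon_i - {\rm vec}(X_i)^{\intercal}u - z_i^{\intercal}v$. Expanding the squared loss and subtracting the constant $\frac{1}{n}\sum_{i=1}^n \varepsilon_i^2$, which is independent of $(U,\beta)$ and hence leaves the minimizer unchanged, yields a shifted objective
\begin{align*}
\widetilde{Z}_n(U,\beta) = \begin{pmatrix} u \\ v \end{pmatrix}^{\intercal} S_n \begin{pmatrix} u \\ v \end{pmatrix} - \frac{2}{n}\sum_{i=1}^n \varepsilon_i\big({\rm vec}(X_i)^{\intercal}u + z_i^{\intercal}v\big) + \frac{\rho_n}{n}\|U\|_* + \frac{\lambda_n}{n}\|\beta\|_1.
\end{align*}

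Second, I would establish that $\widetilde{Z}_n(U,\beta) \to Z(U,\beta)$ in probability for each fixed $(U,\beta)$. The quadratic term converges deterministically by Assumption~\ref{assu: regularity} and \eqref{eq: S}, while the penalty terms converge to $\rho_0\|U\|_*$ and $\lambda_0\|\beta\|_1$ under the hypotheses $\rho_n/n\to\rho_0$ and $\lambda_n/n\to\lambda_0$. The cross term vanishes in probability: conditioning on the design, $\frac{1}{n}\sum_{i=1}^n \varepsilon_i\,{\rm vec}(X_i)$ has mean zero and covariance $\frac{\sigma^2}{n}C_n\to 0$, so it tends to $0$ in probability by Chebyshev's inequality, and analogously for the $z_i$ component; this is precisely why the cross term does not survive into the limit $Z$.

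Third, and this is the crux, I would invoke the convexity lemma for convex stochastic processes: a sequence of convex functions on $\mathbb{R}^{mq+p}$ that converges pointwise in probability to a limit possessing a unique minimizer has its minimizers converge in probability to that minimizer. Here $\widetilde{Z}_n$ is convex, being a composition of the squared loss with linear maps plus the convex norms, and the limit $Z$ is strictly convex because $S$ is positive definite, so $Z$ admits the unique minimizer $(\widehat{B},\hat{\gamma})$. The mechanism behind the lemma is that pointwise convergence of convex functions automatically strengthens to uniform convergence on compact sets; this controls the location of the minimizers and forces $(\widehat{B}_n,\hat{\gamma}_n)\to(\widehat{B},\hat{\gamma})$. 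I expect the careful verification of this uniformity, together with the attendant tightness argument ruling out escape of the minimizers to infinity, to be the main technical obstacle, whereas the pointwise convergence in the previous step is routine.

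Finally, for the consistency claim, $\rho_n = o(n)$ and $\lambda_n = o(n)$ give $\rho_0 = \lambda_0 = 0$, so $Z(U,\beta)$ reduces to the pure quadratic form $\begin{pmatrix} {\rm vec}(U-B) \\ \beta-\gamma \end{pmatrix}^{\intercal} S \begin{pmatrix} {\rm vec}(U-B) \\ \beta-\gamma \end{pmatrix}$. Since $S$ is positive definite, this form is uniquely minimized at $U=B$ and $\beta=\gamma$, giving $(\widehat{B},\hat{\gamma})=(B,\gamma)$ and hence the consistency of $(\widehat{B}_n,\hat{\gamma}_n)$.
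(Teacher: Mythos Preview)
Your proposal is correct and follows essentially the same route as the paper: both establish pointwise convergence in probability of the (shifted) objective via Assumption~\ref{assu: regularity} and a Chebyshev argument for the cross term, invoke \citet{pollard1991asymptotics} to upgrade this to uniform convergence on compacta via convexity, and then pass to the argmin using strict convexity of $Z$ when $S\succ 0$. The only point where the paper adds detail beyond your sketch is the tightness step you flagged as the main obstacle: it bounds $(\widehat{B}_n,\hat{\gamma}_n)$ in probability by first bounding the unpenalized least-squares minimizer $(\widehat{B}_n^{(0)},\hat{\gamma}_n^{(0)})$ via the inequality $S_n\succeq \eta I$ for large $n$, and then comparing objective values to transfer the bound to the penalized estimator.
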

\noindent \textbf{Sketch of proof.}
First, we show that $
Z_n(U,\beta) - Z(U,\beta)\rightarrow \sigma^2
$ in probability as $n\rightarrow \infty$. By the definitions of $Z_n$ and $Z$, we have
\begin{align*}
Z_n(U,\beta) \!- \!Z(U,\beta)
& =  {\rm vec}(U\!-\! B)^{\intercal} (C_n \!-\! C) {\rm vec}(U\!-\! B) + (\beta\!-\!\gamma)^{\intercal} (D_n\!-\!D) (\beta\!-\!\gamma) \\
&\qquad + 2  {\rm vec}(U- B)^{\intercal}  (H_n-H) (\beta-\gamma)   + \left(\frac{\rho_n}{n} -\rho_0\right)\|U\|_*  \\
&\qquad  + \left(\frac{\lambda_n}{n}- \lambda_0 \right)\|\beta\|_1 +\frac{1}{n}\sum_{i=1}^n \varepsilon_i^2 - \frac{2}{n}\sum_{i=1}^n  \varepsilon_i \left( \langle X_i,U-B\rangle +  \langle z_i,\beta-\gamma\rangle \right).
\end{align*}
{Then we} show that $\frac{1}{n}\sum_{i=1}^n  \varepsilon_i \left( \langle X_i,U-B\rangle +  \langle z_i,\beta-\gamma\rangle \right) \to 0$ in probability and therefore the first step completes.
By \citet{pollard1991asymptotics}, it further holds that, for any compact set $K\subset \mathbb{R}^{m\times q}\times \mathbb{R}^p$,
\begin{align}\label{eq: Z_converge_on_k_v1}
\sup_{(U,\beta)\in K} \ \Big| Z_n(U,\beta) - Z(U,\beta) - \sigma^2 \Big| \rightarrow 0
\mbox{ in probability as } n\rightarrow \infty.
\end{align}

Second, we show the sequence $\{(\widehat{B}_n,\hat{\gamma}_n)\}$ is  bounded in probability via showing the boundedness of $\{(\widehat{B}_n^{(0)},\hat{\gamma}_n^{(0)})\}$, defined by
\begin{align*}
(\widehat{B}_n^{(0)},\hat{\gamma}_n^{(0)}) :=\underset{U\in \mathbb{R}^{m\times q}, \beta \in \mathbb{R}^p}{\arg\min} \left\{
Z_n^{(0)}(U,\beta):=\frac{1}{n}\sum_{i=1}^n (y_i-\langle X_i,U\rangle - \langle z_i,\beta\rangle)^2 \right\}.
\end{align*}
The boundedness {of $\{(\widehat{B}_n,\hat{\gamma}_n)\}$} then ensures the applicability of \eqref{eq: Z_converge_on_k_v1}.

Finally, we establish the convergence $(\widehat{B}_n,\hat{\gamma}_n)\rightarrow (\widehat{B},\hat{\gamma})$ in probability by showing the convergence $Z(\widehat{B}_n,\hat{\gamma}_n) \rightarrow Z(\widehat{B},\hat{\gamma})$, leveraging the fact that $Z(\cdot,\cdot)$ is strongly convex.
\hfill $\blacksquare$

As {stated} in the above theorem, {$n$-consistency of the estimator $(\widehat{B}_n,\hat{\gamma}_n)$ requires $\rho_n = o(n)$ and $\lambda_n = o(n)$.} However, for $\sqrt{n}$-consistency {of $(\widehat{B}_n,\hat{\gamma}_n)$}, we require penalty parameters grow more slowly: $\rho_n = o(\sqrt{n})$, $\lambda_n = o(\sqrt{n})$, as shown in the following theorem.

\begin{theorem}
Suppose Assumption \ref{assu: regularity} holds and  $S$ in \eqref{eq: S} is positive definite. If $\rho_n/\sqrt{n}\rightarrow \rho_0\geq 0$ and $\lambda_n/\sqrt{n}\rightarrow \lambda_0\geq 0$, then
\begin{align*}
\sqrt{n} (\widehat{B}_n-B,\hat{\gamma}_n-\gamma) \rightarrow \underset{U\in \mathbb{R}^{m\times q}, \beta \in \mathbb{R}^p}{\arg\min}\ F(U,\beta)
\end{align*}
in distribution as $n\rightarrow \infty$, where
\begin{align*}
&F(U,\beta)  := {\rm vec}( U)^{\intercal} C {\rm vec}( U) + \beta^{\intercal} D \beta + 2 {\rm vec}( U)^{\intercal}  H \beta - 2 {\rm vec}( U)^{\intercal} w_1 - 2\beta^{\intercal} w_2   \\
& \quad + \rho_0 \Big(\sum_{i=1}^r   P_{\cdot i}^{\intercal} U Q_{\cdot i}  + \sum_{i = r + 1}^{\min\{m,q\}} |P_{\cdot i}^{\intercal} U Q_{\cdot i}| \Big) + \lambda_0 \sum_{i=1}^p \Big( \beta_i \, {\rm sgn}(\gamma_i)   I(\gamma_i\neq 0) + |\beta_i| I(\gamma_i=0) \Big).
\end{align*}
Here, $I({\rm Event}) = 1$ if Event happens, and $I({\rm Event}) =0$ otherwise. $\begin{pmatrix}
w_1\\
w_2
\end{pmatrix}\sim\mathcal{N}(0,\sigma^2 S)$ is a random variable in $\mathbb{R}^{mq+p}$.
$B = P\Sigma Q^{\intercal}$ is the singular value decomposition of $B$, where $\Sigma\in \mathbb{R}^{m\times q}$ is a rectangular diagonal matrix with diagonal elements $\alpha_1 \geq \cdots \geq \alpha_r > \alpha_{r+1} = \cdots = \alpha_{\min\{m,q\}} = 0$, $r = {\rm rank}(B)$,
$P = (P_{\cdot 1},\cdots,P_{\cdot m})\in \mathbb{R}^{m\times m}$, and $Q = (Q_{\cdot 1},\cdots,Q_{\cdot q})\in \mathbb{R}^{q\times q}$ are orthogonal matrices.
\\[2mm]
In particular, if $\rho_n = o(\sqrt{n})$ and $\lambda_n = o(\sqrt{n})$, then $\underset{U\in \mathbb{R}^{m\times q}, \beta \in \mathbb{R}^p}{\arg\min}\ F(U,\beta) = (U^*,\beta^*)$
with {$[
{\rm vec}(U^*);
\beta^*
] = S^{-1}[
w_1;
w_2
]\sim\mathcal{N}(0,\sigma^2 S^{-1}).
$}
\end{theorem}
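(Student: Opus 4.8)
The plan is to use the \emph{argmin of a convex process} technique (as in \citet{pollard1991asymptotics} and the Lasso asymptotics of \citet{knight2000asymptotics}), reparametrizing the problem around the true coefficients at the $\sqrt{n}$ scale. Since $(\widehat{B}_n,\hat{\gamma}_n)$ minimizes $Z_n$, the pair $\sqrt{n}(\widehat{B}_n-B,\hat{\gamma}_n-\gamma)$ minimizes the recentered, rescaled objective
\[
F_n(U,\beta) := n\,Z_n\Big(B + \tfrac{U}{\sqrt{n}},\ \gamma + \tfrac{\beta}{\sqrt{n}}\Big) - \sum_{i=1}^n \varepsilon_i^2 - \rho_n\|B\|_* - \lambda_n\|\gamma\|_1 ,
\]
where the subtracted quantities are constants in $(U,\beta)$ and hence do not move the minimizer. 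Substituting the model $y_i=\langle X_i,B\rangle+\langle z_i,\gamma\rangle+\varepsilon_i$ and expanding the square splits $F_n$ into a deterministic quadratic form, a linear random term, and two recentered penalty terms.

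Second, I would establish the finite-dimensional (pointwise in $(U,\beta)$) convergence $F_n(U,\beta)\to F(U,\beta)$ in distribution. The quadratic part equals $[{\rm vec}(U);\beta]^{\intercal} S_n\,[{\rm vec}(U);\beta]$ and converges to $[{\rm vec}(U);\beta]^{\intercal} S\,[{\rm vec}(U);\beta]$ by Assumption~\ref{assu: regularity}. The linear part is $-2[{\rm vec}(U);\beta]^{\intercal}[W_{1n};W_{2n}]$, with $W_{1n}=n^{-1/2}\sum_i \varepsilon_i\,{\rm vec}(X_i)$ and $W_{2n}=n^{-1/2}\sum_i \varepsilon_i z_i$; since the $\varepsilon_i$ are i.i.d.\ $\mathcal{N}(0,\sigma^2)$ and the stacked covariance equals $\sigma^2 S_n\to\sigma^2 S$, the (Lindeberg--Feller) central limit theorem yields $[W_{1n};W_{2n}]\to[w_1;w_2]\sim\mathcal{N}(0,\sigma^2 S)$. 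Finally, the penalty terms are $\frac{\rho_n}{\sqrt{n}}\cdot\sqrt{n}\big(\|B+U/\sqrt{n}\|_*-\|B\|_*\big)$ and the analogous $\ell_1$ expression; using $\rho_n/\sqrt{n}\to\rho_0$, $\lambda_n/\sqrt{n}\to\lambda_0$ together with the one-sided directional differentiability of the two norms, these converge to $\rho_0$ and $\lambda_0$ times the respective directional derivatives at $B$ and $\gamma$ along $U$ and $\beta$.

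The main obstacle is the directional derivative of the nuclear norm at the possibly rank-deficient matrix $B$. Writing $B=P\Sigma Q^{\intercal}$ and using orthogonal invariance, one analyzes $\|\Sigma + t\,P^{\intercal}UQ\|_*$ for small $t>0$. The singular values associated with the positive block $\alpha_1,\dots,\alpha_r$ perturb smoothly and contribute the linear term $\sum_{i=1}^r P_{\cdot i}^{\intercal}UQ_{\cdot i}$, whereas the zero singular values are nonsmooth and, via a Schur-complement analysis of the degenerate block, contribute the term built from $P_{\cdot i}^{\intercal}UQ_{\cdot i}$ for $i>r$; the $\ell_1$ directional derivative $\sum_i\big(\beta_i\,{\rm sgn}(\gamma_i)I(\gamma_i\neq0)+|\beta_i|I(\gamma_i=0)\big)$ is the standard, easier computation. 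Care is needed because the expansion at a repeated or zero singular value does not follow from naive eigenvalue perturbation and requires isolating the degenerate subspace.

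Finally, I would invoke the convexity-based argmin convergence theorem: each $F_n$ is convex, and $S\succ0$ makes the limit $F$ strongly convex (plus convex penalties), so $F$ has a unique minimizer. Pointwise convergence of convex functions upgrades to uniform convergence on compacta and hence to convergence of minimizers in distribution, giving $\sqrt{n}(\widehat{B}_n-B,\hat{\gamma}_n-\gamma)\to\arg\min F$. For the special case $\rho_n=o(\sqrt{n})$, $\lambda_n=o(\sqrt{n})$ one has $\rho_0=\lambda_0=0$, so $F(U,\beta)=[{\rm vec}(U);\beta]^{\intercal}S\,[{\rm vec}(U);\beta]-2[{\rm vec}(U);\beta]^{\intercal}[w_1;w_2]$; setting the gradient to zero gives the unique minimizer $[{\rm vec}(U^*);\beta^*]=S^{-1}[w_1;w_2]$, and since $[w_1;w_2]\sim\mathcal{N}(0,\sigma^2 S)$ this is $\mathcal{N}(0,\sigma^2 S^{-1})$, as claimed.
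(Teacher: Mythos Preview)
Your proposal is correct and follows essentially the same route as the paper: reparametrize at the $\sqrt{n}$ scale, show pointwise convergence in distribution of the convex processes $F_n\to F$ by splitting into a quadratic part (handled by Assumption~\ref{assu: regularity}), a linear random part (handled by the CLT giving $[w_1;w_2]\sim\mathcal{N}(0,\sigma^2 S)$), and the two directional-derivative penalty terms, then invoke an argmin theorem for convex processes. The only cosmetic differences are that the paper computes the nuclear-norm directional derivative by quoting \citet[Theorem~1]{watson1992characterization} (writing it as $\max_{d\in\partial\|\alpha\|_1}\sum_i d_i P_{\cdot i}^{\intercal}UQ_{\cdot i}$ and then separating the sum) rather than your direct singular-value perturbation sketch, and it cites \citet{geyer1994asymptotics} for the argmin step rather than the Pollard/Knight--Fu formulation you name.
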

\begin{proof}
We define
\begin{align*}
F_n(U,\beta)&:=\sum_{i=1}^n \left[ \Big(\varepsilon_i- \frac{1}{\sqrt{n}}\langle X_i,U\rangle -\frac{1}{\sqrt{n}}\langle z_i,\beta \rangle \Big)^2 -\varepsilon_i^2 \right]\\
&\quad + \rho_n\left( \|B+\frac{1}{\sqrt{n}}U\|_* - \|B\|_*\right) + \lambda_n\left( \|\gamma+\frac{1}{\sqrt{n}}\beta\|_1 - \|\gamma\|_1\right),
\end{align*}
for any $(U,\beta)\in \mathbb{R}^{m\times q}\times \mathbb{R}^p$, which is minimized at $\sqrt{n} (\widehat{B}_n-B,\hat{\gamma}_n-\gamma)$ by the definition of $(\widehat{B}_n,\hat{\gamma}_n)$  and  the fact that
\begin{align*}
\Big(\varepsilon_i- \frac{1}{\sqrt{n}}\langle X_i,U\rangle -\frac{1}{\sqrt{n}}\langle z_i,\beta \rangle \Big)^2
&= \Big(y_i - \langle X_i,B + \frac{1}{\sqrt{n}}U \rangle - \langle z_i,\gamma+\frac{1}{\sqrt{n}}\beta \rangle \Big)^2.
\end{align*}

By direct computations, we have that
\begin{align*}
& \frac{1}{\sqrt{n}}\sum_{i=1}^n \varepsilon_i \left( \langle X_i,U\rangle +  \langle z_i,\beta \rangle \right) = \begin{pmatrix}
{\rm vec}(U)^{\intercal}  &
 \beta^{\intercal}
\end{pmatrix}
\begin{pmatrix}
 \frac{1}{\sqrt{n}}\sum_{i=1}^n  \varepsilon_i {\rm vec}(X_i)\\
 \frac{1}{\sqrt{n}}\sum_{i=1}^n  \varepsilon_i z_i
\end{pmatrix}, \mbox{ where} \\
& \mathbb{E}\left[
\begin{pmatrix}
\frac{1}{\sqrt{n}}\sum_{i=1}^n  \varepsilon_i {\rm vec}(X_i)\\
\frac{1}{\sqrt{n}}\sum_{i=1}^n  \varepsilon_i z_i
\end{pmatrix}
\right] = 0, \ \mbox{ and }\
{\rm Var}\left[
\begin{pmatrix}
 \frac{1}{\sqrt{n}}\sum_{i=1}^n  \varepsilon_i {\rm vec}(X_i)\\
\frac{1}{\sqrt{n}}\sum_{i=1}^n  \varepsilon_i z_i
\end{pmatrix}
\right] = \sigma^2 S_n.
\end{align*}

With the above results, the first term of $F_n$
\begin{align*}
 & \sum_{i=1}^n \left[ \Big(\varepsilon_i- \frac{1}{\sqrt{n}}\langle X_i,U\rangle -\frac{1}{\sqrt{n}}\langle z_i,\beta \rangle \Big)^2 -\varepsilon_i^2 \right] \\
= & \frac{1}{n} \!\sum_{i=1}^n \!(\langle X_i,U \rangle)^2 \!\!+\! \frac{1}{n} \!\sum_{i=1}^n \! (\langle z_i,\beta \rangle)^2 \!\! +\! \frac{2}{n} \!\sum_{i=1}^n\! \langle X_i,U\rangle\langle z_i,\beta \rangle
\!-\! \frac{2}{\sqrt{n}}\sum_{i=1}^n \!\varepsilon_i\! \left( \langle X_i,U\rangle \!\!+\!\! \langle z_i,\beta \rangle \right) \\
=  & {\rm vec}(U)^{\intercal} C_n {\rm vec}(U) + \beta^{\intercal} D_n \beta + 2 {\rm vec}(U)^{\intercal} H_n \beta - \frac{2}{\sqrt{n}}\sum_{i=1}^n \varepsilon_i \left( \langle X_i,U\rangle + \langle z_i,\beta \rangle \right) \\
\rightarrow  & {\rm vec}( U)^{\intercal} C {\rm vec}( U) + \beta^{\intercal} D \beta + 2 {\rm vec}( U)^{\intercal}  H \beta - 2 {\rm vec}( U)^{\intercal} w_1 - 2\beta^{\intercal} w_2
\end{align*}
in distribution. Here $\begin{pmatrix}
    w_1\\w_2
\end{pmatrix} \!\!\sim\!\mathcal{N}(0,\!\sigma^2 S)$ is the limiting distribution of the random vector
$\begin{pmatrix}
 \!\frac{1}{\sqrt{n}}\!\!\sum_{i=1}^n \! \varepsilon_i {\rm vec}(X_i)\\
 \frac{1}{\sqrt{n}}\sum_{i=1}^n  \varepsilon_i z_i
\end{pmatrix}$
as $n\rightarrow \infty$. For the remaining terms of $F_n$, we have
\begin{align*}
&\lim_{n \rightarrow \infty} \rho_n\left[ \|B+\frac{1}{\sqrt{n}}U\|_* - \|B\|_*\right]
= \rho_0 \Big( \sum_{i=1}^r   P_{\cdot i}^{\intercal} U Q_{\cdot i}  + \sum_{i = r + 1}^{\min\{m,q\}} |P_{\cdot i}^{\intercal} U Q_{\cdot i}| \Big),\\
&\lim_{n \rightarrow \infty} \lambda_n\left[ \|\gamma+\frac{1}{\sqrt{n}}\beta\|_1 - \|\gamma\|_1\right] = \lambda_0 \sum_{i=1}^p \Big( \beta_i\,  {\rm sgn}(\gamma_i)  I(\gamma_i\neq 0) + |\beta_i| I(\gamma_i=0) \Big),
\end{align*}
where the first equation follows from \citet[Theorem 1]{watson1992characterization}:
\begin{align*}
&\lim_{n \rightarrow \infty} \frac{\rho_n}{\sqrt{n}} \frac{\|B + \frac{1}{\sqrt{n}}U\|_* - \|B\|_*}{1/\sqrt{n}}
=\rho_0  \max_{d\in \partial \|\alpha\|_1 } \sum_{i=1}^{\min\{m,q\}} d_iP_{\cdot i}^{\intercal} U Q_{\cdot i}\\
= \,\, & \rho_0 \sum_{i=1}^{\min\{m,q\}} \max_{d_i\in \partial|\alpha_i| } d_iP_{\cdot i}^{\intercal} U Q_{\cdot i}
= \rho_0 \Big(\sum_{i=1}^r   P_{\cdot i}^{\intercal} U Q_{\cdot i}  + \sum_{i = r + 1}^{\min\{m,q\}} |P_{\cdot i}^{\intercal} U Q_{\cdot i}| \Big),
\end{align*}
and the second equation follows from direct computations.
Therefore, we have proved that
$ F_n (U,\beta) \rightarrow F(U,\beta) $
in distribution as $n\rightarrow \infty$.
Lastly, given that $F(\cdot,\cdot)$ is  strongly convex with $S\succ 0$, guaranteeing a unique minimizer, and $F_n(\cdot,\cdot)$ is convex, we deduce from the work of \citet{geyer1994asymptotics} that
$\sqrt{n} (\widehat{B}_n-B,\hat{\gamma}_n-\gamma) \rightarrow \underset{U\in \mathbb{R}^{m\times q}, \beta \in \mathbb{R}^p}{\arg\min} \ F(U,\beta)$
in distribution as $n\rightarrow \infty$.
In particular, when $\rho_0=\lambda_0=0$, the minimizer of $F$ is given by
\begin{align*}
  (U^*,\beta^*) := \underset{U\in \mathbb{R}^{m\times q}, \beta \in \mathbb{R}^p}{\arg\min}  F(U,\beta)  \ \mbox{with} \
  \begin{pmatrix}
{\rm vec}(U^*)\\
\beta^*
\end{pmatrix} = S^{-1}\begin{pmatrix}
w_1\\
w_2
\end{pmatrix}\sim \mathcal{N}(0,\sigma^2 S^{-1}).
\end{align*}
The proof is completed.
\end{proof}

\section{A Preconditioned Proximal Point Algorithm}\label{sec:ppa}
In this section, we design a preconditioned proximal point algorithm (PPA) for solving problem \eqref{eq: p}. Starting from an initial point $(B^0,\gamma^0)\in \mathbb{R}^{m\times q}\times \mathbb{R}^p$, the preconditioned PPA iteratively computes a sequence $\{(B^k,\gamma^k)\}\subseteq \mathbb{R}^{m\times q}\times \mathbb{R}^p$ as follows:
\begin{align}
(B^{k+1},\gamma^{k+1})\approx \mathbb{J}_k(B^k,\gamma^k):=\underset{B\in \mathbb{R}^{m\times q} ,{\gamma\in\mathbb{R}^p}}{\arg\min} G_k(B,\gamma),\label{eq: ppa_k}
\end{align}
where
\begin{align*}
G_k(B,\gamma) :=G(B,\gamma) + \frac{1}{2\sigma_k}(\|B-B^k\|^2 + \|\gamma-\gamma^k\|^2 + \nu \|X {\rm vec}(B) + Z \gamma - X {\rm vec} (B^k) - Z \gamma^k\|^2),
\end{align*}
$\{\sigma_k\}$ is a sequence of nondecreasing positive real numbers, and $\nu > 0$ is a scaling parameter. When $\nu = 0$, it {reduces} to the classic PPA \citep{rockafellar1976monotone}. We say \eqref{eq: ppa_k} is a preconditioned PPA iteration due to the last term of $G_k$. This term is inevitable to derive a smooth unconstrained dual problem of  \eqref{eq: ppa_k}, as we will demonstrate shortly. Clearly, an efficient solver for \eqref{eq: ppa_k} is crucial for the effective implementation of the preconditioned PPA.

We let {the convex function $h:\mathbb{R}^n\rightarrow \mathbb{R}$ be defined as}
\begin{align}\label{eq: fct_h}
    h(s):=\hat{\ell}(s,y)
\end{align}
and $s^k:=X {\rm vec}(B^k) + Z \gamma^k$, the minimization problem in \eqref{eq: ppa_k} can be written equivalently as
\begin{equation}\label{eq: c_ppa_k}
\begin{aligned}
\min_{B\in \mathbb{R}^{m\times q},\gamma\in \mathbb{R}^p,s\in \mathbb{R}^n} &
h(s) \!+\! \phi(B) \!+\! \psi(\gamma) \!+ \!\frac{1}{2\sigma_k}\|B\!\!-\!\!B^k\|^2 \!+ \!\frac{1}{2\sigma_k}\|\gamma\!\!-\!\!\gamma^k\|^2 \!+\! \frac{\nu}{2\sigma_k}\|s \!\!-
\!\! s^k\|^2
\\
{\rm s.t.} \quad\quad\quad & \ X {\rm vec}(B) + Z \gamma -s = 0.
\end{aligned}
\end{equation}
The Lagrangian function associated with \eqref{eq: c_ppa_k} is
\begin{align*}
&\mathcal{L}(B,\gamma,s;\xi)
=\  h(s) + \phi(B) + \psi(\gamma) + \langle \xi,X {\rm vec}(B) + Z \gamma -s\rangle  \\
&\qquad \qquad \qquad \quad + \frac{1}{2\sigma_k}\|B-B^k\|^2 + \frac{1}{2\sigma_k}\|\gamma-\gamma^k\|^2 + \frac{\nu}{2\sigma_k}\|s - s^k\|^2  \\
= & h(s) + \phi(B) + \psi(\gamma) + \frac{1}{2\sigma_k} \left( \|B^k\|^2  + \| \gamma^k\|^2 + \nu \|s^k\|^2 \right)\\
&  + \frac{1}{2\sigma_k}\|B\!-\!B^k\!+\! \sigma_k {\rm mat} (X^{\intercal}\xi) \|^2 + \frac{1}{2\sigma_k}\|\gamma \!-\!\gamma^k \!+\!\sigma_k Z^{\intercal}\xi\|^2 + \frac{\nu}{2\sigma_k}\|s \!-\! s^k \!-\!\frac{\sigma_k}{\nu} \xi\|^2 \\
&   -\frac{1}{2\sigma_k} \left( \|B^k - \sigma_k {\rm mat} (X^{\intercal}\xi)\|^2  + \|\gamma^k -\sigma_k Z^{\intercal}\xi\|^2  + \nu \|s^k +\frac{\sigma_k}{\nu} \xi\|^2 \right) ,
\end{align*}
for $ (B,\gamma,s,\xi)\in \mathbb{R}^{m\times q}\times \mathbb{R}^p\times \mathbb{R}^n\times \mathbb{R}^n$.
Therefore, the Lagrangian dual problem of \eqref{eq: c_ppa_k} takes the form of
\begin{align}
\max_{\xi \in \mathbb{R}^n} \ \left\{  \Phi_k(\xi):= \min_{B\in \mathbb{R}^{m\times q},\gamma\in \mathbb{R}^p,s\in \mathbb{R}^n} \ \mathcal{L}(B,\gamma,s;\xi) \right\}.\label{eq: ppa_k_d}
\end{align}
{By} the definition of Moreau envelope \eqref{def: Moreau}, {it can be derived that}
\begin{align*}
 \Phi_k(\xi) = \,&
\frac{1}{\sigma_k} \bigg[ {\rm E}_{\sigma_k \phi}(B^k \!-\! \sigma_k  {\rm mat}(X^{\intercal}\xi)) + {\rm E}_{\sigma_k \psi}(\gamma^k \!-\!\sigma_k Z^{\intercal}\xi) +  \nu  {\rm E}_{\sigma_k h/\nu} (s^k \!+\!\frac{\sigma_k}{\nu} \xi)  \\
& -\frac{1}{2} \left( \|B^k - \sigma_k {\rm mat} (X^{\intercal}\xi)\|^2  + \|\gamma^k -\sigma_k Z^{\intercal}\xi\|^2  + \nu \|s^k +\frac{\sigma_k}{\nu} \xi\|^2 \right)\\
&+ \frac{1}{2} \left( \|B^k\|^2  + \| \gamma^k\|^2 + \nu \|s^k\|^2 \right) \bigg].
\end{align*}
{An exciting observation} is that $\Phi(\cdot)$ is continuously differentiable {with Lipschitz continuous gradient}, and therefore we can solve problem \eqref{eq: ppa_k_d} efficiently via a non-smooth Newton type method for solving the equation $\nabla \Phi_k(\xi)=0$; see Section~\ref{sec:ssn} for details.
In addition, the Karush-Kuhn-Tucker (KKT) optimality conditions associated with \eqref{eq: c_ppa_k} and \eqref{eq: ppa_k_d} are
\begin{align*}
&B = {\rm Prox}_{\sigma_k \phi}(B^k - \sigma_k {\rm mat}(X^{\intercal}\xi)), \quad \gamma = {\rm Prox}_{\sigma_k \psi}(\gamma^k -\sigma_k Z^{\intercal}\xi),\\
&s = {\rm Prox}_{\sigma_k h/\nu} (s^k +\frac{\sigma_k}{\nu} \xi), \quad X {\rm vec}(B) + Z \gamma -s = 0.
\end{align*}
Once we obtain an approximate dual solution $\xi^{k+1}$ to \eqref{eq: ppa_k_d}, we can construct an approximate primal solution $(B^{k+1},\gamma^{k+1},s^{k+1})$ to \eqref{eq: c_ppa_k} based on the above KKT conditions; see  \eqref{eq:ppa_update_primal}.

\begin{algorithm}
\caption{A preconditioned proximal point algorithm for solving \eqref{eq: p}}
\label{alg:algorithm1}
\begin{algorithmic}[1]
\REQUIRE $\nu > 0$, $0 < \sigma_0 \leq \sigma_{\infty} < \infty$, $\epsilon_k \geq 0$ with $\sum_{k=0}^{\infty} \epsilon_k < \infty$;

\ENSURE an approximate optimal solution $(B^{k+1},\gamma^{k+1})$ to \eqref{eq: p}.

\STATE \textbf{Initialization}: choose $B^0 \in \mathbb{R}^{m\times q}$, $\gamma^0 \in \mathbb{R}^{p}$, $k = 0$.
\REPEAT
\STATE \textbf{Step 1.} Solve via Algorithm~\ref{alg:semismoothNewton}
\begin{equation}
    \xi^{k+1}\approx \arg\max{\Phi_{k}(\xi)}.
    \label{eq: ppa_sub}
\end{equation}
\STATE \textbf{Step 2.} Compute
\begin{align}\label{eq:ppa_update_primal}
\left\{
\begin{aligned}
&B^{k+1} = {\rm Prox}_{\sigma_k \phi}(B^k - \sigma_k {\rm mat}(X^{\intercal}\xi^{k+1})),\\
&\gamma^{k+1} = {\rm Prox}_{\sigma_k \psi}(\gamma^k -\sigma_k Z^{\intercal}\xi^{k+1}),\\
&s^{k+1} = {\rm Prox}_{\sigma_k h/\nu} (s^k +\frac{\sigma_k}{\nu} {\xi^{k+1}}).
\end{aligned}
\right.
\end{align}
\STATE \textbf{Step 3.} Update $\sigma_{k+1} \in [\sigma_k, \sigma_{\infty}]$, $k \leftarrow k + 1$.
\UNTIL Stopping criterion is satisfied.
\end{algorithmic}
\end{algorithm}

The preconditioned PPA for solving \eqref{eq: p} is given in Algorithm \ref{alg:algorithm1}.  For controlling the inexactness in the subproblem \eqref{eq: ppa_sub}, we use the following implementable stopping condition \eqref{eq: stopA} based on the duality gap between the subproblem \eqref{eq: ppa_k} and its dual problem \eqref{eq: ppa_k_d}:
\begin{align}\label{eq: stopA}
G_k(B^{k+1},\gamma^{k+1}) - \Phi_k(\xi^{k+1}) \leq\frac{\epsilon_k^2}{2\sigma_k},
\quad \epsilon_k \geq 0,
\quad \sum_{k=0}^{\infty} \epsilon_k < \infty.
\end{align}

For completeness, we present the global convergence of Algorithm~\ref{alg:algorithm1} in Theorem~\ref{thm:convergence}. Its proof follows a similar approach to that of \citet[Theorem 2.1]{lin2023highly}, using results of \citet[Exercise 8.8]{RockafellarWets1998} and \citet[Theorem 2.3]{li2020}.

\begin{theorem}\label{thm:convergence}
Assume the optimal solution set of
problem \eqref{eq: p} is nonempty, denoted as $\Omega$.
Let $\left\{\left(B^k,\gamma^k,\xi^k\right)\right\}$ be the sequence generated by Algorithm~\ref{alg:algorithm1}, where the stopping criterion \eqref{eq: stopA} is satisfied at {\bf Step 1} of each iteration when solving \eqref{eq: ppa_sub}.  Then the sequence $\left\{(B^k,\gamma^k)\right\}$ is bounded and converges to some $(B^*,\gamma^*) \in \Omega$.
\end{theorem}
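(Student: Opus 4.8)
The plan is to recast Algorithm~\ref{alg:algorithm1} as an inexact proximal point method applied to the maximal monotone operator $T := \partial G$ on the product space $\mathbb{W} := \mathbb{R}^{m\times q}\times \mathbb{R}^p$, but measured in a preconditioned metric. Writing $w := (B,\gamma)$ and introducing the linear map $\mathcal{A}:\mathbb{W}\to \mathbb{R}^n$, $\mathcal{A}(B,\gamma) = X{\rm vec}(B) + Z\gamma$, the proximal term in $G_k$ is exactly $\frac{1}{2\sigma_k}\langle w - w^k, \mathcal{M}(w-w^k)\rangle$ with $\mathcal{M} := \mathcal{I} + \nu\,\mathcal{A}^{*}\mathcal{A}$. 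Since $\nu>0$ and $\mathcal{I}$ is the identity on $\mathbb{W}$, the operator $\mathcal{M}$ is self-adjoint and positive definite, so $\|\cdot\|_{\mathcal{M}} := \langle \cdot, \mathcal{M}\,\cdot\rangle^{1/2}$ is a norm equivalent to the canonical one. The exact map $\mathbb{J}_k$ in \eqref{eq: ppa_k} is then the resolvent $(\mathcal{M} + \sigma_k T)^{-1}\mathcal{M}$ evaluated at $w^k$, i.e.\ a genuine proximal step for $T$ in the $\mathcal{M}$-inner product. Because $G$ is closed proper convex, $T$ is maximal monotone, and the optimality condition $0\in T(w)$ characterizing minimizers (via the sum rule of \citet[Exercise 8.8]{RockafellarWets1998}) gives $T^{-1}(0)=\Omega$, which is nonempty by assumption.

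The second step is to show that the implementable stopping rule \eqref{eq: stopA}, phrased as a duality gap between the subproblem \eqref{eq: c_ppa_k} and its dual \eqref{eq: ppa_k_d}, implies the classical Rockafellar accuracy criterion $\|w^{k+1} - \mathbb{J}_k(w^k)\|_{\mathcal{M}} \le \epsilon_k$. Strong duality holds for \eqref{eq: c_ppa_k} since its objective is strongly convex and the only constraint is affine and always feasible, so the optimal primal value equals the dual optimum, i.e.\ $G_k(\mathbb{J}_k(w^k)) = \max_{\xi} \Phi_k(\xi)$. The objective $G_k$ is strongly convex with modulus $1/\sigma_k$ in the $\mathcal{M}$-norm and $\mathbb{J}_k(w^k)$ is its unique minimizer, so the quadratic growth bound yields $\frac{1}{2\sigma_k}\|w^{k+1}-\mathbb{J}_k(w^k)\|_{\mathcal{M}}^2 \le G_k(w^{k+1}) - G_k(\mathbb{J}_k(w^k)) \le G_k(w^{k+1}) - \Phi_k(\xi^{k+1})$, where the last inequality uses $\Phi_k(\xi^{k+1})\le \max_{\xi}\Phi_k(\xi)=G_k(\mathbb{J}_k(w^k))$. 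Combining with \eqref{eq: stopA} gives $\|w^{k+1}-\mathbb{J}_k(w^k)\|_{\mathcal{M}}^2 \le \epsilon_k^2$, i.e.\ the desired summable-error condition since $\sum_{k} \epsilon_k < \infty$.

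Finally, with $T$ maximal monotone, $\Omega = T^{-1}(0)\neq\emptyset$, the step sizes satisfying $\sigma_k \ge \sigma_0 > 0$, and the inexactness controlled in the $\mathcal{M}$-metric as above, I would invoke the convergence theorem for the preconditioned inexact PPA, following \citet[Theorem 2.3]{li2020} and the argument of \citet[Theorem 2.1]{lin2023highly}. The core estimate is the approximate Fej\'er monotonicity $\|w^{k+1}-\bar{w}\|_{\mathcal{M}} \le \|w^k-\bar{w}\|_{\mathcal{M}} + \epsilon_k$ for every $\bar{w}\in\Omega$, which, together with summability of $\{\epsilon_k\}$, forces $\{w^k\}$ to be bounded; a standard cluster-point argument using closedness of the graph of $T$ then upgrades boundedness to convergence of the whole sequence to a single limit $(B^*,\gamma^*)\in\Omega$. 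The main obstacle I anticipate is the bookkeeping in the second step: correctly passing from the duality-gap criterion to the $\mathcal{M}$-weighted proximal-distance criterion while tracking the preconditioner $\mathcal{M}$ throughout, so that the hypotheses of the cited inexact-PPA theorems are met verbatim. Once that translation is in place, the convergence conclusion follows essentially by direct citation.
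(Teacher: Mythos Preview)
Your proposal is correct and follows essentially the same route the paper indicates: it defers to \citet[Theorem 2.1]{lin2023highly} and \citet[Theorem 2.3]{li2020}, with \citet[Exercise 8.8]{RockafellarWets1998} supplying the subdifferential sum rule, and you have fleshed out precisely the translation (preconditioner $\mathcal{M}=\mathcal{I}+\nu\mathcal{A}^*\mathcal{A}$, duality-gap-to-Rockafellar criterion via strong convexity of $G_k$ in $\|\cdot\|_{\mathcal{M}}$) that those citations require.
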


\subsection{A Semismooth Newton Method for Solving the PPA Subproblem}\label{sec:ssn}

Efficiently solving the subproblem \eqref{eq: ppa_sub} poses a significant implementation challenge for Algorithm~\ref{alg:algorithm1}. Our goal is to devise an efficient semismooth Newton (SSN) method that exhibits at least superlinear convergence. For more details about semismoothness and SSN, see Appendix~\ref{appendix: sec2}. {Note that problem \eqref{eq: ppa_sub}} can be computed by solving the non-smooth equation $ \nabla \Phi_k(\xi)=0$, where

\[
\setlength{\abovedisplayskip}{1pt}  
  \setlength{\belowdisplayskip}{2pt}
\nabla \Phi_k(\xi) =  X {\rm vec}({\rm Prox}_{\sigma_k \phi}(B^k - \sigma_k {\rm mat}( X^{\intercal}\xi)))
+ Z {\rm Prox}_{\sigma_k \psi}(\gamma^k -\sigma_k Z^{\intercal}\xi) - {\rm Prox}_{\sigma_k h/\nu} \left(s^k +\frac{\sigma_k}{\nu} \xi\right).
\]

Define the multifunction $\hat{\partial }^2\Phi_k$ {as: for any $\xi \in \mathbb{R}^n$, $\hat{\partial }^2\Phi_k(\xi)$ is a multifunction from $\mathbb{R}^{n}$ to $\mathbb{R}^{n}$ such that for each $u\in \mathbb{R}^n$,
\begin{align}
\hat{\partial }^2\Phi_k(\xi)[u] \!\!:=\!\!
\left\{ \begin{aligned}
& \!\!-\!\!\sigma_k X {\rm vec}\left( \mathcal{W} \left( {\rm mat}(X^{\intercal}u)\right)\right) \\
&-\sigma_k Z \mathcal{Q} Z^{\intercal}u - \frac{\sigma_k}{\nu} \mathcal{M}u
\end{aligned} \ \middle\vert
\begin{aligned}
& \mathcal{M}\in \hat{\partial} {\rm Prox}_{\sigma_k h/\nu} (s^k \!+\!\frac{\sigma_k}{\nu} \xi)\\
&\mathcal{W}\in \hat{\partial} {\rm Prox}_{\sigma_k \phi}(B^k \!-\! \sigma_k {\rm mat}( X^{\intercal}\xi))\\
&\mathcal{Q}\in \hat{\partial} {\rm Prox}_{\sigma_k \psi}(\gamma^k \!-\!\sigma_k Z^{\intercal}\xi)
\end{aligned} \right\},\label{eq: jacobian}
\end{align}
where ${\rm vec}$ and ${\rm mat}$ are linear maps as defined at the end of the Section \ref{sec:intro}.} Here, we introduce $\hat{\partial} {\rm Prox}_f$ as a more computationally efficient alternative to the Clarke generalized Jacobian ${\partial} {\rm Prox}_f$, or even itself, associated with a given closed proper convex function $f:\mathbb{R}^n\rightarrow (-\infty,\infty]$, while ensuring Property A.
A multifunction $\hat{\partial} {\rm Prox}_{f}$ is said to satisfy Property A with respect to $ {\rm Prox}_{f}$, if (1) it is nonempty, compact valued, and upper-semicontinuous; (2) for any $x\in \mathbb{R}^n$, all the elements in $\hat{\partial} {\rm Prox}_{f}(x)$ are symmetric and positive semidefinite; (3) $ {\rm Prox}_{f}$ is strongly semismooth with respect to $\hat{\partial} {\rm Prox}_{f}$.

We provide the framework of the SSN for solving \eqref{eq: ppa_sub} in Algorithm~\ref{alg:semismoothNewton}.
In practice, one can choose the parameters as $\mu=10^{-4}, \tau=0.5, \eta=0.005$ and $\delta=0.5$.

\begin{algorithm}
\caption{A semismooth Newton method for solving \eqref{eq: ppa_sub}}
\label{alg:semismoothNewton}
\begin{algorithmic}[1]
\REQUIRE $\mu \in (0,0.5)$, $\tau \in (0,1]$,  $\eta\in (0,1)$, $\delta \in (0,1)$.
\ENSURE an approximate optimal solution $\xi^{k+1}$ to \eqref{eq: ppa_sub}.
\STATE \textbf{Initialization}: choose $\xi^{k,0} \in \mathbb{R}^n$, $j = 0$.
\REPEAT
\STATE \textbf{Step 1.} Select an element $\mathcal{H}_j \in \hat{\partial}^2 \Phi_k (\xi^{k,j})$. Apply the direct method or the conjugate gradient (CG) method to find an approximate solution $d^j \in \mathbb{R}^n$ to
\begin{equation}
    \mathcal{H}_jd^j \approx -\nabla \Phi_k(\xi^{k,j}), \label{eq: sub_ssn}
\end{equation}
such that
$\displaystyle
\|\mathcal{H}_jd^j + \nabla \Phi_k(\xi^{k,j})\| \leq \min(\eta, \|\nabla \Phi_k(\xi^{k,j})\|^{1+\tau}).
$
\STATE \textbf{Step 2.} Set $\alpha_j = \delta^{m_j}$, where $m_j$ is the smallest nonnegative integer $m$ for which
\[
\Phi_k(\xi^{k,j} + \delta^m d^j) \geq \Phi_k(\xi^{k,j}) + \mu \delta^m \langle \nabla \Phi_k(\xi^{k,j}), d^j \rangle.
  \setlength{\belowdisplayskip}{1pt}
\]
\STATE \textbf{Step 3.} Set $\xi^{k,j+1} = \xi^{k,j} + \alpha_j d^j$, $\xi^{k+1} = \xi^{k,j+1}$, $j \leftarrow j + 1$.
\UNTIL Stopping criterion
is satisfied.
\end{algorithmic}
\end{algorithm}

Theorem~\ref{thm:ssn_convergence} gives the superlinear convergence of the SSN method, which can be proved using the results of \citet[Proposition 3.3, Theorems 3.4 and 3.5]{zhao2010newton} and \citet[Theorem 4]{li2017highly}. For simplicity, we omit the proof here.

\begin{theorem}\label{thm:ssn_convergence}
Assume that the function $h$ in \eqref{eq: fct_h} is twice continuously differentiable, and
$\hat{\partial} {\rm Prox}_{\sigma_k \phi}$ and $\hat{\partial} {\rm Prox}_{\sigma_k \psi}$ satisfy Property A with respect to $ {\rm Prox}_{\sigma_k \phi}$ and $ {\rm Prox}_{\sigma_k \psi}$, respectively.
Let $\left\{\xi^{k, j}\right\}$ be the sequence generated by Algorithm~\ref{alg:semismoothNewton}. Then $\left\{\xi^{k, j}\right\}$ converges globally to the unique optimal solution
$\bar{\xi}^{k+1}$ of problem \eqref{eq: ppa_sub}.
Furthermore, the convergence rate is at least superlinear, i.e., for $j$ sufficiently large,
$
\left\|\xi^{k, j+1}-\bar{\xi}^{k+1}\right\|=O\left(\left\|\xi^{k, j}-\bar{\xi}^{k+1}\right\|^{1+{\tau}}\right),
$
where ${\tau} \in(0,1]$ is a given parameter in Algorithm~\ref{alg:semismoothNewton}.
\end{theorem}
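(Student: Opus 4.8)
The plan is to recast the maximization subproblem \eqref{eq: ppa_sub} as the problem of finding a zero of the nonsmooth gradient map $\nabla\Phi_k$, and then to invoke the semismooth Newton convergence theory of \citet{zhao2010newton} and \citet{li2017highly}. Two hypotheses must be verified before those results apply: (i) $\nabla\Phi_k$ is strongly semismooth with respect to the surrogate generalized Jacobian $\hat{\partial}^2\Phi_k$ defined in \eqref{eq: jacobian}; and (ii) every element of $\hat{\partial}^2\Phi_k(\xi)$ is nonsingular (in fact negative definite). Once these are in place, global convergence follows from the strong concavity of $\Phi_k$ together with the descent and line-search analysis, and the local superlinear rate follows from the standard nonsmooth Newton estimate.

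First I would verify the strong semismoothness in (i). Because $h$ in \eqref{eq: fct_h} is twice continuously differentiable, the optimality condition $s = u + \tfrac{\sigma_k}{\nu}\nabla h(u)$ defining ${\rm Prox}_{\sigma_k h/\nu}$ shows, via the inverse function theorem, that ${\rm Prox}_{\sigma_k h/\nu}$ is continuously differentiable with Jacobian $\mathcal{M} = (I + \tfrac{\sigma_k}{\nu}\nabla^2 h)^{-1}$; being $C^1$, it is strongly semismooth. By part (3) of Property A, ${\rm Prox}_{\sigma_k \phi}$ and ${\rm Prox}_{\sigma_k \psi}$ are strongly semismooth with respect to $\hat{\partial}{\rm Prox}_{\sigma_k \phi}$ and $\hat{\partial}{\rm Prox}_{\sigma_k \psi}$. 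Since $\nabla\Phi_k$ is obtained by composing these three maps with the linear maps $X,Z,{\rm vec},{\rm mat}$ and then summing, and since strong semismoothness is preserved under composition with linear maps and under addition, $\nabla\Phi_k$ is strongly semismooth with respect to $\hat{\partial}^2\Phi_k$.

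The crux is (ii). Fix $\xi$ and any $\mathcal{H}\in\hat{\partial}^2\Phi_k(\xi)$ with associated $\mathcal{W},\mathcal{Q},\mathcal{M}$. For $u\neq 0$, using that ${\rm vec}$ and ${\rm mat}$ are mutually adjoint, I would compute
\[
\langle u,\mathcal{H}u\rangle = -\sigma_k\langle {\rm mat}(X^{\intercal} u),\mathcal{W}\,{\rm mat}(X^{\intercal} u)\rangle - \sigma_k\langle Z^{\intercal} u,\mathcal{Q}Z^{\intercal} u\rangle - \tfrac{\sigma_k}{\nu}\langle u,\mathcal{M}u\rangle.
\]
By Property A(2) the matrices $\mathcal{W}$ and $\mathcal{Q}$ are symmetric positive semidefinite, so the first two terms are nonpositive; moreover $\mathcal{M} = (I + \tfrac{\sigma_k}{\nu}\nabla^2 h(\cdot))^{-1}\succ 0$, so the last term is strictly negative. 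Hence $\langle u,\mathcal{H}u\rangle < 0$ for all $u\neq 0$, i.e. every $\mathcal{H}$ is symmetric negative definite and therefore nonsingular. When $\nabla h$ is Lipschitz (as for the squared and square-root losses), $\mathcal{M}\succeq cI$ uniformly, which upgrades this to uniform strong concavity of $\Phi_k$; this guarantees that \eqref{eq: ppa_sub} has a unique maximizer $\bar{\xi}^{k+1}$, equivalently the unique zero of $\nabla\Phi_k$, and that the level sets of $\Phi_k$ are compact.

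With (i) and (ii) established, the conclusions assemble as follows. The negative definiteness of $\mathcal{H}_j$ makes the Newton direction $d^j$ from \eqref{eq: sub_ssn} a well-defined ascent direction, so the Armijo-type line search in Step~2 of Algorithm~\ref{alg:semismoothNewton} terminates; invoking \citet[Proposition 3.3, Theorem 3.4]{zhao2010newton} together with \citet[Theorem 4]{li2017highly} then yields global convergence of $\{\xi^{k,j}\}$ to $\bar{\xi}^{k+1}$. Finally, the nonsingularity of the elements of $\hat{\partial}^2\Phi_k(\bar{\xi}^{k+1})$ combined with the strong semismoothness of $\nabla\Phi_k$ places us exactly in the hypotheses of \citet[Theorem 3.5]{zhao2010newton}, giving the estimate $\|\xi^{k,j+1}-\bar{\xi}^{k+1}\| = O(\|\xi^{k,j}-\bar{\xi}^{k+1}\|^{1+\tau})$ for $j$ large, with the order $1+\tau$ inherited from the residual tolerance imposed in \eqref{eq: sub_ssn}. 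I expect the main obstacle to be step (ii): everything depends on extracting strict definiteness from a sum in which only the loss term $\mathcal{M}$ is positive definite while $\mathcal{W}$ and $\mathcal{Q}$ may be singular, so the argument must rely essentially on $h\in C^2$ and cannot be weakened to a merely Lipschitz loss.
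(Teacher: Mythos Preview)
Your proposal is correct and follows essentially the same route as the paper: the paper omits the proof and simply invokes \citet[Proposition 3.3, Theorems 3.4 and 3.5]{zhao2010newton} and \citet[Theorem 4]{li2017highly}, noting in the subsequent remark that the $C^2$ assumption on $h$ is what yields strict concavity of $\Phi_k$ and negative definiteness of all elements of $\hat{\partial}^2\Phi_k(\bar{\xi}^{k+1})$. You have supplied exactly the verifications those citations require---strong semismoothness of $\nabla\Phi_k$ with respect to $\hat{\partial}^2\Phi_k$ and negative definiteness via the $\mathcal{M}$-term---so your argument is a fleshed-out version of the paper's citation-only proof.
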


\begin{remark}
Here are some comments on
Theorem~\ref{thm:ssn_convergence}. First, when the B-subdifferential or Clarke generalized Jacobian of ${\rm Prox}_{\sigma_k \phi}$ (${\rm Prox}_{\sigma_k h/\nu}$, ${\rm Prox}_{\sigma_k \psi}$) is easy to derive, we can simply take $\hat{\partial} {\rm Prox}_{\sigma_k \phi} = \partial_B {\rm Prox}_{\sigma_k \phi}$ or $\hat{\partial} {\rm Prox}_{\sigma_k \phi} = \partial {\rm Prox}_{\sigma_k \phi}$.
Second, under the assumption that  $h$ is twice continuously differentiable, by \citet[Proposition 4.1]{lin2023highly}, we will have the strict concavity of the dual objective function $\Phi_k(\cdot)$, leading to the existence of a unique maximizer for problem \eqref{eq: ppa_sub} and the negative definiteness of all elements in $\hat{\partial }^2\Phi_k(\bar{\xi}^{k+1})$. In fact, we can also obtain similar convergence results when $h$ is a square root loss  under mild assumptions, see \cite{tang2020sparse}. Moreover, for twice continuously differentiable $h$, $\hat{\partial}{\rm Prox}_{\sigma_k h/\nu}$ reduces to one element $ \nabla {\rm Prox}_{\sigma_k h/\nu}$, and it naturally satisfies
Property A.
\end{remark}

For many commonly used penalty functions $\phi$ and $\psi$, their proximal mappings are strongly semismooth  with respect to their (surrogate) generalized Jacobian. We illustrate the application of our algorithm to a specific problem in the following subsection.

\subsection{Application on Low Rank and Sparse Regularized Matrix Regression Problems}
In this subsection, we consider the application of our algorithmic framework on  specific low-rank and sparse regularized matrix regression problems, where
\begin{align*}
h(u) \!=\! \frac{1}{2}\|u-y\|^2,\ u\in \mathbb{R}^n,\ \ \phi(B)\!=\! \rho \|B\|_*, \ B\in \mathbb{R}^{m\times q}, \ \  \psi(\gamma) \!=\! \lambda\|\gamma\|_1,\ \gamma \in \mathbb{R}^p.
\end{align*}
We emphasize that our algorithmic framework is quite general as long as
{the explicit formulae of $ {\rm Prox}_{\phi}$ and $ {\rm Prox}_{\psi}$
are available and their corresponding (surrogate) generalized Jacobians can be well constructed.}
The following list provides some examples of $\psi(\cdot)$, whose proximal mapping ${\rm Prox}_{\psi}(\cdot)$ along with the (surrogate) generalized Jacobian $\hat{\partial} {\rm Prox}_{\psi}(\cdot)$ are available in the literature.
Moreover, the proximal mapping ${\rm Prox}_{\psi}(\cdot)$ is shown to be strongly semismooth, with respect to {its} (surrogate) generalized Jacobian $\hat{\partial} {\rm Prox}_{\psi}(\cdot)$, enabling their integration into our algorithmic framework.

\begin{itemize}[itemsep=1pt, topsep=2pt, partopsep=0pt, parsep=0pt]
    \item Lasso regularizer \citep{tibshirani1996regression}:\\ $\psi(\gamma)=\lambda\|\gamma\|_1,\ \lambda > 0$.
The proximal mapping and its generalized Jacobian has been well-studied by  \citet[Section 3.3]{li2017highly}.

\item Fused lasso regularizer \citep{fusedlasso}:\\
$ \psi(\gamma)=\lambda \|\gamma\|_1 + \lambda' \sum_{i=1}^{p-1}|\gamma_i -\gamma_{(i+1)}|, \ \lambda > 0,\ \lambda' > 0$.
The proximal mapping and its generalized Jacobian has been well-studied by \citet[Section 3]{li2018fused}.

\item Sparse group lasso regularizer \citep{sgl1,sgl2}:\\
$
\psi(\gamma)=\lambda\|\gamma\|_1+\lambda' \sum_{l=1}^g w_l\left\|\gamma_{G_l}\right\|, \
\lambda>0,\ \lambda'>0, \ w_1, \dots, w_g \geq 0$, and $\left\{G_1, \cdots, G_g\right\}$ is a disjoint partition of the set $[p]$. It reduces to the group lasso regularizer when $\lambda=0$.
The proximal mapping and its generalized Jacobian has been well-studied by \citet[Proposition 2.1 \& Section 3]{Zhang_2018}.

\item Clustered lasso regularizer \citep{clusteredlasso1, clusteredlasso2}:\\
$\psi(\gamma)=\lambda\|\gamma\|_1+\lambda' \sum_{1 \leq i<j \leq p}\left|\gamma_i-\gamma_j\right|,\lambda>0,\lambda'>0$.  The proximal mapping and its generalized Jacobian has been well-studied by \citet[Section 2]{lin2019clustered}.

\item Exclusive lasso regularizer \citep{exclusivelasso1,exclusivelasso2}:\\
$
\psi(\gamma)=\lambda \sum_{l=1}^g\left\|w_{G_l} \odot \gamma_{G_l}\right\|_1^2, \ \lambda>0$,
$w=(w_{G_1}, \cdots, w_{G_g}) \in {\mathbb{R}_{++}^p}$ is a weight vector
and $\left\{G_1, \cdots, G_g\right\}$ is a disjoint partition of the index set $[p]$. The proximal mapping and its generalized Jacobian has been well studied by \citet[Section 3]{lin2023highly}.

\item Decreasing
weighted sorted $\ell_1$-norm (DWSL1) regularizer, also known as sorted $\ell$-one penalized estimation (SLOPE) regularizer \citep{slope}:\\
$
\psi(\gamma)=\sum_{i=1}^p\mu_i|\gamma|_{(i)}, \mu_1 \geq \cdots \geq \mu_p \geq 0$ and $\mu_1>0$. Here $|\gamma|_{(i)}$ is the $i$-th largest component of $|\gamma|$ such that $|\gamma|_{(1)} \geq \cdots \geq|\gamma|_{(p)}$ and $|\gamma|\in \mathbb{R}^p$ is   obtained from $\gamma$ by taking the absolute value of its components. The proximal mapping and its generalized Jacobian has been well-studied by \citet[Section 2]{luo2018solving}. Note that Octagonal Shrinkage and Clustering Algorithm for
Regression (OSCAR) regularizer $\psi(\gamma)=\lambda\|\gamma\|_1+\lambda' \max _{i<j}\left\{\left|\gamma_i\right|,\left|\gamma_j\right|\right\}$ is a special case of the DWSL1 regularizer.
\end{itemize}

Without loss of generality, we assume that $m \leq q$. The next proposition summarizes the formulae for the proximal mapping ${\rm Prox}_{\rho \|\cdot\|_*}$ and its Clarke generalized Jacobian $\partial {\rm Prox}_{\rho \|\cdot\|_*}$.

\begin{proposition}\label{prop:1}
Let $\rho>0$ and $Y\in \mathbb{R}^{m\times q}$ admit the singular value decomposition $$
Y = U[{\rm Diag}(\sigma_1,\dots,\sigma_m)\ 0] V^{\intercal} =  U[{\rm Diag}(\sigma_1,\dots,\sigma_m)\ 0] [V_1 \ V_2]^{\intercal},
$$
where $U\in \mathbb{R}^{m\times m}$, $V{= [V_1  \  V_2]}\in \mathbb{R}^{q\times q}$ are orthogonal matrices, $V_1 \in \mathbb{R}^{q\times m}$, $V_2 \in \mathbb{R}^{q\times (q-m)}$, and $\sigma_1\geq \sigma_2\geq \cdots\geq \sigma_m\geq 0$ are the singular values of $Y$. Then we have the following conclusions.
\begin{enumerate}[label=(\alph*)]	
\item The proximal mapping associated with $\rho \|\cdot\|_*$ at $Y$ can be computed as
\begin{align*}
{\rm Prox}_{\rho \|\cdot\|_*}(Y) = U[{\rm Diag}((\sigma_1 - \rho)_+,\dots,(\sigma_m - \rho)_+)\ 0] V^{\intercal}.
\end{align*}

\item The function ${\rm Prox}_{\rho \|\cdot\|_*}(\cdot)$ is strongly semismooth everywhere in $\mathbb{R}^{m\times q}$ with respect to $\partial {\rm Prox}_{\rho \|\cdot\|_*}(\cdot)$.

\item Define the index sets
\begin{align*}
&\alpha = \{1,\cdots,m\},\quad \gamma = \{m+1,\cdots,2m\},\quad \beta = \{2m+1,\cdots,m+q \},\\
&\alpha_1 = \{i\in [m] \mid \sigma_i \!>\! \rho\},\quad \alpha_2 = \{i\in [m] \mid \sigma_i \!=\! \rho\},\quad
\alpha_3 = \{i\in [m] \mid \sigma_i \!<\! \rho\}.
\end{align*}
We can construct one element $\mathcal{W}\in \partial {\rm Prox}_{\rho \|\cdot\|_*}(Y)$, where  $\mathcal{W}:\mathbb{R}^{m\times q}\rightarrow \mathbb{R}^{m\times q}$ is defined as
\begin{align*}
\mathcal{W}(H) \!=\! U\left[ \left( \Gamma_{\alpha \alpha} \!\odot\! \left( \frac{H_1\!\!+\!\!H_1^{\intercal}}{2}\right) \!+\! \Gamma_{\alpha \gamma}\! \odot \!\left( \frac{H_1\!\!-\!\!H_1^{\intercal}}{2}\right) \right) V_1^{\intercal} \!+\! \left( \Gamma_{\alpha \beta}\!\odot\! H_2\right)V_2^{\intercal} \right],
\end{align*}
for $H\in \mathbb{R}^{m\times q}$. Here $H_1 = U^{\intercal} H V_1$, $H_2 = U^{\intercal} H V_2$,  $\Gamma_{\alpha \alpha}\in \mathbb{S}^{m}$, $\Gamma_{\alpha \gamma}\in \mathbb{S}^{m}$, $\Gamma_{\alpha \beta}\in \mathbb{R}^{m\times (q-m)}$ are defined as
\begin{align*}
\Gamma_{\alpha \alpha}^0  \!\!=\!\! \begin{pmatrix}
1_{\alpha_1 \alpha_1} & \!\!\!\! 1_{\alpha_1 \alpha_2} & \!\!\!\!\tau_{\alpha_1 \alpha_3} \\
1_{\alpha_2 \alpha_1} & 0 & 0\\
\tau_{\alpha_1 \alpha_3}^{\intercal} & 0 & 0
\end{pmatrix}, \; \Gamma_{\alpha \gamma} \!\!=\!\! \begin{pmatrix}
\omega_{\alpha_1\alpha_1} & \!\!\!\!\omega_{\alpha_1\alpha_2} & \!\!\!\!\omega_{\alpha_1\alpha_3}\\
\omega_{\alpha_1\alpha_2}^{\intercal}  & 0 & 0\\
\omega_{\alpha_1\alpha_3}^{\intercal}  & 0 & 0
\end{pmatrix},\;
\Gamma_{\alpha \beta} \!\!=\!\! \begin{pmatrix}
\mu_{\alpha_1 \bar{\beta}} \\
0
\end{pmatrix},
\end{align*}
with $\tau_{\alpha_1 \alpha_3}\in \mathbb{R}^{|\alpha_1|\times |\alpha_3|}$, $\omega_{\alpha_1\alpha}\in \mathbb{R}^{|\alpha_1|\times m}$, $\mu_{\alpha_1 \bar{\beta}}\in \mathbb{R}^{|\alpha_1|\times (q-m)}$ defined as
\begin{align*}
&\tau_{ij} = \frac{\sigma_i - \rho}{\sigma_i -\sigma_j}, \quad \mbox{for } i \in \alpha_1,\ j\in \alpha_3,\\
&\omega_{ij} = \frac{\sigma_i-\rho + (\sigma_j-\rho)_{+}}{\sigma_i+\sigma_j},\quad \mbox{for } i \in \alpha_1,\ j\in \alpha,\\
& \mu_{ij} = \frac{\sigma_i-\rho}{\sigma_i},\quad \mbox{for } i\in \alpha_1,\ j\in \bar{\beta} :=  \{1,\cdots,q-m\}.
\end{align*}
\end{enumerate}
\end{proposition}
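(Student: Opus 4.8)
The overarching strategy is to view ${\rm Prox}_{\rho\|\cdot\|_*}$ as a spectral operator acting on singular values, so that every claim reduces to a property of the scalar soft-thresholding map $f(\sigma)=(\sigma-\rho)_+$. For part (a), I would start from the optimality condition $Y-X\in\rho\,\partial\|X\|_*$ characterizing $X={\rm Prox}_{\rho\|\cdot\|_*}(Y)$, and invoke von Neumann's trace inequality $\langle X,Y\rangle\le\sum_i\sigma_i(X)\sigma_i(Y)$, whose equality case forces any minimizer of $\rho\|X\|_*+\tfrac12\|X-Y\|_F^2$ to share the singular vectors $U,V$ of $Y$. The objective then separates into the scalar problems $\min_{x\ge 0}\{\rho x+\tfrac12(x-\sigma_i)^2\}$, each solved by $x_i=(\sigma_i-\rho)_+$, which yields the stated formula. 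This step is short and standard.

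For part (b), the point is that $f$ is globally Lipschitz and piecewise affine, hence strongly semismooth. Since ${\rm Prox}_{\rho\|\cdot\|_*}$ is exactly the spectral operator generated by $f$ (equivalently, by the odd extension $t\mapsto {\rm sgn}(t)(|t|-\rho)_+$), I would invoke the theory of spectral operators of matrices, by which such an operator inherits strong semismoothness with respect to its Clarke generalized Jacobian from the strong semismoothness of its generating scalar map. This gives (b) directly.

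Part (c) is the technical core and the main obstacle. The cleanest route I would take is the symmetric dilation $A=\left(\begin{smallmatrix}0&Y\\Y^\intercal&0\end{smallmatrix}\right)$, whose eigenvalues are $\pm\sigma_1,\dots,\pm\sigma_m$ together with $q-m$ zeros; under this embedding ${\rm Prox}_{\rho\|\cdot\|_*}$ becomes the L\"owner operator associated with the odd extension of $f$, and its generalized Jacobian is governed by the Daleckii--Krein formula with the divided differences $\frac{g(\lambda_i)-g(\lambda_j)}{\lambda_i-\lambda_j}$ of the eigenvalues. Matching the three types of eigenvalue pairs reproduces exactly the three blocks: pairs $(\sigma_i,\sigma_j)$ give $\Gamma_{\alpha\alpha}$ (whence $\tau_{ij}=\frac{\sigma_i-\rho}{\sigma_i-\sigma_j}$), pairs $(\sigma_i,-\sigma_j)$ give the antisymmetric block $\Gamma_{\alpha\gamma}$ (whence $\omega_{ij}=\frac{\sigma_i-\rho+(\sigma_j-\rho)_+}{\sigma_i+\sigma_j}$), and pairs $(\sigma_i,0)$ give the rectangular block $\Gamma_{\alpha\beta}$ (whence $\mu_{ij}=\frac{\sigma_i-\rho}{\sigma_i}$); the splitting of the transformed direction into the symmetric and antisymmetric parts $\tfrac12(H_1\pm H_1^\intercal)$ and the $V_2$-part $H_2$ is precisely the decomposition of a perturbation of $A$ into its eigenspace blocks. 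The hard part is the passage to nondifferentiable configurations: when a singular value equals $\rho$ (the index set $\alpha_2$), $f$ has a kink, and when singular values coincide or vanish the SVD is not differentiable, so the plain Daleckii--Krein formula no longer applies. Here I would use the characterization of the Clarke generalized Jacobian of a L\"owner operator at nonsmooth eigenvalues, which replaces $g'$ at a kink by an element of $\partial g(\rho)=[0,1]$; the selection taking the value $0$ throughout the blocks touching $\alpha_2$ corresponds to choosing the left derivative $g'(\rho^-)=0$, and I would verify that the resulting operator is a genuine element of $\partial{\rm Prox}_{\rho\|\cdot\|_*}(Y)$ by exhibiting it as a limit of Fr\'echet derivatives at nearby matrices with simple singular values bounded away from $\rho$ (concretely, perturbing the $\alpha_2$ values just below $\rho$ and breaking ties). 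Finally, since ${\rm Prox}_{\rho\|\cdot\|_*}={\rm id}-\nabla{\rm E}_{\rho\|\cdot\|_*}$ is the gradient map of a convex $C^{1,1}$ function, every element of its generalized Jacobian is automatically symmetric and positive semidefinite, so the constructed $\mathcal W$ meets the requirements of Property A; the delicate points to check are that the averaging $\tfrac12(H_1\pm H_1^\intercal)$ correctly enforces the orthogonality constraints on the perturbed singular vectors and that the claimed limit is indeed attained.
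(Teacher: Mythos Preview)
Your proposal is correct and follows exactly the route underlying the references the paper cites: the paper's own proof consists solely of pointers---part (a) to Cai--Cand\`es--Shen and Ma--Goldfarb--Chen, part (b) to Jiang--Sun--Toh, and part (c) to Yang's thesis together with the inclusion $\partial_B{\rm Prox}_{\rho\|\cdot\|_*}(Y)\subseteq\partial{\rm Prox}_{\rho\|\cdot\|_*}(Y)$---and what you have sketched (von Neumann reduction to scalar soft-thresholding, spectral-operator inheritance of strong semismoothness, and the symmetric dilation plus Daleckii--Krein divided differences with a limiting argument at the kinks) is precisely the content of those references. The only remark is that the proposition asks merely for \emph{one} element of $\partial{\rm Prox}_{\rho\|\cdot\|_*}(Y)$, so your closing discussion of Property~A, while relevant to the paper's algorithmic framework, goes beyond what is needed here.
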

\begin{proof}
(a) follows from \citep{cai2010singular,ma2011fixed}.  (b) follows from \citep[Theorem 2.1]{jiang2014partial}.
(c)  follows from \citep[Lemma~2.3.6, Proposition~2.3.7]{yang2009study} and the fact that $$\partial_B {\rm Prox}_{\rho \|\cdot\|_*}(Y)\subseteq \partial {\rm Prox}_{\rho \|\cdot\|_*}(Y) = {\rm conv}(\partial_B {\rm Prox}_{\rho \|\cdot\|_*}(Y))$$ for any $Y\in \mathbb{R}^{m\times q}$.
\end{proof}
In addition to the above formulae, we need the following standard results for constructing one element in the set \eqref{eq: jacobian}:
\begin{align*}
&{\rm Prox}_{\sigma_k h/\nu} (u)   = \frac{u + \frac{\sigma_k}{\nu} y}{1+ \frac{\sigma_k}{\nu}},\quad \nabla{\rm Prox}_{\sigma_k h/\nu} (u) = \frac{1}{1+ \frac{\sigma_k}{\nu}}I_n, \\
&({\rm Prox}_{\sigma_k \psi} (\gamma))_i = {\rm sgn}(\gamma_i) (|\gamma_i| - \sigma_k \lambda)_+,\ i\in [p],  \\
&\partial {\rm Prox}_{\sigma_k \psi}(\gamma) =
\left\{ {\rm Diag}(w)\middle \vert \begin{aligned}
&w_i = 0 && \mbox{if } |\gamma_i| <\sigma_k \lambda, \\
&w_i \in [0,1] && \mbox{if } |\gamma_i| = \sigma_k \lambda, \\
&w_i = 1 && \mbox{if } |\gamma_i| > \sigma_k \lambda,
\end{aligned} \ i\in [p]
\right\}.
\end{align*}
Therefore, with one element $\mathcal{W}\in \partial {\rm Prox}_{\sigma_k\rho \|\cdot\|_*}(B^k - \sigma_k {\rm mat}( X^{\intercal}\xi))$ constructed via Proposition~\ref{prop:1}, we can construct one element {$\mathcal{H}:\mathbb{R}^n \rightarrow \mathbb{R}^n$ in the set $\hat{\partial }^2\Phi_k(\xi)$ such that for each $u\in \mathbb{R}^n$:
\begin{align*}
\mathcal{H}u  := & -\sigma_k X {\rm vec}( \mathcal{W} ( {\rm mat}(X^{\intercal}u))) -\sigma_k Z D Z^{\intercal}u- \frac{\sigma_k}{\nu + \sigma_k} u \\
= & -\sigma_k X {\rm vec}( \mathcal{W} ( {\rm mat}(X^{\intercal}u))) -\sigma_k Z_{\mathcal{A}} Z^{\intercal}_{\mathcal{A}}u- \frac{\sigma_k}{\nu + \sigma_k} u,
\end{align*}}
where $D\in \mathbb{S}^p$ is a diagonal matrix with its $(i,i)$-th element being $1$ if $i\in \mathcal{A} :=\{i \in [p]\mid |(\gamma^k -\sigma_k Z^{\intercal}\xi)_i| > \sigma_k \lambda \}$ and $0$ otherwise, and $Z_{\mathcal{A}}$ is the matrix consisting of the columns
of $Z$ indexed by $\mathcal{A}$. This reflects the second-order sparsity.

\section{Numerical Experiments}\label{sec:numerical}

In this section, we conduct extensive numerical experiments with two objectives. First, in Section~\ref{sec:modelcomparision}, we assess and compare the empirical performances, in terms of prediction and estimation accuracy, of various regularized models in the form of \eqref{eq: p} that employ different penalty functions $\phi$ and $\psi$.
This also shows the practical and general applicability of the PPDNA, accommodating  penalty functions of various forms.
Second, in Section~\ref{sec:efficiency}, we evaluate and contrast the efficiency of different optimization algorithms in terms of computational time, which aims to highlight the superior efficiency of the PPDNA among other state-of-the-art algorithms. {We take the squared loss function in \eqref{eq: p}.} All experiments were conducted using MATLAB 2019b on a computer with an i7-860, 2.80 GHz CPU, and 16 GB of RAM.

\subsection{Comparison of Empirical Performance}\label{sec:modelcomparision}

We evaluate the empirical performances of various estimators obtained from \eqref{eq: p}, each corresponding to a specific combination of penalty functions $\phi(B)$ and $\psi(\gamma)$. These penalties are as follows:
(i)  $\rho \|\operatorname{vec}(B)\|_1 + \lambda \|\gamma\|_1$, referred to as VML,  where the estimator is obtained by applying the lasso penalty after vectorizing the matrix variable;
(ii) $\rho \|B\|_* + \lambda \|\gamma\|_1$, referred to as NL as it includes the nuclear norm and lasso penalty;
(iii) $ \rho \|B\|_* + \lambda \|\gamma\|_1 + \lambda' \sum_{i=1}^{p-1}|\gamma_i -\gamma_{i+1}|$, referred to as NFL as it includes the nuclear norm and fused lasso penalty;
(iv) $\rho \|B\|_*+  \lambda \|\gamma\|_1 + \lambda' \sum_{l=1}^{g}\sqrt{w_l}\|\gamma_{G_l}\|_2$, referred to as NSGL as it includes the nuclear norm and sparse group lasso penalty, where $w_l>0$ is the weight for the $l$-th group,  $G_l$'s denote the group index sets forming a partition of $[p]$, and $\gamma_{G_l}$ is the subvector of $\gamma$ restricted to $G_l$.

In Section~\ref{sec:2d}, we explore on various two-dimensional geometrically shaped true matrix coefficient $B$ following \citet[Section~5.1]{zhou2014regularized}. Additionally, in Section~\ref{sec:model_syntheticdata}, we simulate on true matrix coefficient $B$ with varying ranks and non-sparsity levels (the percentage of nonzero elements) following \citet[Section~5.2]{zhou2014regularized}. As a given information for NSGL estimators, we construct the group structure simply by dividing the index set $[p]$ into adjacent groups $G_1,\dots,G_g$ with an approximately equal group size of around $p/g$.
The true vector coefficient $\gamma$ is generated using one of the following three schemes, each designed to exhibit specific characteristics:
\begin{enumerate}[itemsep=1pt, topsep=2pt, partopsep=0pt, parsep=0pt]
    \item[(S1)] Sparsity: we set $g=10$ and randomly choose an element in each group of $\gamma$ to be $5$, while the rest are $0$;
    \item[(S2)] Local constancy: we set $g=10$ and assign the first ten elements in each group of $\gamma$ to be $1$, with the remaining elements {being} $0$;
    \item[(S3)] Group structure: we set $g=20$. For the first ten groups of $\gamma$,  we alternate the first ten elements in each group between $1$ and $-1$, that is, $(1,-1,1,-1,\dots, 1, -1)$, with the remaining elements {being} $0$.
\end{enumerate}

We choose the tuning parameters to be
\begin{align}\label{eq: regset}
    \rho=\alpha_1\left\|{\rm mat}(X^{\intercal} y)\right\|_2,
    \qquad \lambda = \lambda' = \alpha_2\left\|Z^{\intercal} y\right\|_{\infty},
\end{align}
for NL, NFL and NSGL, and
\begin{equation*}
    \rho = \alpha_3\|X^{\intercal}y\|_{\infty}, \qquad \lambda=\alpha_4\left\|Z^{\intercal} y\right\|_{\infty},
\end{equation*}
for VML. Here, $0<\alpha_1, \alpha_2, \alpha_3, \alpha_4<1$. 
{More discussions on the selection of $\lambda$ and $\lambda'$ are given in Appendix~\ref{appendix: lambda}. On a training set of size $300$,} we search a large grid of values of $\alpha_1, \alpha_2, \alpha_3, \alpha_4$ in the range of $10^{-3}$ to $1$ with 20 equally divided grid points on the {$\log_{10}$} scale, and then select the parameters yielding the best predicting root-mean-square error (RMSE) on a validation data with a large sample size { of $3000$. Lastly, a testing data set of size $300$ serves to evaluate the chosen model's generalization ability on unseen data.  Similar procedure is also adopted by \citet[Section 5.1]{Li2020linearized}.}

To compare the empirical performances of different estimators, we consistently apply our proposed PPDNA for solving problem \eqref{eq: p} and derive the corresponding estimators. Specifically, we initialize the algorithm with {zeros} and terminate it when either the KKT residual is sufficiently small
\begin{equation}\label{kkt:stop6}
\eta_{\rm KKT} :=\max \{R_p, R_d, R_c\}< 10^{-6},
\end{equation}
or the number of iterations reaches $100$, where the formulae for $R_p$, $R_d$ and $R_c$ are in \eqref{kkt-res}.

We assess the empirical performance of our method from two aspects: parameter estimation and prediction {accuracy} based on $100$ repetitions. For the former, we compute the RMSE of $B$ and $\gamma$ {based on the estimated $(B^*,\gamma^*)$,} denoted as Error-$B$ and Error-$\gamma$. For the latter, we use {the testing data set} to evaluate the prediction error measured by the RMSE of the response $y$, denoted as RMSE-$y$.  Specifically,
$$
\text{RMSE-}y=\frac{ { \|y_{\rm test}-y^*_{\rm test}\|} }{\sqrt{n}},\quad\text{Error-}B=\frac{ { \|B-B^*\|_F}}{\sqrt{mq}}, \quad\text{Error-}\gamma=\frac{\|\gamma-\gamma^*\|}{\sqrt{p}},
$$
where $B$ and $\gamma$ are the true matrix and vector coefficients, respectively, $B^*$ and $\gamma^*$ are the corresponding estimated values, and $y^*_{\rm test}= X_{\rm test}{\rm vec}(B^*)+{Z_{\rm test}}\gamma^*$ with $\{(y_{\rm test},X_{\rm test},{Z_{\rm test})\}}$ being the testing data set.

\subsubsection{Two Dimensional Shapes}\label{sec:2d}

We set the problem dimensions as follows: $m=64$, $q=64$, and $p=1000$, with a sample size of $n=300$. The true matrix coefficient $B\in\{0,1\}^{m\times q}$ is binary, and its true signal region forms a two dimensional shape (square, triangle, circle, heart, {see Appendix \ref{sec:2d_appendix}}). The true vector coefficient $\gamma \in \mathbb{R}^p$ is generated using schemes (S1), (S2), and (S3). To generate the data, we randomly sample $\{ (X_i,z_i) \in\mathbb{R}^{m\times q} \times \mathbb{R}^p, i\in [n] \}$ such that each entry follows a standard normal distribution. The response $y_i\in \mathbb{R}$ also follows a normal distribution with mean $\langle X_i,B \rangle + \langle z_i,\gamma \rangle $ and variance $1$.

\begin{table}[!ht]
\centering
\caption{Comparison of the four estimators VML, NL, NFL, and NSGL {of} different true coefficients $B$ and $\gamma$, based on $100$ replications. Standard deviations are provided in parentheses below. The best value is highlighted in bold.}
{
\setlength{\tabcolsep}{1pt}
\begin{adjustbox}{width=0.99\linewidth}
\begin{tabular}{cccccccccccccccccc}
\hline
 &   & \multicolumn{4}{c}{RMSE-$y$} & &\multicolumn{4}{c}{Error-$B$}& & \multicolumn{4}{c}{Error-$\gamma$} \\
\cmidrule(r){3-6}  \cmidrule(r){8-11} \cmidrule(r){13-16}
$B$ & $\gamma$ & VML & NL & NFL & NSGL &  & VML & NL & NFL & NSGL&  & VML & NL & NFL & NSGL\\
\midrule
Square & S1 & 16.81 & \textbf{6.58} & 7.98 & 17.98 & & 0.23 & \textbf{0.09} & 0.11 & 0.19 & & 0.24 & \textbf{0.09} & 0.13 & 0.42 \\
       &    & (0.81) & (1.33) & (1.47) & (0.84) & & (0.00) & (0.02) & (0.02) & (0.01) & & (0.03) & (0.03) & (0.03) & (0.01) \\
       & S2 & 18.00 & 14.35 & \textbf{10.91} & 14.30 & & 0.23 & 0.16 & \textbf{0.14} & 0.17 & & 0.32 & 0.31 & \textbf{0.21} & 0.30 \\
       &    & (0.69) & (0.80) & (1.20) & (0.82) & & (0.00) & (0.01) & (0.02) & (0.01) & & (0.00) & (0.01) & (0.02) & (0.01) \\
       & S3 & 17.92 & 14.47 & 14.60 & \textbf{13.74} & & 0.23 & 0.17 & 0.17 & \textbf{0.16} & & 0.32 & 0.31 & 0.32 & \textbf{0.29} \\
       &    & (0.73) & (0.81) & (0.75) & (0.77) & & (0.00) & (0.01) & (0.01) & (0.01) & & (0.00) & (0.01) & (0.00) & (0.01) \\
\addlinespace
Triangle & S1 & 12.72 & \textbf{9.60} & 10.34 & 16.76 & & 0.18 & \textbf{0.13} & 0.14 & 0.17 & & 0.19 & \textbf{0.14} & 0.17 & 0.41 \\
         &    & (0.64) & (0.73) & (0.78) & (0.64) & & (0.00) & (0.01) & (0.01) & (0.01) & & (0.02) & (0.02) & (0.02) & (0.01) \\
         & S2 & 15.09 & 13.97 & \textbf{11.78} & 13.99 & & 0.18 & 0.16 & \textbf{0.15} & 0.16 & & 0.31 & 0.31 & \textbf{0.23} & 0.30 \\
         &    & (0.59) & (0.62) & (0.74) & (0.68) & & (0.00) & (0.01) & (0.01) & (0.01) & & (0.01) & (0.01) & (0.02) & (0.00) \\
         & S3 & 15.06 & 13.99 & 14.20 & \textbf{13.87} & & 0.18 & \textbf{0.16} & \textbf{0.16} & \textbf{0.16} & & 0.31 & 0.31 & 0.32 & \textbf{0.30} \\
         &    & (0.56) & (0.67) & (0.64) & (0.73) & & (0.00) & (0.01) & (0.01) & (0.01) & & (0.00) & (0.00) & (0.00) & (0.01) \\
\addlinespace
Circle & S1 & 11.12 & \textbf{7.84} & 8.60 & 16.19 & & 0.15 & \textbf{0.11} & \textbf{0.11} & 0.16 & & 0.16 & \textbf{0.11} & 0.14 & 0.40 \\
       &    & (0.56) & (0.69) & (0.83) & (0.66) & & (0.00) & (0.01) & (0.01) & (0.00) & & (0.02) & (0.02) & (0.02) & (0.01) \\
       & S2 & 14.06 & 13.29 & \textbf{9.94} & 12.86 & & 0.16 & 0.14 & \textbf{0.12} & 0.14 & & 0.31 & 0.30 & \textbf{0.19} & 0.29 \\
       &    & (0.57) & (0.69) & (0.79) & (0.66) & & (0.00) & (0.01) & (0.01) & (0.01) & & (0.00) & (0.01) & (0.02) & (0.01) \\
       & S3 & 14.06 & 13.52 & 13.36 & \textbf{12.53} & & 0.16 & 0.15 & \textbf{0.14} & \textbf{0.14} & & 0.31 & 0.31 & 0.32 & \textbf{0.28} \\
       &    & (0.55) & (0.64) & (0.59) & (0.62) & & (0.00) & (0.01) & (0.01) & (0.01) & & (0.01) & (0.01) & (0.00) & (0.01) \\
\addlinespace
Heart & S1 & 12.59 & \textbf{9.21} & 9.78 & 17.02 & & 0.17 & \textbf{0.13} & \textbf{0.13} & 0.17 & & 0.18 & \textbf{0.14} & 0.16 & 0.42 \\
      &    & (0.65) & (0.81) & (0.95) & (0.88) & & (0.00) & (0.01) & (0.01) & (0.01) & & (0.03) & (0.02) & (0.03) & (0.01) \\
      & S2 & 14.97 & 14.01 & \textbf{11.25} & 13.50 & & 0.18 & 0.15 & \textbf{0.14} & 0.15 & & 0.31 & 0.31 & \textbf{0.22} & 0.30 \\
      &    & (0.58) & (0.73) & (0.84) & (0.65) & & (0.00) & (0.01) & (0.01) & (0.01) & & (0.01) & (0.01) & (0.02) & (0.00) \\
      & S3 & 15.10 & 13.78 & 14.07 & \textbf{13.51} & & 0.18 & \textbf{0.15} & \textbf{0.15} & \textbf{0.15} & & 0.32 & 0.31 & 0.32 & \textbf{0.29} \\
      &    & (0.66) & (0.73) & (0.71) & (0.73) & & (0.00) & (0.01) & (0.01) & (0.01) & & (0.00) & (0.01) & (0.00) & (0.01) \\
\hline
\end{tabular}
\end{adjustbox}
}
\label{table:2d}
\end{table}

In Table~\ref{table:2d}, we present the RMSE-$y$, Error-$B$, and Error-$\gamma$ of the four estimators VML, NL, NFL, and NSGL. These estimators are evaluated under various scenarios where the true coefficients $B$ and $\gamma$ are generated differently. From Table~\ref{table:2d}, we observe that the VML estimator consistently fails to achieve the best performance. This observation suggests that the nuclear norm penalty function is more suitable than the {element-wise} $\ell_1$ norm penalty function for $B$, especially for certain shapes in this context. Moreover, when the true vector coefficient is generated using scheme (S1), the NL estimator demonstrates the best performances in {all} cases. Similarly, for scheme (S2) (and (S3)), the NFL (and NSGL) estimator generally exhibits {the} best performance. We may conclude that the empirical performance of the regularized regression model \eqref{eq: p} heavily depends on the selection of appropriate penalty functions to capture the characteristics of the true coefficients. Our results also demonstrate the versatility of our PPDNA for solving \eqref{eq: p} with various penalties.

For illustrative purposes, we display in Figure~\ref{fig:2d} the true coefficients and the estimated coefficients by VML, NL, NFL, and NSGL for a square shaped $B$. We can see from the second column of Figure~\ref{fig:2d} that the VML estimator exhibits the least effective visual recovery. Notably, in the second
row of Figure~\ref{fig:2d}, the NFL estimator demonstrates the best recovery of truth, while in the third
row the NSGL estimator achieves the best recovery.

\begin{figure}[!ht]
\includegraphics[width=1\linewidth]{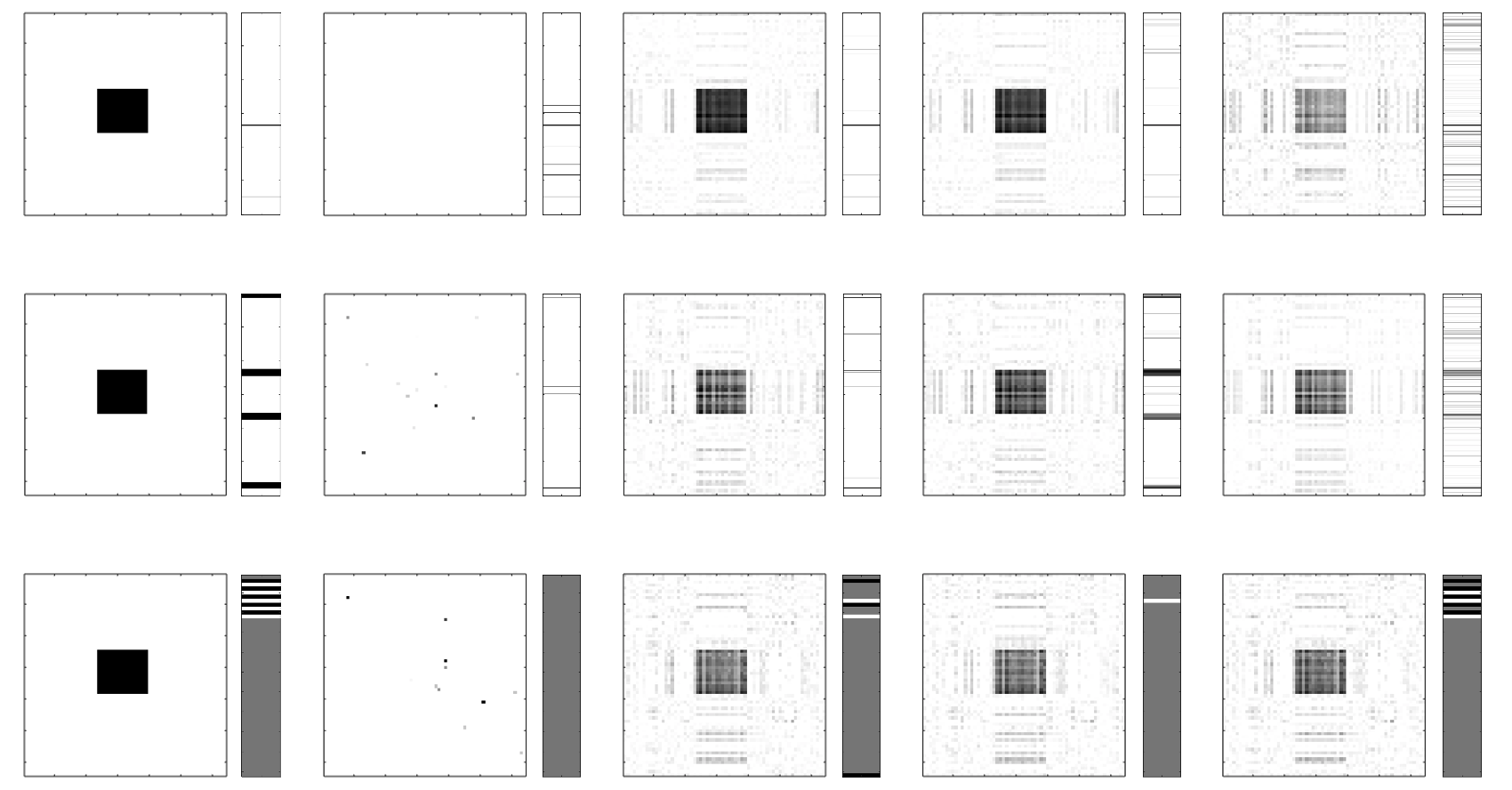} 
 \text{\space}~~~~~~Truth~~~~~~~~~~~~~~~~~VML~~~~~~~~~~~~~~~~~~~NL~~~~~~~~~~~~~~~~~~~~NFL~~~~~~~~~~~~~~~~~NSGL~~~
\caption{Comparison of the four estimators VML, NL, NFL, and NSGL under square shaped $B$ and different $\gamma$ generating schemes  (S1), (S2), and (S3) from top to bottom rows. In each subgraph, the left square depicts the matrix coefficients, while the right rectangle depicts the vector coefficients. For clarity, segment from 100 to 400 of estimated vector coefficients {is} displayed for (S1) and (S2), while for (S3), the segment {is} from 210 to 260 (5th group). For every estimated matrix coefficients, we set the color limit to be 0 and 1, that is, entries with non-positive values are mapped to white, entries with values greater than 1 are mapped to black, and entries with values between 0 and 1 are uniformly mapped to a color scale ranging from white to black. For estimated vector coefficients, we set the color limit to be 0 and 1 for (S1) and (S2), while for (S3), we map entries whose absolute values are smaller than 0.2 to gray, and the entries that are greater than $0.2$ to black, and those less than $-0.2$ to white to highlight the group structure.}
\label{fig:2d}
\end{figure}

\subsubsection{Synthetic Data}\label{sec:model_syntheticdata}
We set the problem dimensions as follows: $m = 50$, $q = 50,$ and $p=1000$, with a sample size of $n=300$. We generate the true matrix coefficient $B$ as the product of two matrices $B = B_1 B_2^{\intercal}$, where $ B_1 \in\mathbb{R}^{m \times r}$, $ B_2 \in{\mathbb{R}^{q \times r}}$, and $r$ controls the rank of $B$. Each entry of $B_1$ and $B_2$ is independently generated from a Bernoulli distribution with probability of one equal to $\sqrt{1-(1-s)^{1/r}}$. We can deduce that the non-sparsity level (the percentage of nonzero elements) of $B$ is expected to be $s$. We choose the rank $r = 1,2,4$, and the non-sparsity level $s=0.1,0.2,0.4$. The true vector coefficient $\gamma$ and samples $(y_i,X_i,z_i)$ are generated in the same manner as described in  Section~\ref{sec:2d}.

Tables~\ref{table:synthetic_new_data_sparse}, \ref{table:synthetic_new_data_fused}, and \ref{table:synthetic_new_data_group}   present the  RMSE-$y$, Error-$B$, and Error-$\gamma$ of the four estimators VML, NL, NFL, and NSGL evaluated under various scenarios. These scenarios involve different ranks $r$ and non-sparsity levels $s$ for the true coefficients $B$, while $\gamma$ is generated using schemes (S1), (S2), and (S3).
From Tables~\ref{table:synthetic_new_data_sparse} to \ref{table:synthetic_new_data_group}, several observations can be made {as follows.} First, the VML estimator has the highest RMSE-$y$, Error-$B$, Error-$\gamma$ in all scenarios, which implies the inferiority of the $\ell_1$ penalty $\|{\rm vec}(B)\|_1$ compared to the nuclear norm penalty $\|B\|_*$. Second, when $\gamma$ is generated using scheme (S1), the NL estimator consistently demonstrates the lowest error in the estimation of  $B$, $\gamma$ and in the prediction of $y$. Similarly, the NFL estimator has the lowest error in most cases in Table~\ref{table:synthetic_new_data_fused} where $\gamma$ is generated using scheme (S2), and the NSGL estimator generally achieves the lowest error in Table~\ref{table:synthetic_new_data_group} where $\gamma$ is generated using scheme (S3).
Again, we conclude that the choice of penalty functions is contingent on the specific characteristics of predictors. And our simulations reflect the versatility and generality of the PPDNA.

\begin{table}[!ht]
\centering
\caption{Comparison of the four estimators VML, NL, NFL, and NSGL under $B$ with different ranks $r$ and non-sparisity levels $s${, together with} $\gamma$ generating schemes (S1), based on $100$ replications.
}
{
\setlength{\tabcolsep}{1pt}
\begin{adjustbox}{width=0.99\linewidth}
\begin{tabular}{ccccccccccccccccc}
\hline
 &&  & \multicolumn{4}{c}{RMSE-$y$} & &\multicolumn{4}{c}{Error-$B$}&& \multicolumn{4}{c}{Error-$\gamma$} \\
\cmidrule(r){4-7}  \cmidrule(r){9-12} \cmidrule(r){14-17}
$s$ && $r$   & VML & NL & NFL & NSGL &  & VML & NL & NFL & NSGL&  & VML & NL & NFL & NSGL \\
\hline
0.1 && 1 & 17.799 & \textbf{3.478} & 4.067 & 17.210 && 0.317 & \textbf{0.058} & 0.067 & 0.221 && 0.253 & \textbf{0.052} & 0.066 & 0.417 \\
    &&   & (2.876) & (0.868) & (1.112) & (1.207) && (0.048) & (0.015) & (0.019) & (0.023) && (0.050) & (0.016) & (0.021) & (0.012) \\
 && 2 & 18.510 & \textbf{11.167} & 12.950 & 19.595 && 0.323 & \textbf{0.197} & 0.218 & 0.276 && 0.279 & \textbf{0.162} & 0.215 & 0.437 \\
    &&   & (3.165) & (1.885) & (2.218) & (1.753) && (0.053) & (0.032) & (0.036) & (0.034) && (0.059) & (0.036) & (0.046) & (0.019) \\
 && 4 & 18.838 & \textbf{15.686} & 16.387 & 20.605 && 0.328 & \textbf{0.274} & 0.279 & 0.302 && 0.285 & \textbf{0.234} & 0.267 & 0.440 \\
    &&   & (2.555) & (2.099) & (2.164) & (1.580) && (0.040) & (0.032) & (0.032) & (0.030) && (0.049) & (0.046) & (0.047) & (0.013) \\
    \addlinespace
0.2 && 1 & 24.939 & \textbf{3.555} & 4.787 & 18.813 && 0.450 & \textbf{0.060} & 0.080 & 0.265 && 0.348 & \textbf{0.051} & 0.078 & 0.426 \\
    &&   & (3.307) & (0.899) & (1.443) & (1.336) && (0.055) & (0.016) & (0.024) & (0.024) && (0.050) & (0.015) & (0.026) & (0.012) \\
 && 2 & 27.117 & \textbf{15.472} & 17.316 & 22.921 && 0.483 & \textbf{0.273} & 0.296 & 0.354 && 0.379 & \textbf{0.219} & 0.277 & 0.454 \\
    &&   & (3.524) & (2.097) & (2.295) & (1.642) && (0.064) & (0.037) & (0.039) & (0.034) && (0.057) & (0.041) & (0.047) & (0.014) \\
 && 4 & 27.792 & \textbf{21.971} & 23.150 & 25.708 && 0.496 & \textbf{0.390} & 0.398 & 0.415 && 0.393 & \textbf{0.317} & 0.369 & 0.471 \\
    &&   & (2.956) & (2.478) & (2.413) & (1.859) && (0.049) & (0.039) & (0.038) & (0.035) && (0.046) & (0.054) & (0.049) & (0.014) \\
    \addlinespace
0.4 && 1 & 35.279 & \textbf{3.903} & 5.714 & 20.764 && 0.640 & \textbf{0.066} & 0.096 & 0.307 && 0.457 & \textbf{0.055} & 0.092 & 0.440 \\
    &&   & (2.972) & (1.220) & (2.205) & (1.598) && (0.050) & (0.022) & (0.038) & (0.031) && (0.047) & (0.019) & (0.038) & (0.012) \\
 && 2 & 40.383 & \textbf{22.738} & 22.800 & 26.603 && 0.740 & \textbf{0.395} & 0.394 & 0.440 && 0.488 & \textbf{0.349} & 0.354 & 0.470 \\
    &&   & (4.272) & (2.760) & (2.719) & (1.954) && (0.075) & (0.046) & (0.045) & (0.037) && (0.052) & (0.053) & (0.053) & (0.013) \\
 && 4 & 43.102 & \textbf{30.262} & 31.635 & 31.792 && 0.809 & \textbf{0.546} & 0.559 & 0.558 && 0.483 & \textbf{0.411} & 0.471 & 0.484 \\
    &&   & (4.151) & (2.629) & (2.276) & (2.372) && (0.074) & (0.042) & (0.040) & (0.041) && (0.034) & (0.041) & (0.020) & (0.011) \\
\hline
\end{tabular}
\end{adjustbox}
}
\label{table:synthetic_new_data_sparse}
\end{table}

\begin{table}[!ht]
\centering
\caption{Comparison of the four estimators VML, NL, NFL, and NSGL under $B$ with different ranks $r$ and non-sparisity levels $s${, together with} $\gamma$ generating schemes (S2), based on $100$ replications.
}
{
\setlength{\tabcolsep}{1pt}
\begin{adjustbox}{width=0.99\linewidth}
\begin{tabular}{ccccccccccccccccc}
\hline
 &&  & \multicolumn{4}{c}{RMSE-$y$} & &\multicolumn{4}{c}{Error-$B$}&& \multicolumn{4}{c}{Error-$\gamma$} \\
\cmidrule(r){4-7}  \cmidrule(r){9-12} \cmidrule(r){14-17}
$s$ && $r$   & VML & NL & NFL & NSGL &  & VML & NL & NFL & NSGL&  & VML & NL & NFL & NSGL\\
\hline
0.1 && 1 & 18.489 & 14.018 & \textbf{7.982} & 13.221 && 0.310 & 0.189 & \textbf{0.121} & 0.184 && 0.315 & 0.326 & \textbf{0.160} & 0.299 \\
    &&   & (2.348) & (1.253) & (1.764) & (1.043) && (0.053) & (0.023) & (0.027) & (0.021) && (0.002) & (0.016) & (0.038) & (0.008) \\
 && 2 & 19.281 & 15.860 & \textbf{13.931} & 16.151 && 0.326 & 0.246 & \textbf{0.228} & 0.251 && 0.317 & 0.314 & \textbf{0.248} & 0.314 \\
    &&   & (2.404) & (1.373) & (1.954) & (1.577) && (0.052) & (0.030) & (0.033) & (0.033) && (0.006) & (0.008) & (0.028) & (0.011) \\
 && 4 & 19.335 & 17.490 & \textbf{16.530} & 17.383 && 0.330 & 0.286 & \textbf{0.281} & 0.285 && 0.318 & 0.319 & \textbf{0.274 }& 0.313 \\
    &&   & (1.919) & (1.505) & (1.627) & (1.426) && (0.043) & (0.032) & (0.031) & (0.031) && (0.007) & (0.007) & (0.014) & (0.002) \\
    \addlinespace
0.2 && 1 & 24.530 & 14.715 & \textbf{10.639} & 14.030 && 0.448 & 0.217 & \textbf{0.166} & 0.208 && 0.316 & 0.316 &\textbf{ 0.208} & 0.299 \\
    &&   & (2.551) & (1.004) & (2.056) & (1.095) && (0.053) & (0.020) & (0.034) & (0.021) && (0.000) & (0.001) & (0.038) & (0.005) \\
 && 2 & 26.568 & 18.947 & \textbf{17.431} & 18.446 && 0.488 & 0.316 & \textbf{0.301} & 0.312 && 0.316 & 0.328 & \textbf{0.277} & 0.309 \\
    &&   & (2.910) & (1.561) & (1.666) & (1.485) && (0.059) & (0.030) & (0.030) & (0.028) && (0.001) & (0.011) & (0.017) & (0.004) \\
 && 4 & 26.840 & 21.835 &\textbf{ 21.761} & 21.828 && 0.493 &\textbf{ 0.385} & \textbf{0.385} & \textbf{0.385} && 0.317 & 0.316 & \textbf{0.310} & 0.316 \\
    &&   & (2.557) & (1.751) & (1.843) & (1.752) && (0.055) & (0.035) & (0.036) & (0.035) && (0.003) & (0.000) & (0.020) & (0.001) \\
    \addlinespace
0.4 && 1 & 33.062 & 15.526 & \textbf{11.578} & 15.175 && 0.628 & 0.238 & \textbf{0.184} & 0.234 && 0.319 & 0.314 &\textbf{ 0.218} & 0.306 \\
    &&   & (2.592) & (1.165) & (2.086) & (1.083) && (0.044) & (0.024) & (0.035) & (0.022) && (0.004) & (0.007) & (0.035) & (0.004) \\
 && 2 & 38.925 & 21.139 & \textbf{20.426} & 21.380 && 0.750 & 0.373 & \textbf{0.364} & 0.377 && 0.326 & 0.316 & \textbf{0.297} & 0.323 \\
    &&   & (3.496) & (1.640) & (1.903) & (1.820) && (0.065) & (0.031) & (0.034) & (0.034) && (0.013) & (0.001) & (0.019) & (0.010) \\
 && 4 & 41.099 & 27.812 & \textbf{27.806} & \textbf{27.806} && 0.792 & \textbf{0.517} & \textbf{0.517} & \textbf{0.517} && \textbf{0.316} & \textbf{0.316} & \textbf{0.316} & \textbf{0.316} \\
    &&   & (4.182) & (2.018) & (2.013) & (2.025) && (0.081) & (0.039) & (0.039) & (0.039) && (0.000) & (0.000) & (0.001) & (0.001) \\
\hline
\end{tabular}
\end{adjustbox}
}
\label{table:synthetic_new_data_fused}
\end{table}
\begin{table}[!ht]
\centering
\caption{Comparison of the four estimators VML, NL, NFL, and NSGL under $B$ with different ranks $r$ and non-sparisity levels $s${, together with} $\gamma$ generating schemes  (S3), based on $100$ replications.
}
{
\setlength{\tabcolsep}{1pt}
\begin{adjustbox}{width=0.99\linewidth}
\begin{tabular}{ccccccccccccccccc}
\hline
 &  && \multicolumn{4}{c}{RMSE-$y$} & &\multicolumn{4}{c}{Error-$B$}&& \multicolumn{4}{c}{Error-$\gamma$} \\
\cmidrule(r){4-7}  \cmidrule(r){9-12} \cmidrule(r){14-17}
$s$ && $r$   & VML & NL & NFL & NSGL &  & VML & NL & NFL & NSGL&  & VML & NL & NFL & NSGL\\
\hline
0.1 && 1 & 18.195 & 13.311 & 13.618 & \textbf{12.499} && 0.304 & 0.182 & 0.186 & \textbf{0.174} && 0.316 & 0.306 & 0.315 & \textbf{0.281} \\
    &&   & (2.120) & (0.762) & (0.788) & (0.842) && (0.048) & (0.018) & (0.018) & (0.019) && (0.001) & (0.005) & (0.005) & (0.008) \\
 && 2 & 18.664 & 15.556 & 15.659 &\textbf{ 15.099} && 0.316 & 0.241 & 0.242 & \textbf{0.237} && 0.316 & 0.311 & 0.316 & \textbf{0.297} \\
   & &   & (2.017) & (1.187) & (1.182) & (1.271) && (0.042) & (0.024) & (0.024) & (0.025) && (0.001) & (0.005) & (0.000) & (0.009) \\
 & &4 & 19.700 & 17.838 & 17.790 & \textbf{17.516} && 0.335 & 0.291 & 0.290 & \textbf{0.289} && 0.317 & 0.315 & 0.316 & \textbf{0.306} \\
    &&   & (1.908) & (1.402) & (1.388) & (1.460) && (0.040) & (0.029) & (0.029) & (0.029) && (0.004) & (0.004) & (0.000) & (0.008) \\
        \addlinespace
0.2 && 1 & 24.181 & 14.593 & 14.638 & \textbf{13.747} && 0.440 & 0.215 & 0.213 & \textbf{0.204} && 0.318 & 0.310 & 0.315 & \textbf{0.291} \\
    &&   & (2.558) & (0.914) & (0.855) & (0.893) && (0.056) & (0.020) & (0.020) & (0.019) && (0.005) & (0.007) & (0.003) & (0.006) \\
 && 2 & 25.944 & 18.245 & 18.276 & \textbf{18.073} && 0.477 & 0.306 & 0.306 & \textbf{0.305} && 0.327 & 0.315 & 0.316 & \textbf{0.308} \\
    &&   & (2.899) & (1.420) & (1.430) & (1.464) && (0.059) & (0.030) & (0.030) & (0.030) && (0.011) & (0.003) & (0.001) & (0.005) \\
 && 4 & 27.183 & \textbf{22.197} & 22.205 & 22.559 && 0.507 & \textbf{0.396} & 0.397 & 0.401 && \textbf{0.316} & \textbf{0.316} & \textbf{0.316} & 0.332 \\
    &&   & (2.585) & (1.790) & (1.794) & (1.961) && (0.050) & (0.032) & (0.032) & (0.034) && (0.000) & (0.001) & (0.000) & (0.013) \\
        \addlinespace
0.4 && 1 & 33.195 & 15.414 & 15.583 & \textbf{14.717} && 0.631 & 0.237 & 0.239 & \textbf{0.227} && 0.319 & 0.312 & 0.316 & \textbf{0.293} \\
    &&   & (2.948) & (1.099) & (1.127) & (1.136) && (0.056) & (0.022) & (0.023) & (0.022) && (0.004) & (0.003) & (0.000) & (0.007) \\
 && 2 & 38.469 & 21.212 & 21.239 & \textbf{21.163} && 0.743 & 0.374 & 0.374 & \textbf{0.373} && 0.332 & 0.316 & 0.317 &\textbf{ 0.315} \\
    &&   & (3.454) & (1.843) & (1.841) & (1.835) && (0.062) & (0.035) & (0.035) & (0.035) && (0.015) & (0.000) & (0.002) & (0.002) \\
 && 4 & 42.049 & \textbf{28.281} & \textbf{28.281} & 28.313 && 0.817 & \textbf{0.528} & \textbf{0.528} & 0.529 && \textbf{0.316 }& \textbf{0.316} & \textbf{0.316} & \textbf{0.316} \\
    &&   & (3.872) & (2.282) & (2.282) & (2.288) && (0.073) & (0.039) & (0.039) & (0.039) && (0.000) & (0.000) & (0.000) & (0.001) \\
\hline
\end{tabular}
\end{adjustbox}
}
\label{table:synthetic_new_data_group}
\end{table}

\subsection{Comparison of Algorithm Efficiency}\label{sec:efficiency}
This subsection aims to reveal the relative performance and applicability of the PPDNA for solving \eqref{eq: p}, highlighting its potential efficiency over established methods such as {the ADMM and the Nesterov} algorithm. For illustrative purpose, we choose 
the penalty function to be $\phi(B) + \psi(\gamma) = \rho {\|B\|_*} + \lambda \|\gamma\|_1$, resulting  NL estimators. Other forms of $\psi$ yield quite similar results in terms of {the} performance of algorithm efficiency.

We compare the efficiency of our PPDNA with two state-of-the-art algorithms: the {Nesterov} algorithm implemented by \citet{zhou2014regularized} and the alternating direction method of multipliers (ADMM), of which the details are given in Appendix~\ref{sec:admm}.  It is worth noting that the {Nesterov} algorithm presented by \citet{zhou2014regularized} was primarily designed for estimating the low rank matrix coefficient $B$ without regularization on the vector coefficient $\gamma$ ($\lambda = 0$). In our experiments, we implemented the {Nesterov} algorithm  \citep[Algorithm~1]{zhou2014regularized}  from  \url{https://hua-zhou.github.io/SparseReg} and \url{https://hua-zhou.github.io/TensorReg}. We implemented the ADMM {by} ourselves, see Appendix~\ref{sec:admm} for more details. We note that there is an ADMM type method \citep[Algorithm~1]{Li2020linearized} for obtaining the NFL estimator, while their codes are not available.

For each instance, we run the PPDNA to a high accuracy, terminating {after a maximum of 50 iterations}
or when $\eta_{\rm KKT} < 10^{-10}$ (see \eqref{kkt:stop6}), and use the resulting solution as a benchmark. We denote the benchmark objective function value of \eqref{eq: p} as ${\rm obj}^*$. For the objective function value of one method, denoted as ${\rm obj}$, we compute the relative objective function value
\begin{equation}\label{eq:Robj}
\begin{array}{l}
    R_{\rm obj} := ({{\rm obj} - {\rm obj}^*})/({1 + {|{\rm obj}^*|}}).
\end{array}
\end{equation}
We initialize each algorithm with zero and set a maximum computational time of one hour. For the PPDNA, we terminate execution when either $R_{\rm obj}<10^{-10}$ or the number of iterations reaches $50$. While for the ADMM and {the Nesterov} algorithm, termination occurs when either $R_{\rm obj}<10^{-10}$ or the number of iterations reaches $500,000$.

As mentioned before, the {Nesterov} algorithm \citep[Algorithm~1]{zhou2014regularized} does not support regularization on the vector coefficient $\gamma$ ({that is, it is only capable in the case when} $\lambda = 0$). In order to include {the} {Nesterov} algorithm \citep[Algorithm~1]{zhou2014regularized} into comparison, we {first} use the regularization setting:
\begin{align}\label{eq:set1}
    \rho=\alpha\|{\rm mat}(X^{\intercal} y)\|_{2} \mbox{ with } 0<\alpha<1,\; \lambda = 0,
\end{align}
and compare the three algorithms PPDNA, ADMM, and Nesterov algorithm under this setting. Additionally, we {also} compare PPDNA and ADMM under the
setting as in \eqref{eq: regset}:
\begin{align}\label{eq:set2}
   \rho=\alpha_1\|{\rm mat}(X^{\intercal} y)\|_{2}, \quad \lambda=\alpha_2\|Z^{\intercal} y\|_{\infty},\mbox{ with } \alpha_1,\alpha_2\in (0,1).
\end{align}

\begin{figure}[!ht]
    \centering
     \includegraphics[width=0.24\linewidth]{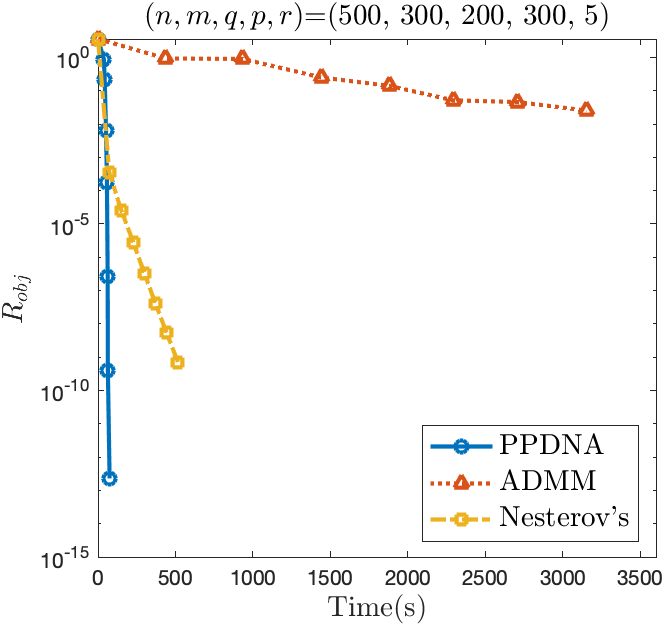}
        \includegraphics[width=0.24\linewidth]{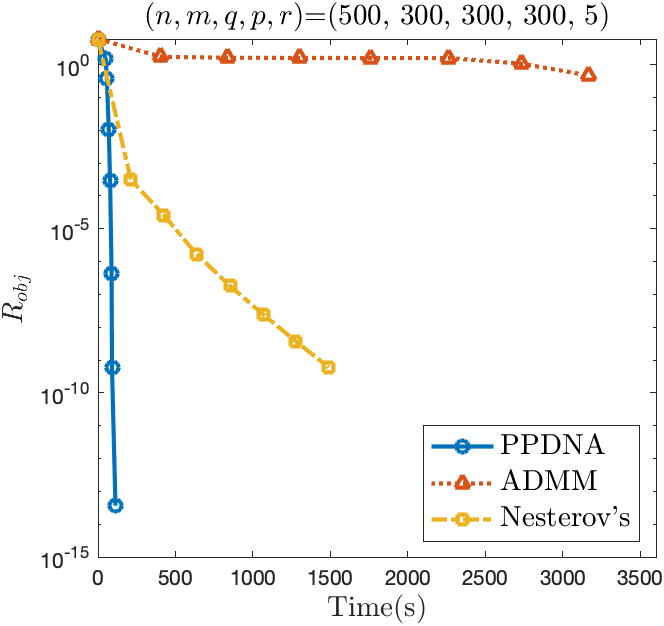}
        \includegraphics[width=0.24\linewidth]{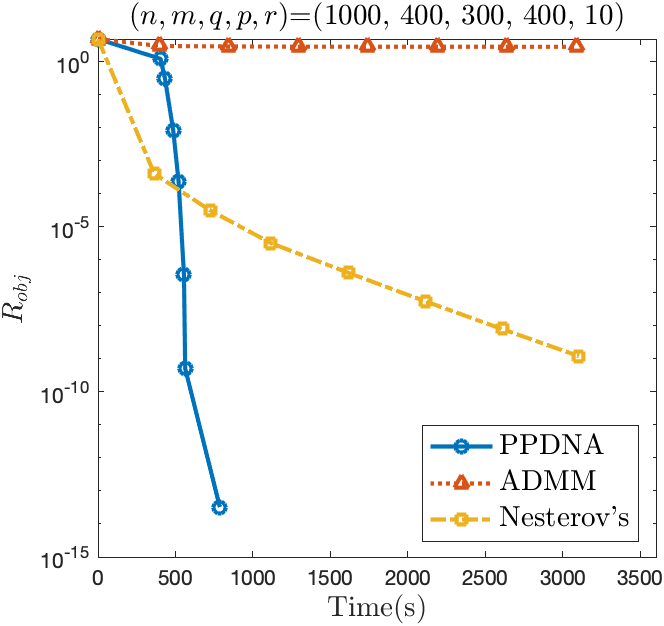}
        \includegraphics[width=0.24\linewidth]{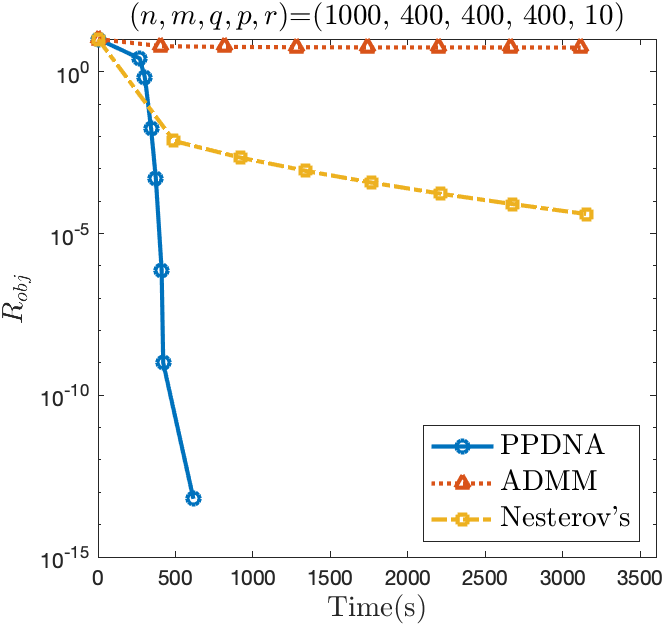}
    \caption{Time vs. $R_{\rm obj}$ (see \eqref{eq:Robj}) of PPDNA, ADMM, and Nesterov algorithm on synthetic data. The penalty parameters are taken from Table~\ref{table:new_format_update} with {asterisks} (*).
    }\label{fig:efficiency_synthetic_noVec}
\end{figure}

\subsubsection{Synthetic Data}
In this experiment, we adopt the same data generation strategy as in Section~\ref{sec:model_syntheticdata}, with some minor adjustments. We set $s=0.1$ and randomly generate a true $\gamma$ with a non-sparsity level of 0.01, where all non-zero entries are ones.
We conduct experiments on synthetic data across various problem dimensions and sample sizes. Additionally, we explore different values of $(\rho,\lambda)$, including pairs capable of recovering the true rank of $B$ and the true sparsity level of $\gamma$. The results are shown in Tables~\ref{table:new_format_update} and \ref{table:synthetic_efficiency_vec}. Additionally, we plot
Figures~\ref{fig:efficiency_synthetic_noVec} and \ref{fig:efficiency_synthetic_vec}, corresponding to the {parameters} with an asterisk from Tables~\ref{table:new_format_update} and \ref{table:synthetic_efficiency_vec}.

\begin{table}[!h]
  \centering
  \caption{Time comparison of PPDNA, ADMM, and Nesterov algorithm with penalty parameters in \eqref{eq:set1} on synthetic data. Here $\hat{r}$ represents the rank of the estimated matrix coefficient $B^*$ by PPDNA, and $R_{\rm obj}$ is given by  \eqref{eq:Robj}.  ``P'' stands for PPDNA; ``A'' for ADMM; ``N'' for Nesterov algorithm. The penalty parameters marked with asterisks (*) are used for plotting Figure~\ref{fig:efficiency_synthetic_noVec}, as they have been found to recover the true rank of $B$.
  }
{
\setlength{\tabcolsep}{3pt}
\begin{adjustbox}{width=0.99\linewidth}
  \begin{tabular}{clcccccccccc}
\hline
& && \multicolumn{3}{c}{Iterations} & \multicolumn{3}{c}{$R_{\rm obj}$} & \multicolumn{3}{c}{Time} \\
\cmidrule(r){4-6}\cmidrule(r){7-9}\cmidrule(r){10-12}
Data &$\rho$&$\hat{r}$ & P & A & N & P & A& N & P & A&N \\
\midrule
& 1.04e+1 & 6 & 13 & 925 & 43797 & 3.01e-16 & 9.89e-11 & 4.76e-02 & 0:01:02 & 0:02:32 & 1:00:00 \\
$n=500$& 1.04e+2 & 6 & 10 & 16199 & 38902 & 3.23e-11 & 9.02e-11 & 3.71e-03 & 0:00:42 & 0:34:39 & 1:00:00 \\
$m=300$& 1.04e+3 & 6 & 7 & 19362 & 42416 & 4.36e-12 & 3.94e-05 & 7.71e-08 & 0:00:49 & 1:00:00 & 1:00:00 \\
$q=200$& 6.91e+3 & 6 & 7 & 17783 & 11269 & 3.35e-14 & 2.82e-01 & 9.99e-11 & 0:01:28 & 1:00:00 & 0:13:27 \\
$p=300$& 1.04e+4* & 5 & 7 & 17354 & 7084 & 2.30e-13 & 1.48e-01 & 9.98e-11 & 0:01:15 & 1:00:00 & 0:09:47 \\
$r=5$& 1.38e+4 & 4 & 7 & 11228 & 4739 & 8.92e-14 & 7.71e-02 & 1.00e-11 & 0:01:56 & 1:00:00 & 0:07:29 \\
& 2.07e+4 & 4 & 7 & 14803 & 3031 & 1.33e-14 & 4.65e-02 & 9.99e-11 & 0:02:00 & 1:00:00 & 0:04:46 \\
\midrule
& 1.03e+1 & 6 & 12 & 1059 & 30816 & 5.97e-11 & 9.94e-11 & 9.98e-02 & 0:01:15 & 0:04:13 & 1:00:00 \\
$n=500$& 1.03e+2 & 6 & 10 & 18093 & 29772 & 2.85e-11 & 1.43e-08 & 1.79e-02 & 0:01:26 & 1:00:00 & 1:00:00 \\
$m=300$& 1.03e+3 & 6 & 7 & 12541 & 34291 & 1.93e-11 & 1.19e+00 & 8.16e-05 & 0:01:04 & 1:00:00 & 1:00:00 \\
$n=300$& 5.17e+3 & 6 & 7 & 11643 & 38344 & 6.42e-14 & 2.62e-01 & 3.40e-10 & 0:02:09 & 1:00:00 & 1:00:00 \\
$p=300$& 1.03e+4* & 5 & 7 & 11784 & 16496 & 3.65e-14 & 2.24e-01 & 1.00e-10 & 0:01:53 & 1:00:00 & 0:28:22 \\
$r=5$& 2.07e+4 & 5 & 7 & 10851 & 7818 & 4.38e-13 & 6.72e-01 & 9.99e-11 & 0:01:53 & 1:00:00 & 0:12:20 \\
& 3.10e+4 & 3 & 7 & 10700 & 2494 & 3.87e-13 & 1.44e-01 & 9.96e-11 & 0:01:46 & 1:00:00 & 0:04:00 \\
\midrule
& 1.51e+1 & 11 & 12 & 1019 & 15443 & 7.76e-13 & 9.76e-11 & 1.42e-01 & 0:02:08 & 0:08:24 & 1:00:00 \\
$n=1000$& 1.51e+2 & 11 & 8 & 5010 & 13846 & 2.54e-11 & 9.95e-11 & 3.20e-02 & 0:02:19 & 0:59:10 & 1:00:00 \\
$m=400$& 1.51e+3 & 11 & 7 & 4137 & 12688 & 1.14e-12 & 7.21e-01 & 8.13e-04 & 0:04:42 & 1:00:00 & 1:00:00 \\
$q=300$& 1.88e+4 & 10 & 7 & 4098 & 10236 & 2.54e-14 & 4.13e-01 & 1.00e-11 & 0:08:01 & 1:00:00 & 0:48:14 \\
$p=400$& 1.51e+4* & 10 & 7 & 2854 & 12481 & 3.26e-14 & 2.86e+00 & 1.87e-10 & 0:13:07 & 1:00:00 & 1:00:00 \\
$r=10$& 2.45e+4 & 10 & 7 & 3973 & 7698 & 1.89e-14 & 1.50e+00 & 9.98e-11 & 0:06:59 & 1:00:00 & 0:31:38 \\
& 3.20e+4 & 9 & 7 & 4044 & 5536 & 1.43e-14 & 9.93e-01 & 9.99e-11 & 0:07:42 & 1:00:00 & 0:21:39 \\
\midrule
& 1.00e+1 & 11 & 13 & 812 & 8724 & 1.18e-12 & 9.68e-11 & 3.10e-01 & 0:05:35 & 0:13:51 & 1:00:00 \\
$n=1000$& 1.00e+2 & 11 & 9 & 4404 & 12216 & 8.81e-11 & 1.42e-07 & 7.98e-02 & 0:03:24 & 1:00:00 & 1:00:00 \\
$m=400$& 1.00e+3 & 11 & 7 & 3411 & 12134 & 2.09e-12 & 1.51e+00 & 9.75e-03 & 0:04:44 & 1:00:00 & 1:00:00 \\
$q=400$& 1.00e+4* & 10 & 7 & 3240 & 11594 & 6.29e-14 & 5.67e+00 & 2.02e-05 & 0:10:18 & 1:00:00 & 1:00:00 \\
$p=400$& 1.60e+4 & 9 & 7 & 2276 & 12704 & 4.03e-14 & 3.30e+00 & 3.68e-07 & 0:13:32 & 1:00:00 & 1:00:00 \\
$r=10$& 2.40e+4 & 9 & 7 & 3246 & 12724 & 2.64e-14 & 1.96e+00 & 7.82e-09 & 0:10:22 & 1:00:00 & 1:00:00 \\
& 3.00e+4 & 9 & 7 & 3189 & 12367 & 2.08e-14 & 1.43e+00 & 6.32e-10 & 0:10:38 & 1:00:00 & 1:00:00 \\
\hline
\end{tabular}
\end{adjustbox}
}
\label{table:new_format_update}
\end{table}

It is clear from Figures~\ref{fig:efficiency_synthetic_noVec} and \ref{fig:efficiency_synthetic_vec} that the convergence speed of the PPDNA significantly surpass that of both Nesterov algorithm and ADMM. Figure~\ref{fig:efficiency_synthetic_noVec} reveals that Nesterov algorithm initially descends rapidly. This rapid descent, however, gradually tapers off, which still remains faster than ADMM. Tables~\ref{table:new_format_update} and \ref{table:synthetic_efficiency_vec} confirm that PPDNA consistently outperforms ADMM and Nesterov algorithm, achieving rapid convergence to the desired accuracy within several minutes for both regularization settings. ADMM and Nesterov algorithm, in contrast, consistently take much longer time to converge, and sometimes failed to converge to the desired accuracy within an hour. Note that Nesterov algorithm is more efficient when the penalty parameters are larger, indicating {its} dependence on the regularization {parameters} for {faster} convergence. Overall, the PPDNA demonstrates superior efficiency and robustness in various settings.

\begin{table}[!ht]
\centering
\caption{Time comparison of PPDNA and ADMM with penalty parameters in \eqref{eq:set2} on synthetic data.
Here $\hat{r}$ represents the rank of the estimated matrix coefficient $B^*$ by PPDNA, and ns($\gamma^*$) represents the non-sparsity level of the estimated vector coefficient $\gamma^*$ by PPDNA. The penalty parameters marked with asterisks (*) are used for plotting Figure~\ref{fig:efficiency_synthetic_vec}, as they have been found to recover the true rank of $B$ and the true sparsity level of $\gamma$.
}
{
\setlength{\tabcolsep}{3pt}
\begin{adjustbox}{width=0.99\linewidth}
\begin{tabular}{ccccccccccc}
\hline
& & & & \multicolumn{2}{c}{Iterations} & \multicolumn{2}{c}{$R_{\rm obj}$} & \multicolumn{2}{c}{Time} \\
\cmidrule(r){5-6}\cmidrule(r){7-8}\cmidrule(r){9-10}
Data &$(\rho,\lambda)$ & $\hat{r}$ & ns($\gamma^*$) & P&  A & P & A & P & A \\
\midrule
& (3.46e+1, 2.66e+0) & 11 & 1.00e-2 & 8 & 296 & 5.53e-11 & 9.57e-11 & 0:00:12 & 0:00:52 \\
$n=500$& (3.46e+2, 2.66e+1) & 11 & 1.00e-2 & 7 & 1042 & 5.75e-12 & 9.76e-11 & 0:00:21 & 0:03:50 \\
$m=300$& (3.46e+3, 2.66e+2) & 11 & 1.00e-2 & 7 & 2724 & 4.10e-14 & 9.96e-11 & 0:00:43 & 0:08:45 \\
$q=200$& (2.07e+4, 1.51e+3) & 7 & 1.33e-2 & 6 & 3314 & 5.63e-11 & 9.76e-11 & 0:00:26 & 0:11:58 \\
$p=300$& (2.42e+4, 1.51e+3) & 6 & 5.00e-2 & 6 & 3341 & 4.06e-11 & 9.49e-11 & 0:00:35 & 0:12:04 \\
$r=5$& (2.77e+4, 2.41e+3) & 6 & 3.33e-3 & 6 & 3365 & 2.84e-11 & 9.92e-11 & 0:00:35 & 0:19:30 \\
& (3.11e+4, 2.41e+3) & 6 & 6.67e-3 & 6 & 3404 & 1.97e-11 & 9.19e-11 & 0:00:35 & 0:19:21 \\
& (3.46e+4, 2.66e+3)* & 5 & 1.00e-2 & 6 & 3424 & 1.31e-11 & 9.69e-11 & 0:00:27 & 0:13:04 \\
\midrule
& (5.17e+1, 3.69e+0) & 11 & 2.00e-2 & 8 & 366 & 3.36e-11 & 9.50e-11 & 0:00:20 & 0:01:27 \\
$n=500$& (5.17e+2, 3.69e+1) & 11 & 2.00e-2 & 7 & 1234 & 3.13e-12 & 9.97e-11 & 0:00:31 & 0:06:29 \\
$m=300$& (5.17e+3, 3.69e+2) & 11 & 2.00e-2 & 7 & 2903 & 4.17e-14 & 9.74e-11 & 0:01:02 & 0:14:16 \\
$q=300$& (3.82e+4, 3.10e+3) & 7 & 3.33e-3 & 6 & 3506 & 3.61e-11 & 9.08e-11 & 0:00:44 & 0:19:59 \\
$p=300$& (4.13e+4, 3.40e+3) & 7 & 3.33e-3 & 6 & 3547 & 2.95e-11 & 9.57e-11 & 0:00:45 & 0:20:23 \\
$r=5$& (4.44e+4, 3.71e+3) & 6 & 3.33e-3 & 6 & 3578 & 2.38e-11 & 9.53e-11 & 0:00:52 & 0:25:27 \\
& (4.65e+4, 3.71e+3) & 6 & 3.33e-3 & 6 & 3596 & 2.06e-11 & 9.82e-11 & 0:00:50 & 0:20:41 \\
& (5.17e+4, 3.69e+3)* & 5 & 1.00e-2 & 6 & 3647 & 1.40e-11 & 9.58e-11 & 0:00:38 & 0:20:28 \\
\midrule
& (6.59e+1, 4.98e+0) & 15 & 1.50e-2 & 8 & 349 & 1.39e-11 & 8.91e-11 & 0:00:58 & 0:03:42 \\
$n=1000$& (6.59e+2, 4.98e+1) & 15 & 1.50e-2 & 7 & 1265 & 2.10e-12 & 9.73e-11 & 0:01:13 & 0:13:59 \\
$m=400$& (6.59e+3, 4.98e+2) & 15 & 1.50e-2 & 7 & 2925 & 5.88e-14 & 9.73e-11 & 0:04:51 & 0:41:28 \\
$q=300$& (4.70e+4, 3.32e+3) & 12 & 2.00e-2 & 6 & 3497 & 7.51e-11 & 9.90e-11 & 0:04:22 & 0:54:47 \\
$p=400$& (5.65e+4, 3.32e+3) & 11 & 3.00e-2 & 6 & 2538 & 5.22e-11 & 5.35e-02 & 0:06:00 & 1:00:00 \\
$r=10$& (6.59e+4, 3.87e+3) & 10 & 3.25e-2 & 6 & 3605 & 3.65e-11 & 8.98e-11 & 0:04:12 & 0:56:43 \\
& (6.59e+4, 4.98e+3)* & 10 & 1.00e-2 & 6 & 3607 & 3.63e-11 & 9.43e-11 & 0:04:23 & 0:58:24 \\
& (7.53e+4, 4.42e+3) & 9 & 3.25e-2 & 6 & 3682 & 2.52e-11 & 9.79e-11 & 0:04:05 & 0:59:57 \\
\midrule
& (9.00e+1, 6.27e+0) & 17 & 1.25e-2 & 8 & 425 & 8.40e-12 & 9.79e-11 & 0:01:52 & 0:09:43 \\
$n=1000$& (9.00e+2, 6.27e+1) & 17 & 1.25e-2 & 7 & 1483 & 1.26e-12 & 9.90e-11 & 0:02:26 & 0:27:24 \\
$m=400$& (9.00e+3, 6.27e+2) & 15 & 1.25e-2 & 7 & 3066 & 5.49e-14 & 9.82e-11 & 0:06:05 & 0:58:29 \\
$q=400$& (7.00e+4, 4.67e+3) & 12 & 1.50e-2 & 6 & 2845 & 5.38e-11 & 1.87e-02 & 0:05:33 & 1:00:00 \\
$p=400$& (9.00e+4, 6.01e+3) & 10 & 1.50e-2 & 6 & 2019 & 2.90e-11 & 2.76e-01 & 0:06:58 & 1:00:00 \\
$r=10$& (9.00e+4, 6.27e+3)* & 10 & 1.00e-2 & 6 & 2517 & 2.90e-11 & 2.75e-01 & 0:04:51 & 1:00:00 \\
& (1.00e+5, 5.34e+3) & 9 & 4.75e-2 & 6 & 2425 & 2.12e-11 & 2.01e-01 & 0:07:53 & 1:00:00 \\
& (1.20e+5, 5.34e+3) & 7 & 9.25e-2 & 6 & 2845 & 1.08e-11 & 8.81e-02 & 0:05:04 & 1:00:00 \\
\hline
\end{tabular}
\end{adjustbox}
}
\label{table:synthetic_efficiency_vec}
\end{table}

\begin{figure}[!ht]
    \centering
\includegraphics[width=0.24\linewidth]{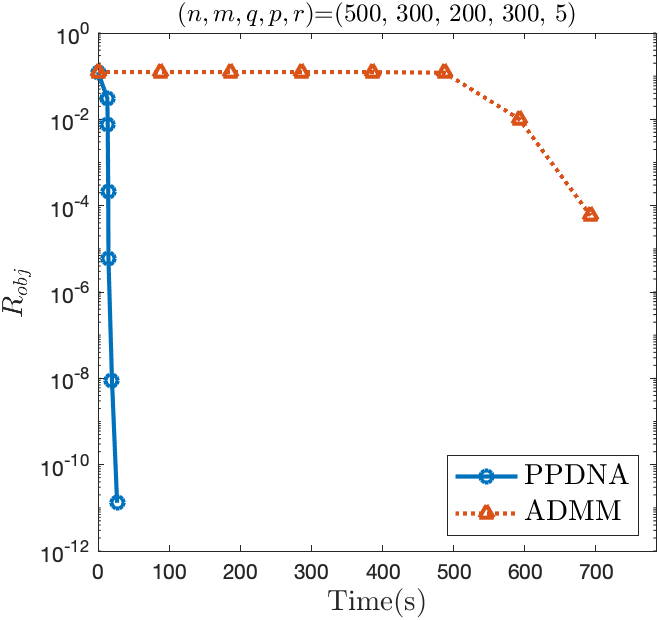}
        \includegraphics[width=0.24\linewidth]{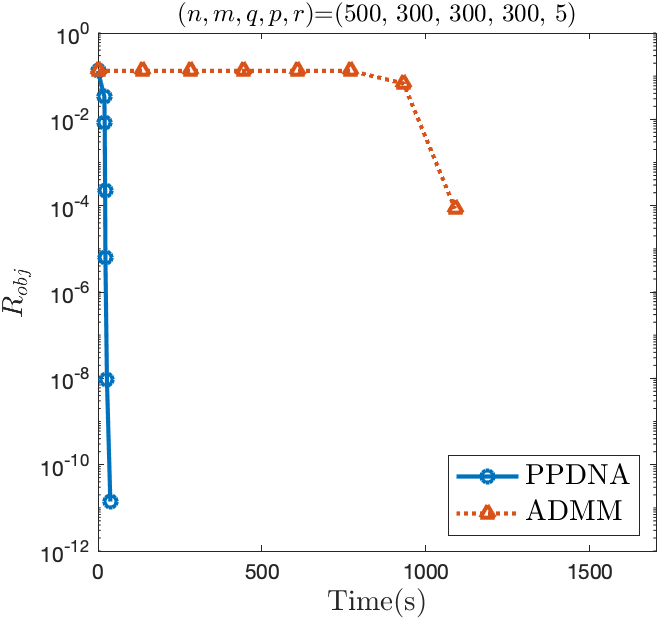}
        \includegraphics[width=0.24\linewidth]{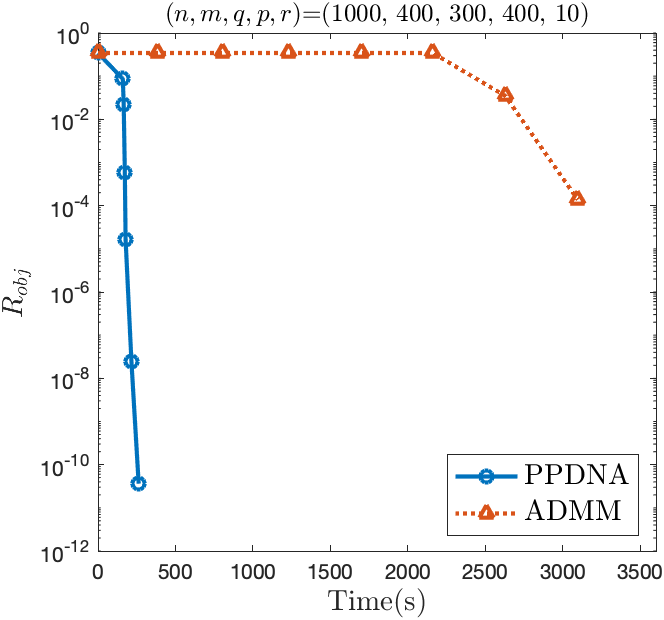}
        \includegraphics[width=0.24\linewidth]{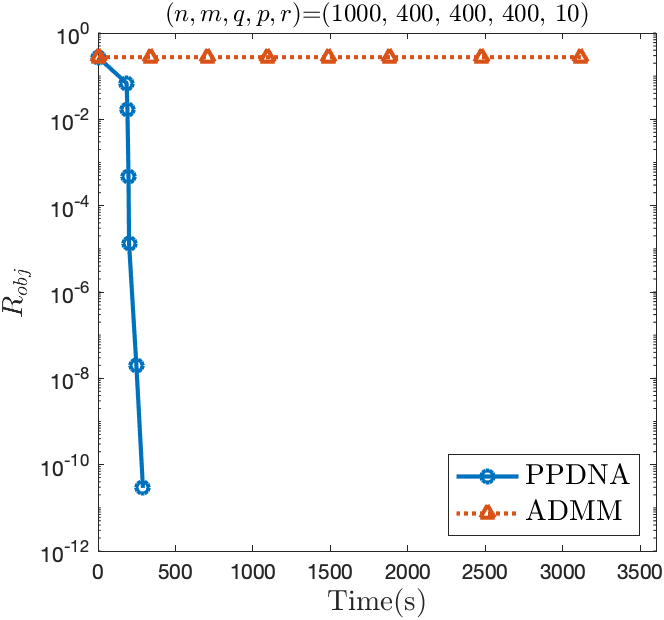}
    \caption{Time vs. $R_{\rm obj}$ (see \eqref{eq:Robj})  of PPDNA and ADMM on synthetic data. The penalty parameters are taken from Table~\ref{table:synthetic_efficiency_vec} with {asterisks} (*).
    }
    \label{fig:efficiency_synthetic_vec}
\end{figure}

\subsubsection{COVID-19 Data}
In this subsection, we conduct {numerical experiments} on the COVID-19 data \citep{COVID}, which can be downloaded from \url{https://health.google.com/covid-19/open-data/}. This data offers a comprehensive daily time-series of COVID-19 cases, deaths, recoveries, tests, vaccinations, and hospitalizations across more than 230 countries, 760 regions, and 12,000 lower-level administrative divisions. In our analysis, we concentrate on 181 regions over a period of 911 days from January 1, 2020, to June 30, 2022.

The response variable $y$ represents the total count of confirmed cases within each region during this period. Each matrix-valued data $X_i$, for all $i\in [181]$ encompasses various COVID-19 related information, each of which fluctuates over the 911-day period, including:

\begin{enumerate}[itemsep=1pt, topsep=2pt, partopsep=0pt, parsep=0pt]
\item Government response implemented daily, which contains 20 factors such as school closures, gathering restrictions, stay-at-home mandates, and income support initiatives.
\item Mobility, which contains 6 factors reflecting different aspects of people's movement patterns, such as percentage {changes} in visits to restaurants, cafes, shopping centers, theme parks, museums, libraries, and movie theaters compared to {the} baseline.
\item Weather, which contains 7 weather related factors including recorded hourly average, minimum and maximum temperature, rainfall and snowfall during the entire day.
\item Vaccination trends, which contains 30 factors related to deployment and administration of COVID-19 vaccines, such as {the} count of new persons which have received one or more doses.
\end{enumerate}

Therefore, each $X_i$ is of size $911\times63$, resulting from 911-day framework and a total of 63 measurements derived from the aforementioned information. Additionally, the vector-valued sample {$z_i \in\mathbb{R}^{13}$} encapsulates 13 health-related characteristics for each region that remain relatively constant or exhibit minimal variation during the 911-day period. These characteristics include factors such as life expectancy, hospital bed capacity, prevalence of diabetes and smoking, and the number of nurses. During the data pre-processing, we replaced the missing values by 0. In addition, we standardized both the matrices $X_i$'s and the  matrix $Z$ column-wisely. The response variable $y$ is also standardized. {For clarity, we specify the dimensions as follows: $m=911$, $q=63$, $p=13$, and $n=181$.}

Table~\ref{table:revised_COVID_efficiency} compares PPDNA and ADMM on COVID-19 data across a wide range of $\alpha_1$ and $\alpha_2$ values in \eqref{eq:set2}, resulting in various values of $\rho$ and $\lambda$. Additionally, Table~\ref{table:updated_synthetic_efficiency_comparison} includes Nesterov algorithm alongside PPDNA and ADMM in the comparison, with a broad selection of $\alpha$ values in \eqref{eq:set1}, highlighting the performance without regularization on the vector coefficient.

We can see from Table~\ref{table:revised_COVID_efficiency} that PPDNA consistently outperforms ADMM in terms of computational time. PPDNA typically achieves the desired accuracy in less than {one} minute, whereas ADMM often takes several minutes to converge.
Additionally, in Table~\ref{table:updated_synthetic_efficiency_comparison},  Nesterov algorithm exhibits even slower convergence and occasionally fails to descend,  terminating prematurely.

By combining the findings presented in Section \ref{sec:efficiency}, we can safely conclude that PPDNA emerges as a significantly more efficient and robust algorithm  compared to ADMM and Nesterov algorithm, for estimating both matrix and vector variables in problem \eqref{eq: p}.

\begin{table}[H]
  \centering
  \caption{Time comparison of  PPDNA and ADMM with penalty parameters in \eqref{eq:set2} on COVID-19 data. Here $\hat{r}$ represents the rank of the estimated matrix coefficient $B^*$ by PPDNA,  ns($\gamma^*$) represents the non-sparsity level of the estimated vector coefficient $\gamma^*$ by PPDNA, and $R_{\rm obj}$ is given by  \eqref{eq:Robj}.
    }
    \begin{tabular}{cccccccccc}
    \hline
   & & & &\multicolumn{2}{c}{Iterations} & \multicolumn{2}{c}{$R_{\rm obj}$} & \multicolumn{2}{c}{Time}  \\
    \cmidrule(r){5-6}     \cmidrule(r){7-8}     \cmidrule(r){9-10}
     $\alpha_1$&$\alpha_2$ & $\hat{r}$ & ns($\gamma^*$) & P & A & P & A & P & A \\
     \midrule
     \multirow{3}{*}{1e-1} & 1e-1 & 3 & 7.69e-2 & 10 & 2747 & 1.74e-11 & 9.80e-11 & 0:00:16 & 0:07:51 \\
     & 5e-2 & 3 & 4.62e-1 & 9 & 2715 & 4.89e-11 & 9.73e-11 & 0:00:17 & 0:07:44 \\
     & 1e-2 & 3 & 9.23e-1 & 9 & 2779 & 9.91e-11 & 9.84e-11 & 0:00:18 & 0:07:50 \\
     \midrule
     \multirow{3}{*}{1e-2} & 1e-2 & 4 & 1.54e-1 & 11 & 1016 & 9.69e-11 & 9.73e-11 & 0:00:15 & 0:02:34 \\
     & 5e-3 & 4 & 6.15e-1 & 12 & 962 & 3.11e-12 & 9.91e-11 & 0:00:16 & 0:02:27 \\
     & 1e-3 & 4 & 8.46e-1 & 13 & 1227 & 8.80e-12 & 9.89e-11 & 0:00:19 & 0:03:08 \\
     \midrule
     \multirow{3}{*}{1e-3} & 1e-3 & 5 & 1.54e-1 & 11 & 311 & 2.14e-12 & 9.14e-11 & 0:00:16 & 0:00:47 \\
     & 5e-4 & 4 & 6.92e-1 & 13 & 544 & 3.82e-12 & 9.93e-11 & 0:00:24 & 0:01:29 \\
     & 1e-4 & 4 & 8.46e-1 & 17 & 1615 & 2.36e-11 & 9.92e-11 & 0:00:19 & 0:03:01 \\
     \midrule
     \multirow{3}{*}{1e-4} & 1e-4 & 5 & 1.54e-1 & 19 & 447 & 8.51e-12 & 9.87e-11 & 0:00:11 & 0:00:45 \\
     & 5e-5 & 4 & 6.92e-1 & 36 & 2111 & 8.55e-11 & 9.98e-11 & 0:00:17 & 0:03:41 \\
     & 1e-5 & 4 & 8.46e-1 & 18 & 4830 & 5.51e-11 & 9.98e-11 & 0:00:22 & 0:07:40 \\
     \midrule
     \multirow{3}{*}{1e-5} & 1e-5 & 5 & 1.54e-1 & 42 & 2441 & 9.57e-11 & 9.96e-11 & 0:00:16 & 0:03:35 \\
     & 5e-6 & 4 & 6.92e-1 & 100 & 6460 & 0.00e+00 & 9.99e-11 & 0:00:31 & 0:09:20 \\
     & 1e-6 & 4 & 8.46e-1 & 100 & 9903 & 0.00e+00 & 9.54e-11 & 0:00:38 & 0:14:22 \\
    \hline
    \end{tabular}
  \label{table:revised_COVID_efficiency}
\end{table}

\begin{table}[H]
\centering
\caption{Time comparison of PPDNA, ADMM, and Nesterov algorithm with penalty parameters in \eqref{eq:set1} on COVID-19 data.
Here $\hat{r}$ represents the rank of the estimated matrix coefficient $B^*$ by PPDNA, and $R_{\rm obj}$ is given by  \eqref{eq:Robj}.
}
{
\setlength{\tabcolsep}{3pt}
\begin{tabular}{ccccccccccc}
\hline
& & \multicolumn{3}{c}{Iterations} & \multicolumn{3}{c}{$R_{\rm obj}$} & \multicolumn{3}{c}{Time}  \\
\cmidrule(r){3-5}\cmidrule(r){6-8}\cmidrule(r){9-11}
$\alpha$ & $\hat{r}$ & P & A & N & P & A & N & P & A & N \\
\midrule
1e-1 & 3 &  10 & 3188 & 20080 & 1.30e-11 & 9.90e-11 & 1.00e-10 & 0:00:18 & 0:09:32 & 0:05:14\\
1e-2 & 4 &  12 & 1783 & 228555 & 2.19e-11 & 9.95e-11 & 6.97e-07 & 0:00:24 & 0:04:33 & 1:00:00\\
1e-3 & 4 &  15 & 1750 & 218431 & 5.95e-11 & 9.95e-11 & 5.27e-03 & 0:00:28 & 0:03:21 & 1:00:00\\
1e-4 & 4 &  27 & 5981 & 229822 & 9.46e-11 & 9.99e-11 & 1.06e-02 & 0:00:26 & 0:09:21 & 1:00:00\\
1e-5 & 4 & 97 & 14030 & 2991 & 6.07e-11 & 9.79e-11 & 4.30e-03 & 0:00:45 & 0:19:17 & 0:00:51\\
\hline
\end{tabular}
}
\label{table:updated_synthetic_efficiency_comparison}
\end{table}

\section{Conclusion and Discussion}\label{sec:conclusion}
In this paper, we proposed a class of regularized regression models that can incorporate the matrix and vector predictors simultaneously with  general convex penalty functions. The $n$-consistency and $\sqrt{n}$-consistency of the estimator obtained from the optimization model were established when nuclear norm and lasso norm penalties are employed.  We proposed a highly efficient dual semismooth Newton method based preconditioned proximal point algorithm for solving the general model, which fully exploits the second-order information. Extensive numerical experiments were conducted to demonstrate the practical and general applicability of our proposed algorithm, as well as its efficiency {and robustness} compared to the state-of-the-art algorithms for solving regularized regression models. {While this paper primarily focuses on linear regression problems, our algorithm can  handle other problems, including logistic regression and Poisson regression problems.
Expanding our consistency analysis for other loss and penalty functions remains an open question, as we currently address a special case in Section~\ref{sec:consistency}.
Additionally, when dealing with possibly non-convex loss and penalty functions, we may need to  combine our algorithmic framework with difference-of-convex (d.c.) tools and majorization techniques to design d.c. type algorithms.
}


\appendix
\section{Proof of Theorem~\ref{thm: n-consistency}}\label{appendix: sec1}
\textbf{Proof of Theorem~\ref{thm: n-consistency}}
For any given $(U,\beta) \in \mathbb{R}^{m\times q}\times \mathbb{R}^p$, we have that
\begin{align*}
&Z_n(U,\beta) - Z(U,\beta)\\
=& \frac{1}{n}\sum_{i=1}^n (y_i\!-\!\langle X_i,U\rangle \!-\! \langle z_i,\beta\rangle)^2 +  \frac{\rho_n}{n}\|U\|_* + \frac{\lambda_n}{n}\|\beta\|_1 -{\rm vec}(U\!- \!B)^{\intercal} C {\rm vec}(U\!-\! B) \\
& - 2 {\rm vec}(U- B)^{\intercal}  H (\beta-\gamma)
- (\beta-\gamma)^{\intercal} D (\beta-\gamma)
- \rho_0\|U\|_* - \lambda_0\|\beta\|_1\\
=& \frac{1}{n}\sum_{i=1}^n (\varepsilon_i-\langle X_i,U-B\rangle - \langle z_i,\beta-\gamma\rangle)^2
- {\rm vec}(U- B)^{\intercal} C {\rm vec}(U- B) \\
& \!-\! 2 {\rm vec}(U\!-\! B)^{\intercal}  H (\beta\!-\!\gamma) \!-\! (\beta\!-\!\gamma)^{\intercal} D (\beta\!-\!\gamma)\!+\! \left(\frac{\rho_n}{n}\! -\!\rho_0\right)\|U\|_* \!+\! \left(\frac{\lambda_n}{n}\!- \!\lambda_0 \right)\|\beta\|_1\\
=& \frac{1}{n}\sum_{i=1}^n \varepsilon_i^2 + \frac{1}{n}\sum_{i=1}^n \left( \langle X_i,U-B\rangle\right)^2 + \frac{1}{n}\sum_{i=1}^n \left( \langle z_i,\beta-\gamma\rangle \right)^2 - \frac{2}{n}\sum_{i=1}^n  \varepsilon_i \langle X_i,U-B\rangle \\
&- \frac{2}{n}\sum_{i=1}^n  \varepsilon_i \langle z_i,\beta-\gamma\rangle + \frac{2}{n}\sum_{i=1}^n  \langle X_i,U\!-\!B\rangle\langle z_i,\beta - \gamma\rangle-{\rm vec}(U\!-\! B)^{\intercal} C {\rm vec}(U\!-\! B) \\
&  \!-\! 2 {\rm vec}(U\!-\! B)^{\intercal}  H (\beta\!-\!\gamma)\! -\! (\beta\!-\!\gamma)^{\intercal} D (\beta\!-\!\gamma)\!+\! \left(\frac{\rho_n}{n} \!-\!\rho_0\right)\|U\|_* \!+\! \left(\frac{\lambda_n}{n}\!-\! \lambda_0 \right)\|\beta\|_1 \\
 =& \frac{1}{n}\sum_{i=1}^n \varepsilon_i^2 + {\rm vec}(U- B)^{\intercal} (C_n - C) {\rm vec}(U- B) + (\beta-\gamma)^{\intercal} (D_n-D) (\beta-\gamma) \\
 &+ 2  {\rm vec}(U- B)^{\intercal}  (H_n-H) (\beta-\gamma) -\frac{2}{n}\sum_{i=1}^n  \varepsilon_i \left( \langle X_i,U-B\rangle +  \langle z_i,\beta-\gamma\rangle \right) \\
&  + \left(\frac{\rho_n}{n} -\rho_0\right)\|U\|_* + \left(\frac{\lambda_n}{n}- \lambda_0 \right)\|\beta\|_1.
\end{align*}
By the weak law of large numbers, we have that $\frac{1}{n} \sum_{i=1}^n \varepsilon_i^2 \to \mathbb{E}[\varepsilon_i^2] = \sigma^2 $ in probability as $n\rightarrow \infty$.

Moreover, it can be seen that
\begin{align*}
\mathbb{E}\!\left[ \frac{1}{n}\!\sum_{i=1}^n \! \varepsilon_i \left( \langle X_i,U\!-\!B\rangle +  \langle z_i,\beta\!-\!\gamma\rangle \right) \right] \!=\! \frac{1}{n}\sum_{i=1}^n \mathbb{E}[\varepsilon_i]\left( \langle X_i,U\!-\!B\rangle +  \langle z_i,\beta\!-\!\gamma\rangle \right) \!=\!0,
\end{align*}
\begin{align*}
& {\rm Var}\left[ \frac{1}{n}\sum_{i=1}^n  \varepsilon_i \left( \langle X_i,U-B\rangle +  \langle z_i,\beta-\gamma\rangle \right) \right] = \mathbb{E}\left[ \frac{1}{n^2}\sum_{i=1}^n \varepsilon_i^2 \left( \langle X_i,U-B\rangle+ \langle z_i,\beta-\gamma\rangle\right)^2 \right]\\
=&\frac{1}{n^2}\sum_{i=1}^n \mathbb{E}[\varepsilon_i^2] \left( \langle X_i,U-B\rangle+ \langle z_i,\beta-\gamma\rangle\right)^2= \frac{\sigma^2}{n}\begin{pmatrix}
{\rm vec}(U- B)\\
\beta-\gamma
\end{pmatrix}^{\intercal}
S_n
\begin{pmatrix}
{\rm vec}(U- B)\\
\beta-\gamma
\end{pmatrix}
:= \frac{\sigma^2}{n} \hat{\sigma}_n^2.
\end{align*}
Here, $\hat{\sigma}_n^2 > 0$ for large $n$ because $\hat{\sigma}_n^2$ converges to a positive number, as long as $(U,\beta) \neq (B,\gamma)$. In cases where they are equal, $\frac{1}{n}\sum_{i=1}^n  \varepsilon_i \left( \langle X_i,U\!\!-\!\!B\rangle +  \langle z_i,\beta\!\!-\!\!\gamma\rangle \right)$ is zero.
By Chebyshev's inequality, for any given $\epsilon > 0$, we have that
\begin{align*}
\mathbb{P}\left( \Big| \frac{1}{n}\sum_{i=1}^n  \varepsilon_i \left( \langle X_i,U-B\rangle +  \langle z_i,\beta-\gamma\rangle \right)  \Big| \geq \epsilon\right) \leq \frac{\sigma^2 \hat{\sigma}_n^2}{\epsilon^2 n},
\end{align*}
which further implies that
$
\lim_{n\to  \infty} \mathbb{P}\left( \Big| \frac{1}{n}\sum_{i=1}^n  \varepsilon_i \left( \langle X_i,U\!\!-\!\!B\rangle \!+\!  \langle z_i,\beta\!-\!\gamma\rangle \right) \Big| \!\!\geq\! \!\epsilon\right) \!=\!0.
$
Therefore, we have proved that
$
Z_n(U,\beta) - Z(U,\beta)\rightarrow \sigma^2
$
in probability as $n\rightarrow \infty$. According to \citet{pollard1991asymptotics}, we further have
\begin{align}
\sup_{(U,\beta)\in K} \ \Big| Z_n(U,\beta) - Z(U,\beta) - \sigma^2 \Big| \rightarrow 0 \label{eq: Z_converge_on_k}
\end{align}
in probability as $n\rightarrow \infty$, for any compact set $K\subset \mathbb{R}^{m\times q}\times \mathbb{R}^p$.

Next, we denote
$
Z_n^{(0)}(U,\beta):=\frac{1}{n}\sum_{i=1}^n (y_i-\langle X_i,U\rangle - \langle z_i,\beta\rangle)^2,
$
and assume that it is minimized at $(\widehat{B}_n^{(0)},\hat{\gamma}_n^{(0)})$.

Then we have
\begin{align*}
&\frac{1}{n}\sum_{i=1}^n \left( y_i-\langle X_i,\widehat{B}_n^{(0)}\rangle - \langle z_i,\hat{\gamma}_n^{(0)}\rangle \right)^2 \leq \frac{1}{n}\sum_{i=1}^n (y_i-\langle X_i,B\rangle - \langle z_i,\gamma\rangle)^2,\\
&\frac{1}{n}\sum_{i=1}^n \left(\varepsilon_i-\langle X_i,\widehat{B}_n^{(0)}-B\rangle - \langle z_i,\hat{\gamma}_n^{(0)}-\gamma\rangle \right)^2 \leq \frac{1}{n}\sum_{i=1}^n \varepsilon_i^2.
\end{align*}
Then we can see that
\begin{align*}
& \begin{pmatrix}
{\rm vec} (\widehat{B}_n^{(0)} - B) \\ \hat{\gamma}_n^{(0)}-\gamma
\end{pmatrix}^{\intercal}
S_n
\begin{pmatrix}
{\rm vec} (\widehat{B}_n^{(0)}- B) \\ \hat{\gamma}_n^{(0)}-\gamma
\end{pmatrix} \\
=& {\rm vec} (\widehat{B}_n^{(0)} \!\!- \!\!B)^{\intercal} \! C_n {\rm vec} (\widehat{B}_n^{(0)} \!\!-\!\! B) \!+\! (\hat{\gamma}_n^{(0)}\!\!-\!\!\gamma)^{\intercal} \!D_n (\hat{\gamma}_n^{(0)}\!\!-\!\!\gamma) \!+\! 2{\rm vec} (\widehat{B}_n^{(0)} \!\!- \!\!B)^{\intercal} \! H_n(\hat{\gamma}_n^{(0)}\!\!-\!\!\gamma)\\
=&\frac{1}{n}\!\!\sum_{i=1}^n \!\!\left(\langle X_i,\widehat{B}_n^{(0)}\!\!-\!\!B\rangle \!+\! \langle z_i,\hat{\gamma}_n^{(0)}\!\!-\!\!\gamma\rangle \right)^2
\!\!=\!\! \frac{1}{n}\!\!\sum_{i=1}^n \!\!\left(\langle X_i,\widehat{B}_n^{(0)}\!\!-\!\!B\rangle\! +\! \langle z_i,\hat{\gamma}_n^{(0)}\!\!-\!\!\gamma\rangle \!-\! \varepsilon_i\!+\! \varepsilon_i \right)^2\\
\leq &\frac{2}{n}\sum_{i=1}^n \left(\langle X_i,\widehat{B}_n^{(0)}-B\rangle + \langle z_i,\hat{\gamma}_n^{(0)}-\gamma\rangle - \varepsilon_i \right)^2 + \frac{2}{n}\sum_{i=1}^n\varepsilon_i^2\leq \frac{4}{n}\sum_{i=1}^n\varepsilon_i^2.
\end{align*}
By Assumption~\ref{assu: regularity} and the positive definiteness of $S$, there exists $\eta>0$ such that $S_n\succeq  \eta I_{mq+p}$ for  large $n$. Therefore,
\begin{align}
\eta \left( \| \widehat{B}_n^{(0)} -  B\|_F^2+ \|\hat{\gamma}_n^{(0)}-\gamma\|^2 \right)\leq \frac{4}{n}\sum_{i=1}^n\varepsilon_i^2.\label{eq: Bgamma_bound}
\end{align}
Since $\sum_{i=1}^n (\varepsilon_i/\sigma)^2$ follows the chi-squared distribution with $n$ degrees of freedom, we have
$ \mathbb{E}\left[ \frac{1}{n} \sum_{i=1}^n \varepsilon_i^2 \right]= \sigma^2$ and $ {\rm Var} \left[ \frac{1}{n} \sum_{i=1}^n \varepsilon_i^2 \right]= \frac{2\sigma^4}{n}$.
Then from \eqref{eq: Bgamma_bound} and Chebyshev’s inequality,
\begin{equation*}
\begin{aligned}
& \mathbb{P}\left( \| \widehat{B}_n^{(0)} -  B\|_F^2+ \|\hat{\gamma}_n^{(0)}-\gamma\|^2 > \frac{4}{\eta} \left(\sigma^2+1 \right)\right)\leq
\mathbb{P}\left( \frac{1}{n} \sum_{i=1}^n \varepsilon_i^2 > \sigma^2+1 \right) \\
&\qquad \leq  \mathbb{P}\left( \Big| \frac{1}{n} \sum_{i=1}^n \varepsilon_i^2-\sigma^2\Big| \geq 1\right) \leq \frac{2\sigma^4}{n},
\end{aligned}
\end{equation*}
which further implies that
\begin{align}
\lim_{n\to\infty}\mathbb{P}\left( \| \widehat{B}_n^{(0)} -  B\|_F^2+ \|\hat{\gamma}_n^{(0)}-\gamma\|^2 > \frac{4}{\eta} \left(\sigma^2+1 \right)\right)=0. \label{eq: limb0gamma0}
\end{align}
This {means} that $\{(\widehat{B}_n^{(0)},\hat{\gamma}_n^{(0)})\}$ is bounded in probability, i.e., $(\widehat{B}_n^{(0)},\hat{\gamma}_n^{(0)}) = O_P(1)$.
Furthermore, due to the fact that $Z_n(\widehat{B}_n,\hat{\gamma}_n)\leq Z_n(\widehat{B}_n^{(0)},\hat{\gamma}_n^{(0)}) $ and $Z_n^{(0)}(\widehat{B}_n^{(0)},\hat{\gamma}_n^{(0)})\leq Z_n^{(0)}(\widehat{B}_n,\hat{\gamma}_n)$, we have
\begin{align*}
&\quad \frac{1}{n}\sum_{i=1}^n \left(y_i-\langle X_i,\widehat{B}_n\rangle - \langle z_i,\hat{\gamma}_n\rangle \right)^2 +  \frac{\rho_n}{n}\|\widehat{B}_n\|_* + \frac{\lambda_n}{n}\|\hat{\gamma}_n\|_1 \\
&\leq \frac{1}{n}\sum_{i=1}^n \left(y_i-\langle X_i,\widehat{B}_n^{(0)}\rangle - \langle z_i,\hat{\gamma}_n^{(0)}\rangle \right)^2 +  \frac{\rho_n}{n}\|\widehat{B}_n^{(0)}\|_* + \frac{\lambda_n}{n}\|\hat{\gamma}_n^{(0)}\|_1\\
&\leq \frac{1}{n}\sum_{i=1}^n \left(y_i-\langle X_i,\widehat{B}_n\rangle - \langle z_i,\hat{\gamma}_n\rangle \right)^2 +  \frac{\rho_n}{n}\|\widehat{B}_n^{(0)}\|_* + \frac{\lambda_n}{n}\|\hat{\gamma}_n^{(0)}\|_1,
\end{align*}
which means
\begin{align}
\frac{\rho_n}{n}\|\widehat{B}_n\|_* + \frac{\lambda_n}{n}\|\hat{\gamma}_n\|_1 \leq \frac{\rho_n}{n}\|\widehat{B}_n^{(0)}\|_* + \frac{\lambda_n}{n}\|\hat{\gamma}_n^{(0)}\|_1. \label{eq: bgamma_b0gamma0}
\end{align}
Therefore, the sequence $\{(\widehat{B}_n,\hat{\gamma}_n)\}$ is also bounded in probability: $ (\widehat{B}_n,\hat{\gamma}_n) = O_P(1)$.

Lastly, for any compact set $K\subset \mathbb{R}^{m\times q}\times \mathbb{R}^p$, we have
\begin{align*}
\sup_{(U,\beta)\in K} \ \Big| Z_n(U,\beta) - Z(U,\beta) - \sigma^2 \Big| \geq \Big| \inf_{(U,\beta)\in K} Z_n(U,\beta) - \inf_{(U,\beta)\in K} Z(U,\beta) -\sigma^2 \Big|,
\end{align*}
due to the following two inequalities:
\begin{align*}
&\inf_{(U,\beta)\in K} \!\! \! Z_n(U,\beta) \!- \!\!\!\! \inf_{(U,\beta)\in K} \!\! Z(U,\beta) \!-\!\sigma^2 \!=\! -\!\!\!\sup_{(U,\beta)\in K} \! (-Z_n(U,\beta)) \!+\!\!\!\! \sup_{(U,\beta)\in K} \!\!\!(-Z(U,\beta)\!-\!\sigma^2)\\
&\qquad \leq \sup_{(U,\beta)\in K} (Z_n(U,\beta) -Z(U,\beta)-\sigma^2)\leq \sup_{(U,\beta)\in K} \Big| Z_n(U,\beta) -Z(U,\beta)-\sigma^2\Big|, \\
&-\!\!\!\!\inf_{(U,\beta)\in K} \!\!\!Z_n(U,\beta) \!+ \!\!\!\!\inf_{(U,\beta)\in K} Z(U,\beta) \!+\!\sigma^2 \!=\!\!\! \sup_{(U,\beta)\in K} \!\!\!(-Z_n(U,\beta)) \!-\!\!\! \sup_{(U,\beta)\in K} \!\!\!(-Z(U,\beta)\!-\!\sigma^2)\\
&\qquad \leq \sup_{(U,\beta)\in K} (-Z_n(U,\beta) +Z(U,\beta)+\sigma^2)\leq \sup_{(U,\beta)\in K} \Big| Z_n(U,\beta) -Z(U,\beta)-\sigma^2\Big|.
\end{align*}
By \eqref{eq: limb0gamma0} and \eqref{eq: bgamma_b0gamma0}, there exists $M>0$  such that
\begin{equation}\label{eq: bound2}
\lim_{n\to \infty} \mathbb{P}\left( \|\widehat{B}_n\|^2 + \|\hat{\gamma}_n\|^2>M\right) =0.
\end{equation}
Denote $K_n:=\{(\widehat{B}_n,\hat{\gamma}_n),(\widehat{B},\hat{\gamma})\}$ and $K_0:=\{(B,\gamma) \in \mathbb{R}^{m\times q}\times \mathbb{R}^p\mid  \|B\|^2 + \|\gamma\|^2 \leq M\} \cup \{(\widehat{B},\hat{\gamma})\}$. Then we have that
\begin{align*}
&|Z(\widehat{B}_n,\! \hat{\gamma}_n)\! - \!Z(\widehat{B},\!\hat{\gamma})|
\!\leq \! |Z_n(\widehat{B}_n,\! \hat{\gamma}_n) \!- \!Z(\widehat{B}_n,\! \hat{\gamma}_n)\! -\! \sigma^2 |\! +\! |Z_n(\widehat{B}_n,\hat{\gamma}_n) \!- \!Z(\widehat{B},\hat{\gamma}) \!-\! \sigma^2 |\\
&\quad \leq \sup_{(U,\beta)\in K_n} \!\!|Z_n(U,\beta)\!- \!Z(U,\beta) \!-\! \sigma^2|   \!+\!|\inf_{(U,\beta) \in K_n} \!\!Z_n(U,\beta) \!-\! \inf_{(U,\beta) \in K_n} \!\!Z(U,\beta) \!-\! \sigma^2|\\
&\quad \leq  2\sup_{(U,\beta)\in K_n} |Z_n(U,\beta) - Z(U,\beta) - \sigma^2|.
\end{align*}
Therefore, for any $\epsilon > 0$, we can see that
\begin{align*}
&\mathbb{P}\left( |Z(\widehat{B}_n,\hat{\gamma}_n) - Z(\widehat{B},\hat{\gamma})|\geq \epsilon\right)\\
=&\mathbb{P}\left( |Z(\widehat{B}_n,\hat{\gamma}_n) - Z(\widehat{B},\hat{\gamma})|\geq \epsilon, \|\widehat{B}_n\|^2 + \|\hat{\gamma}_n\|^2\leq M \right)\\
&+ \mathbb{P}\left( |Z(\widehat{B}_n,\hat{\gamma}_n) - Z(\widehat{B},\hat{\gamma})|\geq \epsilon, \|\widehat{B}_n\|^2 + \|\hat{\gamma}_n\|^2> M \right)\\
\leq&  \mathbb{P}\left( \|\widehat{B}_n\|^2 + \|\hat{\gamma}_n\|^2\leq  M \right) \mathbb{P}\left( |Z(\widehat{B}_n,\hat{\gamma}_n) - Z(\widehat{B},\hat{\gamma})|\geq \epsilon \middle\vert \|\widehat{B}_n\|^2 + \|\hat{\gamma}_n\|^2\leq M \right)\\
& + \mathbb{P}\left( \|\widehat{B}_n\|^2 + \|\hat{\gamma}_n\|^2> M \right)\\
\leq& \mathbb{P}\left( \!\!\|\widehat{B}_n\|^2 \!\!+\!\! \|\hat{\gamma}_n\|^2\!\!\leq\!  \!M \!\!\right) \mathbb{P}\left( \!\!\sup_{(U,\beta)\in K_n} \!\!\!\!|Z_n(U,\beta) \!-\! Z(U,\beta) \!-\! \sigma^2|\!\geq\! \frac{\epsilon}{2} \middle\vert \|\widehat{B}_n\|^2 \!\!+ \!\!\|\hat{\gamma}_n\|^2\!\leq \! M  \!\!\right) \\
&+ \mathbb{P}\left( \|\widehat{B}_n\|^2 + \|\hat{\gamma}_n\|^2> M \right) \\
\leq& \mathbb{P}\left( \!\!\|\widehat{B}_n\|^2 \!\!+\!\! \|\hat{\gamma}_n\|^2\!\!\leq\!  \!M \!\!\right) \mathbb{P}\left( \!\!\sup_{(U,\beta)\in K_0} \!\!\!\!|Z_n(U,\beta) \!-\! Z(U,\beta) \!-\! \sigma^2|\!\geq\! \frac{\epsilon}{2} \middle\vert \|\widehat{B}_n\|^2 \!\!+ \!\!\|\hat{\gamma}_n\|^2\!\leq \! M  \!\!\right) \\
&+ \mathbb{P}\left( \|\widehat{B}_n\|^2 + \|\hat{\gamma}_n\|^2> M \right),\\
=& \mathbb{P}\left( \sup_{(U,\beta)\in K_0} |Z_n(U,\beta) - Z(U,\beta) - \sigma^2|\geq \epsilon/2 \right) + \mathbb{P}\left( \|\widehat{B}_n\|^2 + \|\hat{\gamma}_n\|^2> M \right),
\end{align*}
where the last inequality holds since $K_n \subset K_0$ given that $\|\widehat{B}_n\|^2 + \|\hat{\gamma}_n\|^2\leq M$. Thus,  with \eqref{eq: Z_converge_on_k} and \eqref{eq: bound2}, we have
$
\lim_{n\rightarrow \infty} \mathbb{P}\left( |Z(\widehat{B}_n,\hat{\gamma}_n) - Z(\widehat{B},\hat{\gamma})|\geq \epsilon\right) =0,
$
which means that $Z(\widehat{B}_n,\hat{\gamma}_n) \rightarrow Z(\widehat{B},\hat{\gamma})$ in probability as $n\rightarrow \infty$.
The function $Z(\cdot,\cdot)$ is strongly convex under the assumption that $S\succ 0$, which implies that $(\widehat{B}_n,\hat{\gamma}_n)\rightarrow (\widehat{B},\hat{\gamma})$ in probability as $n\rightarrow \infty$. This completes the proof.
\hfill $\blacksquare$

\section{Preliminaries on Non-smooth Analysis}\label{appendix: sec2}
In this section, we give a brief introduction to some basic concepts and some properties, which is critical for developing the semismooth Newton (SSN) method for solving the subsubproblems \eqref{eq: sub_ssn} in {the} preconditioned proximal point algorithm (PPA).

Let $\mathcal{F}:\,\mathbb{R}^n\to \mathbb{R}^m$ be a locally Lipschitz continuous function. Then by Rademacher's theorem, $\mathcal{F}$ is (Fr{\'e}chet) differentiable almost everywhere. Let $D_{\mathcal{F}}$ be the set of points in $\mathbb{R}^n$ where $ \mathcal{F}$ is differentiable and $\mathcal{F}'(x)$ be the Jacobian of $\mathcal{F}$ at $x\in D_{\mathcal{F}}$. The Bouligand subdifferential (B-subdifferential) of $\mathcal{F}$ at any $x\in\mathbb{R}^n$ is defined as
\[ 
\partial_B \mathcal{F}(x)=\left\{\underset{x^k\to x}{\lim}\,\mathcal{F}'(x^k)\,|\,x^k \in D_{\mathcal{F}} \right\},
\]
and the Clarke generalized Jacobian  of $\mathcal{F}$ at  $x\in\mathbb{R}^n$ is defined as {the convex hull of $\partial_B \mathcal{F}(x)$:}
$\partial \mathcal{F}(x)={\rm conv}\{\partial_B \mathcal{F}(x)\}$.
The following definition of  ``semismoothness with respect to a multifunction'', which is mainly adopted from
\citet{qi1993nonsmooth}, \citet{sun2002semismooth}, \citet{mifflin1977semismooth}, \citet{kummer1988newton}, \citet{li2018fused},  is important in  the SSN method.
\begin{definition}
Let $\mathcal{O}\subseteq\mathbb{R}^n$ be an open set, $\mathcal{K}:\mathcal{O} \rightrightarrows \mathbb{R}^{m\times n}$ be a nonempty, compact valued,  upper semicontinuous multifunction, and $\mathcal{F}:\,\mathcal{O} \rightarrow \mathbb{R}^m$ be a locally Lipschitz continuous function. $\mathcal{F}$ is said to be semismooth at $x\in \mathcal{O}$ with respect to the multifunction $\mathcal{K}$ if $\mathcal{F}$ is directionally differentiable at $x$ and for any $V\in\mathcal{K}(x+\Delta x)$ with $\Delta x\rightarrow 0$,
$$
\mathcal{F}(x+\Delta x)-\mathcal{F}(x)-V \Delta x= o(\|\Delta x\|).
$$
Let $\alpha$ be a positive constant. $\mathcal{F}$ is said to be $\alpha$-order (strongly, if $\alpha=1$) semismooth at $x\in\mathcal{O}$ with
respect to $\mathcal{K}$ if $\mathcal{F}$ is directionally differentiable at $x$ and for any $V\in\mathcal{K}(x+\Delta x)$ with $\Delta x\rightarrow 0$,
$$
\mathcal{F}(x+\Delta x)-\mathcal{F}(x)-V \Delta x= O(\|\Delta x\|^{1+\alpha}).
$$
$\mathcal{F}$ is said to be a semismooth (respectively, $\alpha$-order semismooth, strongly semismooth) function on $\mathcal{O}$ with respect to $\mathcal{K}$ if it is semismooth (respectively, $\alpha$-order semismooth, strongly semismooth) everywhere in $\mathcal{O}$  with respect to $\mathcal{K}$.
\end{definition}

\section{Implementation Details}
\subsection{ADMM for Solving the Dual Problem}\label{sec:admm}
In this subsection, we briefly introduce the well known alternating direction method of multipliers (ADMM) for solving \eqref{eq: p}.
With the auxiliary variable {$s\in \mathbb{R}^n$}, the primal problem \eqref{eq: p} is equivalent to
\begin{align} \label{eq: p2}
\min_{B\in \mathbb{R}^{m\times q}, \gamma \in\mathbb{R}^p, s\in \mathbb{R}^n}\ \Big\{
h(s) + \phi(B) + \psi(\gamma) \ \Big| \ X {\rm vec}(B) + Z\gamma -s = 0\Big\}.
\end{align}
It is not difficult to derive the Lagrangian dual problem of \eqref{eq: p2}, given by
\begin{align*}
\min_{\xi\in \mathbb{R}^n} \ \Big\{ h^*(\xi) + \phi^*(-{\rm mat}(X^{\intercal} \xi)) + \psi^*(-Z^{\intercal}\xi)\Big\}.
\end{align*}
And this dual problem can be further written as
\begin{align} \label{eq: d}
\min_{w,\xi\in \mathbb{R}^n,U\in \mathbb{R}^{m\times q},\beta\in \mathbb{R}^p} \! \!\!\Big\{\!
h^*(w) \!+ \! \phi^*(U) \! + \! q^*(\beta) \Big| {\rm mat}(\!X^{\intercal} \xi\!) \!+\! U \!\!=\!\! 0, Z^{\intercal}\xi\!+\!\beta\!\!=\!\!0, w\!-\!\xi\!\!=\!\!0
\!\Big\}.
\end{align}
For $\sigma>0$, the augmented Lagrangian function associated with the dual problem \eqref{eq: d} is
\begin{align*}
&L_{\sigma}(w,\xi,U,\beta;B,\gamma,s) \\
=\;& h^*(w) + \phi^*(U) + \psi^*(\beta)-\langle B,{\rm mat}(X^{\intercal} \xi) + U\rangle - \langle \gamma, Z^{\intercal}\xi+\beta\rangle - \langle s,w-\xi\rangle \\
& + \frac{\sigma}{2}\|{\rm mat}(X^{\intercal} \xi) + U\|^2 +\frac{\sigma}{2}\|Z^{\intercal}\xi+\beta\|^2 + \frac{\sigma}{2}\|w-\xi\|^2\\
=\;&  h^*(w) + \phi^*(U) + \psi^*(\beta)  + \frac{\sigma}{2}\|{\rm mat}(X^{\intercal} \xi) + U -\frac{1}{\sigma} B\|^2 +\frac{\sigma}{2}\|Z^{\intercal}\xi+\beta-\frac{1}{\sigma} \gamma\|^2 \\
& + \frac{\sigma}{2}\|w-\xi-\frac{1}{\sigma} s\|^2 - \frac{1}{2\sigma}\|B\|^2- \frac{1}{2\sigma}\|\gamma\|^2- \frac{1}{2\sigma}\|s\|^2.
\end{align*}
And the ADMM for solving \eqref{eq: d} has the iterations
\begin{align}
& \xi^{k+1} = \underset{\xi }{\arg\min} \ L_{\sigma}(w^k,\xi,U^k,\beta^k;B^k,\gamma^k,s^k), \label{admm-sub1}\\
& (w^{k+1},U^{k+1},\beta^{k+1}) =\underset{w,U,\beta }{\arg\min} \ L_{\sigma}(w,\xi^{k+1},U,\beta;B^k,\gamma^k,s^k), \label{admm-sub2}\\
& B^{k+1} = B^k -\tau\sigma ({\rm mat}(X^{\intercal} \xi^{k+1}) + U^{k+1}), \nonumber\\
& \gamma^{k+1} = \gamma^k  -\tau\sigma(Z^{\intercal}\xi^{k+1}+\beta^{k+1}), \nonumber\\
& s^{k+1} = s^k -\tau\sigma(w^{k+1}-\xi^{k+1}),\nonumber
\end{align}
where ${\tau}\in(0,(1+\sqrt{5})/2)$ is the step size.
Note that the  convergence result of ADMM is well documented
\citep{glowinski1975approximation,Gabay1976dual,Glowinski1984numerical}.

By setting the gradient with respect to $\xi$ to be zero, we can deduce that the subproblem \eqref{admm-sub1} is equivalent to solving a linear system
$
\xi^{k+1} = (I_n + X  X^{\intercal} + Z Z^{\intercal})^{-1} ( X {\rm vec} (\frac{1}{\sigma}B^k-U^k) + Z(\frac{1}{\sigma}\gamma^k-\beta^k) + w^k-\frac{1}{\sigma}s^k).
$
With the definition of the proximal mapping,  problem \eqref{admm-sub2} can be updated as follows:
\begin{align*}
& w^{k+1}  = \xi^{k+1}+\frac{1}{\sigma}s^k - \frac{1}{\sigma}{\rm Prox}_{\sigma h} (\sigma \xi^{k+1} +s^k),\\
& U^{k+1}  =  \frac{1}{\sigma}B^k- {\rm mat}(X^{\intercal}\xi^{k+1}) - \frac{1}{\sigma}{\rm Prox}_{\sigma \phi}(B^k-\sigma {\rm mat}(X^{\intercal}\xi^{k+1})),\\
& \beta^{k+1} =  \frac{1}{\sigma} \gamma^k-Z^{\intercal} \xi^{k+1}-\frac{1}{\sigma} {\rm Prox}_{\sigma \psi}(\gamma^k - \sigma Z^{\intercal} \xi^{k+1}).
\end{align*}

The KKT conditions  associated with \eqref{eq: p2} and \eqref{eq: d} are
\begin{align*}
& X {\rm vec}(B) + Z\gamma -s = 0,  \quad
 {\rm mat}(X^{\intercal} \xi) + U = 0,
\quad Z^{\intercal}\xi+\beta=0,
\quad w-\xi=0,  \\
&  B = {\rm Prox}_{\phi}(B+U),
\quad \gamma = {\rm Prox}_{\psi}(\gamma+\beta),
\quad  s = {\rm Prox}_h(s+w).
\end{align*}
Corresponding to the above KKT conditions, we define the KKT residual terms $R_p,R_d,R_c$:
\begin{equation}\label{kkt-res}
\begin{array}{l}
 R_p :=\frac{\|X {\rm vec}(B) + Z\gamma -s\|}{1+\|s\|},\;\; R_d := \max \left\{\frac{\| {\rm mat}(X^{\intercal}\xi) + U\|_F}{1+\|U\|_F}, \frac{\|Z^{\intercal}\xi +\beta\|}{1+\|\beta\|}, \frac{\|w - \xi\|}{ 1+ \|\xi\|} \right\},\\
 R_c := \max \left\{\frac{\|{\rm Prox}_{\phi}(B+U)-B\|_F}{1+\|B\|_F},\frac{\|{\rm Prox}_{\psi}(\gamma+\beta)-\gamma\|}{1+\|\gamma\|}, \frac{\|{\rm Prox}_h(s+w) - s\|}{ 1+ \|s\|} \right\},
\end{array}
\end{equation}
which represent the primal, dual infeasibilities and complementary condition, respectively.

{
\subsection{Discussions on parameter tuning strategies for $\lambda$ and $\lambda'$}
\label{appendix: lambda}
Setting $\lambda=\lambda'$ reflects a trade-off between the running time of model selection and prediction accuracy. Using distinct values for $\lambda$ and $\lambda'$ would potentially improve the estimation and prediction performance but would also increase the computational cost of model selection.

To investigate this further, we have conducted additional experiments to illustrate the impact of assigning different values to $\lambda$ and $\lambda'$ in model NFL. We follow the experimental setup outlined in Section~\ref{sec:2d}, where the true matrix coefficient exhibits a two-dimensional geometric shape, and the true vector coefficient follows a local constant structure, generated according to scheme (S2). The tuning parameters are set as:
\[\setlength{\abovedisplayskip}{2pt}
  \setlength{\belowdisplayskip}{2pt}
    \rho=\alpha_1\left\|{\rm mat}(X^{\intercal} y)\right\|_2,
    \qquad \lambda =  \alpha_2\left\|Z^{\intercal} y\right\|_{\infty},\qquad  \lambda' = \alpha'_2\left\|Z^{\intercal} y\right\|_{\infty},
\]
where each $\alpha_1,\alpha_2,\alpha'_2$ is selected from a large grid of values in the range of $10^{-3}$ to $1$ with 20 equally divided grid points on the {$\log_{10}$} scale.

The running time for model selection, the estimation error, and the prediction error are presented in Table \ref{Table_two_strategy},  comparing the performance of selecting $\lambda$ and $\lambda'$ independently with that of fixing $\lambda=\lambda'$. We can see from Table \ref{Table_two_strategy} that both estimation error and prediction accuracy have a clear improvement when $\lambda$ and $\lambda'$ are chosen independently. This comes, as expected, at the cost of significantly increased computational time for model selection.

\begin{table}[htbp]
\centering
\caption{Comparison of tuning parameter strategies on model NFL.  ``Fixed $\lambda = \lambda'$'' refers to the tuning parameter strategy where we fix $\lambda=\lambda'$, ``Indep. $\lambda \& \lambda'$'' represents the strategy where we select $\lambda$ and $\lambda'$ independently, and ``Optimal $(\rho,\lambda,\lambda')$'' denotes the parameters identified as optimal through the model selection process.}
\label{Table_two_strategy}
\begin{tabular}{clccccc}
\hline
\textbf{$B$} & Strategy & Optimal $(\rho,\lambda,\lambda')$ & RMSE-$y$ & Error-$B$ & Error-$\gamma$ & Time \\
\hline
\multirow{2}{*}{Square}
& Fixed $\lambda = \lambda'$ & (57.9, 3.04, 3.04) & $9.28$ & $0.11$ & $0.18$ & 0:08:33 \\
& Indep. $\lambda \& \lambda'$ & (57.9, 1.02, 9.03) & $6.44$ & $0.09$ & $0.10$ & 3:21:40 \\
\hline
\multirow{2}{*}{Triangle}
& Fixed $\lambda = \lambda'$ & (585, 35.1, 35.1) & $10.98$ & $0.13$ & $0.23$ & 0:07:38 \\
& Indep. $\lambda \& \lambda'$ & (283, 3.96, 50.5) & $9.12$ & $0.12$ & $0.16$ & 3:06:40 \\
\hline
\multirow{2}{*}{Circle}
& Fixed $\lambda = \lambda'$ & (244, 11.5, 11.5) & $9.82$ & $0.12$ & $0.21$ & 0:08:42 \\
& Indep. $\lambda \& \lambda'$ & (118, 1.86, 16.5) & $7.71$ & $0.10$ & $0.12$ & 3:28:20 \\
\hline
\multirow{2}{*}{Heart}
& Fixed $\lambda = \lambda'$ & (126, 5.57, 5.57) & $10.83$ & $0.13$ & $0.21$ & 0:08:28 \\
& Indep. $\lambda \& \lambda'$ & (541, 11.5, 70.9) & $8.94$ & $0.12$ & $0.15$ & 3:20:00 \\
\hline
\end{tabular}
\end{table}
}

In practice, we suggest considering this trade-off based on the specific application requirements. When computational resources are limited, setting $\lambda=\lambda'$ might offer a practical compromise. However, for  prediction accuracy is important and computational time is enough, tuning $\lambda$ and $\lambda'$ independently can provide better results.

\subsection{Two Dimensional Shapes}\label{sec:2d_appendix}
The true shapes of
$B\in\mathbb{R}^{64\times64}$ used in Section~\ref{sec:2d} are visualized in Figure \ref{fig:images}.

\begin{figure}[H]
\caption{The true shapes of
$B\in\mathbb{R}^{64\times64}$ used in Section~\ref{sec:2d}}
\centering
    \includegraphics[width=0.21\linewidth]{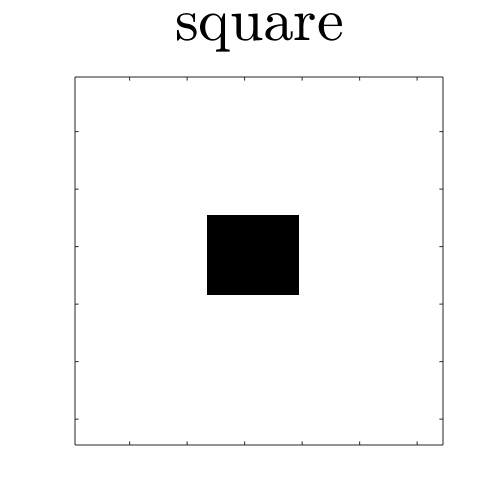}
    \includegraphics[width=0.21\linewidth]{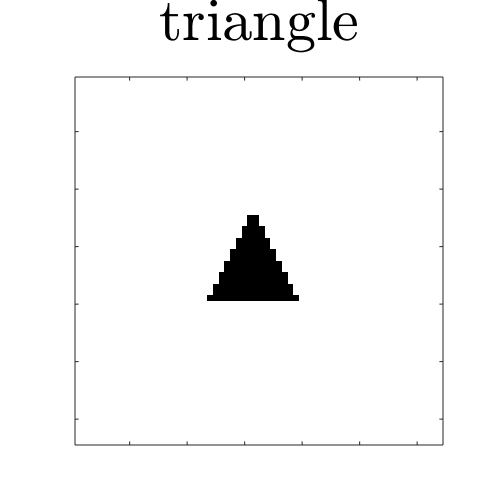}
    \includegraphics[width=0.21\linewidth]{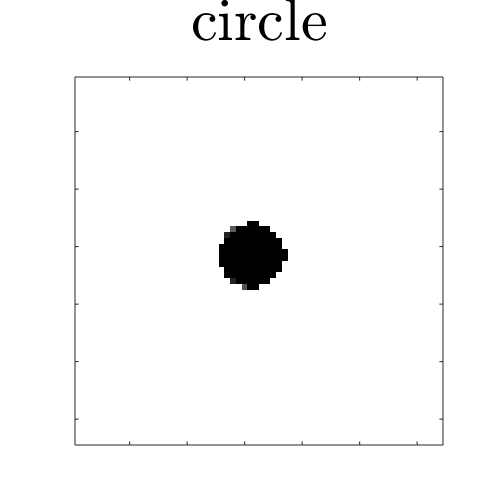}
    \includegraphics[width=0.21\linewidth]{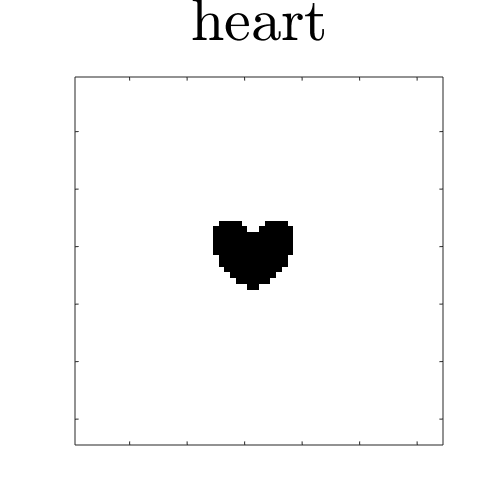}
\label{fig:images}
\end{figure}

\section*{Funding}
    The research of Meixia Lin was supported by the Ministry of Education, Singapore, under its Academic Research Fund Tier 2 grant call (MOE-T2EP20123-0013) and the Singapore University of Technology and Design under MOE Tier 1 Grant SKI 2021\_02\_08. The research of Yangjing Zhang was supported by the National Natural Science Foundation of China under grant number 12201617.

\bibliographystyle{abbrvnat}
\bibliography{references}     

\end{document}